\newcommand{\term}[1]{{\it#1}}
\newcommand{\kk}{\Bbbk}
\renewcommand{\kk}{k}
\newcommand{\NN}{\mathbb{N}}
\newcommand{\ZZ}{\mathbb{Z}}
\newcommand{\CC}{\mathbb{C}}
\newcommand{\unit}{\mathbbm{1}}
\newcommand{\cA}{\mathcal{A}}
\newcommand{\cC}{\mathcal{C}}
\newcommand{\cD}{\mathcal{D}}
\newcommand{\cE}{\mathcal{E}}
\newcommand{\cM}{\mathcal{M}}
\newcommand{\cP}{\mathcal{P}}
\newcommand{\cS}{\mathcal{S}}
\newcommand{\cT}{\mathcal{T}}
\newcommand{\cW}{\mathcal{W}}
\newcommand{\TL}{\mathcal{TL}}
\newcommand{\pW}{\mathcal{W}'}
\newcommand{\fS}{\mathfrak{S}}
\newcommand{\fB}{\mathfrak{B}}
\DeclareMathOperator{\Hom}{Hom}
\DeclareMathOperator{\End}{End}
\DeclareMathOperator{\Ind}{Ind}
\DeclareMathOperator{\Res}{Res}
\DeclareMathOperator{\id}{id}
\DeclareMathOperator{\Id}{Id}
\DeclareMathOperator{\Tr}{Tr}
\DeclareMathOperator{\Add}{\mathcal{A}\mathit{dd}}
\DeclareMathOperator{\Kar}{\mathcal{K}\mathit{ar}}
\DeclareMathOperator{\Ps}{\mathcal{P}\mathit{s}}
\DeclareMathOperator{\HOM}{\mathcal{H}\mathit{om}}
\newcommand{\Triv}{{\mathcal{T}\mathit{riv}}}
\newcommand{\Cat}{{\mathbf{Cat}}}
\newcommand{\PsCat}{{\mathbf{PsCat}}}
\newcommand{\tCat}{{{\otimes}\text{-}\Cat}}
\newcommand{\tPsCat}{{{\otimes}\text{-}\PsCat}}
\newcommand{\uCat}{{\unit\text{-}\Cat}}
\newcommand{\uPsCat}{{\unit\text{-}\PsCat}}
\newcommand{\Mod}{{\mathcal{M}\mathit{od}}}
\newcommand{\Rep}{\mathcal{R}\mathit{ep}}
\DeclareMathOperator{\ev}{ev}
\DeclareMathOperator{\coev}{coev}
\newcommand{\op}{\mathrm{op}}
\newcommand{\br}[2][d]{[{#2}]_{#1}}
\newcommand{\bigbr}[2][d]{\bigl[{#2}\bigr]_{#1}}
\newcommand{\ag}[2][t]{\langle{#2}\rangle_{#1}}
\newcommand{\aag}[2][t]{\langle\mspace{-3mu}\langle{#2}\rangle\mspace{-3mu}\rangle_{#1}}
\newcommand{\bigag}[2][t]{\bigl\langle{#2}\bigl\rangle_{#1}}
\newcommand{\Bigag}[2][t]{\Bigl\langle{#2}\Bigl\rangle_{#1}}
\newcommand{\bigaag}[2][t]{\bigl\langle\mspace{-3mu}\bigl\langle{#2}\bigl\rangle\mspace{-3mu}\bigl\rangle_{#1}}
\newcommand{\eqmap}{\stackrel{\hspace{-2pt}\sim}{\rule{0pt}{2pt}\smash\to}}
\newcommand{\bijection}{\stackrel{1:1}{\rule{0pt}{2.5pt}\smash\longleftrightarrow}}
\newcommand{\bijectivemap}{\stackrel{1:1}{\rule{0pt}{2.5pt}\smash\longrightarrow}}
\newcommand{\inbox}[1]{{\xybox{*+{#1}*\frm{-}}}}
\theoremstyle{plain}
  \newtheorem{theorem}{Theorem}[section]
  \newtheorem{proposition}[theorem]{Proposition}
  \newtheorem{lemma}[theorem]{Lemma}
  \newtheorem{corollary}[theorem]{Corollary}
\theoremstyle{definition}
  \newtheorem{definition}[theorem]{Definition}
  \newtheorem{notation}[theorem]{Notation}
  \newtheorem{assumption}[theorem]{Assumption}
  \newtheorem{remark}[theorem]{Remark}
  \newtheorem{example}[theorem]{Example}
\let\origboxtimes=\boxtimes
\def\boxtimes{\DOTSB\mathbin{\mathchoice{\dboxtimes}{\dboxtimes}{\sboxtimes}{\ssboxtimes}}\DOTSB}
\def\dboxtimes{\raise-.92pt\hbox{$\origboxtimes$}}
\def\sboxtimes{\raise-.89pt\hbox{$\scriptstyle\origboxtimes$}}
\def\ssboxtimes{\raise-.89pt\hbox{$\scriptscriptstyle\origboxtimes$}}
\def\barboxtimes{\DOTSB\mathbin{\overline{\mathchoice{\dboxtimes}{\dboxtimes}{\sboxtimes}{\ssboxtimes}}}\DOTSB}
\DeclareMathOperator*{\bigboxtimes}{\raise-2.45pt\hbox{\scalebox{2}{$\boxtimes$}}}
\DeclareMathOperator*{\bigbarboxtimes}{\overline{\raise-2.45pt\hbox{\scalebox{2}{$\boxtimes$}}}}
\let\Kar\relax \DeclareMathOperator{\Kar}{\mathcal{K}\mspace{-2mu}\mathit{ar}}
\let\Ps\relax \DeclareMathOperator{\Ps}{\mathcal{P}\mspace{-2mu}\mathit{s}}
\renewcommand{\Triv}{{\mathcal{T}\mspace{-4mu}\mathit{riv}}}
\renewcommand{\TL}{\mathcal{T\!\!\!\;L}}
\let\HOM\relax \DeclareMathOperator{\HOM}{\mathcal{H}\mspace{-2mu}\mathit{om}}
\journal{arXiv.org}
\begin{document}

\begin{frontmatter}



\title{On representation categories of wreath products\\in non-integral rank}


\author[utms]{Masaki Mori}
\ead{mori@ms.u-tokyo.ac.jp}

\address[utms]{Graduate School of Mathematical Sciences, The University of Tokyo, Tokyo 153, Japan}

\begin{abstract}
For an arbitrary commutative ring $k$ and $t\in k$,
we construct a 2-functor $\mathcal{S}_t$ which sends a tensor category to a new tensor category.
By applying it to the representation category of a bialgebra
we obtain a family of categories which interpolates
the representation categories of the wreath products of the bialgebra.
This generalizes the construction of Deligne's category $\mathrm{Rep}(S_t,k)$
for representation categories of symmetric groups.
\end{abstract}

\begin{keyword}

Tensor categories \sep Deligne's category \sep Partition algebras
\end{keyword}

\end{frontmatter}



\section{Introduction}
Let $\kk$ be a commutative ring.
In \cite{Deligne:2007}, Deligne introduced a tensor category
$\mathrm{Rep}(S_t,\kk)$ for an arbitrary $t\in\kk$,
``the category of representations of the symmetric group of rank $t$ over $k$''
in some sense.
This category is consisting of objects which imitate some classes of
representations of the symmetric group of indefinite rank.
If the rank $t$ is a natural number,
the usual representation category of the symmetric group will be
restored by taking a quotient of Deligne's category.

Generalizations of Deligne's category are considered by many authors,
e.g.\ Knop \cite{Knop:2006,Knop:2007}, Etingof~\cite{Etingof:2009} and Mathew~\cite{Mathew:2010}.
In this paper we give another generalization:
we extend Deligne's construction
to a 2-functor $\cS_t$ which sends a tensor category to another tensor category.
In other words, for each tensor category $\cC$ the 2-functor $\cS_t$ provides a new tensor category $\cS_t(\cC)$.
Using this 2-functor,
Deligne's category is obtained by applying it to the trivial tensor category
consisting of only one object.
Moreover if we apply $\cS_t$ to a representation category of some bialgebra,
we will get a family of new tensor categories which interpolates the representation categories
of the wreath products of the bialgebra.
For a finite group $G$, Knop's interpolation $\mathrm{Rep}(G\wr S_t,\kk)$
is essentially the same as ours
but in general either construction does not include the other.
For example, in Knop's category $\cT(\cA,\delta)$, the tensor product is always symmetric
and every object has its dual;
however our $\cS_t(\cC)$ satisfies neither of them unless the base category $\cC$ does.

The 2-functor $\cS_t$ naturally preserves various
structures of categories such as
duals, braidings (symmetric or not), twists, traces and so on (see \ref{sec:cat_with_str}).
In particular, if $\cC$ is a braided tensor category then
so is $\cS_t(\cC)$.
In this case, we can represent and calculate morphisms in $\cS_t(\cC)$
by string diagrams.
These diagrams are generalizations of those used for partition algebras~\cite{Jones:1994,Martin:1994}
and can be regarded as ``$\cC$-colored'' variants of them.
For example, there is a morphism in $\cS_t(\cC)$
represented by a diagram
\begin{gather*}
\begin{xy}
(-12,0)*+{\varphi}*\frm{-}="p",
(0,0)*+{\psi}*\frm{-}="q",
(12,0)*+{\xi}*\frm{-}="r",
(-18,14)*={\bullet}="u1"+(0,3)*{U_1},
(0,14)*={\bullet}="u2"+(0,3)*{U_2},
(18,14)*={\bullet}="u3"+(0,3)*{U_3},
(-18,-14)*={\bullet}="v1"+(0,-3)*{V_1},
(-6,-14)*={\bullet}="v2"+(0,-3)*{V_2},
(6,-14)*={\bullet}="v3"+(0,-3)*{V_3},
(18,-14)*={\bullet}="v4"+(0,-3)*{V_4},
(-8,7)*{\hole}="x",
(8.5,-9.5)*{\hole}="y",
"p"+(-1,-5)="p-",
"q"+(0,5)="q+",
"r"+(0,7)="r+",
"p";"p-" **\dir{-},
"q";"q+" **\dir{-},
"r";"r+" **\dir{-} ?>*\dir{x},
"p";"u2" **\crv{(-9,9)},
"q+"."x";"u1" **\crv{(-10,6)}, "q+";"u1"."x" **\crv{(-10,6)},
"q+";"u3" **\crv{(3,10)},
"p-";"v1" **\crv{(-16,-8)},
"p-";"v2" **\crv{(-8,-6)},
"r"."y";"v3" **\crv{(10,-10)}, "r";"v3"."y" **\crv{(10,-10)},
"q";"v4" **\crv{(2,-8)},
\end{xy}
\end{gather*}
where $U_1$, $U_2$, $U_3$, $V_1$, $V_2$, $V_3$ and $V_4$ are objects of $\cC$
and $\varphi$, $\psi$ and $\xi$ are suitable morphisms in $\cC$.
Composition of such morphisms is expressed by vertical connection of diagrams
and tensor product by horizontal arrangement.
By Theorem~\ref{thm:universality} we also prove that
$\cS_t(\cC)$ can be described in terms of generators (i.e.\ pieces of diagrams)
and relations (i.e.\ local transformation of diagrams).
In fact it has a universal property which says that
it is the smallest braided tensor category which satisfies these relations.
Its generators and the relations are listed in Proposition~\ref{prop:diagram_transform}.

In the rest of the paper we extend the result of Comes and Ostrik~\cite{ComesOstrik:2011}
which describes the structure of Deligne's category.
Assume that $\kk$ is a field of characteristic zero
and let $\cC$ be an abelian semisimple tensor category
whose every simple object $U$ satisfies $\End_\cC(U)\simeq\kk$.
In this case, we can completely describe the structure of the category $\cS_t(\cC)$;
we classify the indecomposable objects, simple objects and blocks.
We parameterize them using sequences of Young diagrams indexed by the simple objects of $\cC$. 
See Theorem~\ref{thm:blocks} for details.
In fact, ignoring the structure of tensor product, this category is equivalent to the direct sum of some copies of
Deligne's category $\mathrm{Rep}(S_{t-m},\kk)$ as $m\in\NN$ varies.
In particular, if $t\not\in\NN$ then $\cS_t(\cC)$ is also abelian semisimple
and we can produce a large number of new abelian semisimple tensor categories
which can not be realized as representation category of algebraic structure.

I would like to thank Hisayosi Matumoto who taught me
about representation theory from the basics for a long time.
I am also grateful to my colleagues, especially
to Hideaki Hosaka and Hironori Kitagawa for many useful suggestions.

\subsection{Conventions and Notations}
In this paper, a ring means an associative ring with unit
and ring homomorphisms preserve the unit.
A module over a ring is always a left module and unital.
We use the symbol $\kk$ to denote
a commutative ring and
for $\kk$-modules $U$ and $V$,
we write $U\otimes V$ instead of $U\otimes_\kk V$ for short.

For a category $\cC$ the notation $U\in\cC$ means that $U$ is an object of $\cC$.
For $U,V\in\cC$, we denote by $\Hom_\cC(U,V)$ the set of morphisms from $U$ to $V$.
If $U=V$ we also denote it by $\End_\cC(U)$.
For a natural transformation $\eta\colon F\to G$
between two functors $F,G\colon\cC\to\cD$,
we denote its component at an object $U\in\cC$ by
$\eta(U)\colon F(U)\to G(U)$.
We do not ask the meanings of the terms ``small'' and ``large''
about sizes of categories;
some readers may interpret them with class theory while others prefer to use Grothendieck universes.

We include zero in the set of natural numbers, so $\NN=\{0,1,2,\dots\}$.

\section{The Language of Linear Categories}
In this section we quickly review the theory of linear categories.

\subsection{Definition and Properties}
\begin{definition}
\label{def:lincat}
\begin{enumerate}
\item A category $\cC$ is called a {\itshape $\kk$-linear category}
if for each objects $U,V\in\cC$, $\Hom_\cC(U,V)$ is endowed with a structure of $\kk$-module
and the composition of morphisms is $\kk$-bilinear.
\item A functor $F\colon\cC\to\cD$ between
two $\kk$-linear categories is called {\itshape $\kk$-linear}
if for any $U,V\in\cC$ the map $F\colon\Hom_\cC(U,V)\to\Hom_\cD(F(U),F(V))$ is $\kk$-linear.
We define \term{$\kk$-multilinear functor} in the same way.
\item A \term{$\kk$-linear transformation} is just a natural transformation between two $\kk$-linear functors.
\end{enumerate}
\end{definition}
Some authors call a $\kk$-linear category a {\itshape $\kk$-preadditive category} or
a {\itshape $\kk$-category}.
These below are examples of $\kk$-linear categories which we use later.

\begin{definition}
\label{def:lincat_ex}
\begin{enumerate}
\item We denote by $\Triv_\kk$ the \term{trivial $\kk$-linear category} consisting of a single object 
$\unit\in\Triv_\kk$ which satisfies $\End_{\Triv_\kk}(\unit)\simeq\kk$.
\item For a $\kk$-algebra $A$, we denote by $\Mod(A)$ the category consisting of $A$-modules and $A$-homomorphisms,
and $\Rep(A)$ the full subcategory of $\Mod(A)$ consisting of
$A$-modules which are finitely generated and projective over $\kk$.
\item For two $\kk$-linear categories $\cC$ and $\cD$,
we denote by $\HOM_\kk(\cC,\cD)$ the category consisting of $\kk$-linear functors from $\cC$ to $\cD$
and $\kk$-linear transformations between them.
\end{enumerate}
\end{definition}

In a $\kk$-linear category finite product and finite coproduct
coincide and both are called direct sum.
$\kk$-linear functors and transformations are automatically compatible with taking direct sum.

\begin{definition}
\label{def:addkarps}
Let $\cC$ be a $\kk$-linear category.
\begin{enumerate}
\item $\cC$ is called \term{additive} if for any $U_1,\dots,U_m\in\cC$ there exists
their direct sum $U_1\oplus\dots\oplus U_m\in\cC$ (including zero object for $m=0$).
\item $\cC$ is called \term{Karoubian} (or \term{idempotent complete}) if for any $U\in\cC$
and any idempotent $e=e^2\in\End_\cC(U)$
there exists its image $e U\in\cC$.
In other words, $\cC$ is Karoubian if every idempotent $e\in\End_\cC(U)$
admits a direct sum decomposition $U\simeq eU\oplus(1-e)U$.
\item $\cC$ is called \term{pseudo-abelian} if it is additive and Karoubian.
\end{enumerate}
\end{definition}
For example, $\Mod(A)$ and $\Rep(A)$ are both pseudo-abelian $\kk$-linear categories.
The category $\HOM_\kk(\cC,\cD)$ of $\kk$-linear functors is additive or Karoubian
if the target category $\cD$ is.

\begin{definition}
\label{def:homfin}
A $\kk$-linear category
$\cC$ is called \term{hom-finite} (resp.\ \term{projective})
if $\Hom_\cC(U,V)$ is finitely generated (resp.\ projective) over $\kk$ for every $U,V\in\cC$.
\end{definition}
For example, $\Rep(\kk)$ is clearly hom-finite and projective.
If $\kk$ is Noetherian $\Rep(A)$ is also hom-finite
for any $\kk$-algebra $A$ since $\Hom_A(U,V)\subset\Hom_\kk(U,V)$.
Similarly if $\kk$ is a hereditary ring $\Rep(A)$ is automatically projective.

\begin{definition}
Let $\cC$ be a pseudo-abelian $\kk$-linear category.
An \term{indecomposable object} in $\cC$ is an object $U$ such that
$U\simeq U_1\oplus U_2$ implies either $U_1\simeq0$ or $U_2\simeq0$.
$\cC$ is called a \term{Krull--Schmidt category}
if it satisfies the following two conditions:
\begin{enumerate}
\item every object in $\cC$ is a finite direct sum of indecomposable objects,
\item the endomorphism ring of each indecomposable object in $\cC$ is a local ring.
\end{enumerate}
\end{definition}
It is clear that every hom-finite pseudo-abelian linear category over a field
is a Krull--Schmidt category.
In such a category, the factors in the indecomposable decomposition of an object is uniquely determined.
\begin{theorem}
\label{thm:krull_schmidt}
Let $\cC$ be a Krull--Schmidt category.
Let $U\simeq V_1\oplus\dots\oplus V_m\simeq W_1\oplus\dots\oplus W_n\in\cC$
be two decompositions of an object into indecomposable objects.
Then $m=n$ and $V_i\simeq W_i$ after reordering if necessary.
\end{theorem}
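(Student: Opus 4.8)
The plan is to reduce everything to the hypothesis that the endomorphism ring of each indecomposable object is local, and to exploit the one elementary fact we need about a (possibly noncommutative) local ring $R$: its non-units form a two-sided ideal, so if a finite sum $r_1+\dots+r_k=1$ then at least one $r_i$ is a unit, and consequently a product is a unit if and only if both factors are. First I would record the two decompositions as families of structure morphisms: writing $\iota_i\colon V_i\to U$, $\pi_i\colon U\to V_i$ and $\iota'_j\colon W_j\to U$, $\pi'_j\colon U\to W_j$, we have $\pi_i\iota_i=\id_{V_i}$, $\pi_i\iota_{i'}=0$ for $i\neq i'$ and $\sum_i\iota_i\pi_i=\id_U$, and likewise for the primed family. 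Since each $\End_\cC(V_i)$ is local it has $1\neq0$, hence $\id_{V_i}\neq0$, so every $V_i$ (and every $W_j$) is nonzero.

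The first real step is a \emph{matching lemma}. Composing $\sum_j\iota'_j\pi'_j=\id_U$ on the left with $\pi_1$ and on the right with $\iota_1$ gives $\sum_j(\pi_1\iota'_j)(\pi'_j\iota_1)=\id_{V_1}$ in the local ring $\End_\cC(V_1)$. Hence some summand, say after reindexing the one with $j=1$, is a unit; writing $a=\pi'_1\iota_1\colon V_1\to W_1$ and $b=\pi_1\iota'_1\colon W_1\to V_1$, the composite $ba$ is an automorphism $u$ of $V_1$. Then $u^{-1}b$ is a left inverse of $a$, so $a$ is a split monomorphism and $e=au^{-1}b$ is an idempotent in $\End_\cC(W_1)$ with $W_1\simeq eW_1\oplus(1-e)W_1\simeq V_1\oplus(1-e)W_1$. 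As $W_1$ is indecomposable and $V_1\neq0$, the complement vanishes and $a\colon V_1\to W_1$ is an isomorphism, so $V_1\simeq W_1$.

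Next I would carry out the \emph{exchange} and \emph{cancellation} driving an induction on $m$. Using that $a$ is invertible, the block morphism $V_1\oplus W_2\oplus\dots\oplus W_n\to W_1\oplus\dots\oplus W_n$ with diagonal entries $a,\id_{W_2},\dots,\id_{W_n}$ and the remaining first column $(\pi'_j\iota_1)_{j\ge2}$ below $a$ is an isomorphism, since it differs from a block-diagonal isomorphism by a strictly lower-triangular, hence nilpotent, perturbation. Combining with $U\simeq V_1\oplus\dots\oplus V_m$ yields $V_1\oplus(W_2\oplus\dots\oplus W_n)\simeq V_1\oplus(V_2\oplus\dots\oplus V_m)$, and it remains to cancel the common summand $V_1$. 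This cancellation is where the local hypothesis is used a second time and is, to my mind, the main obstacle: given an isomorphism $\Phi$ between $V_1\oplus X$ and $V_1\oplus Y$, I would inspect its $(1,1)$ block in $\End_\cC(V_1)$; locality forces that block to become a unit after precomposing $\Phi$ with a suitable automorphism of $V_1\oplus X$ built from off-diagonal morphisms (again using that one summand of an identity decomposition must be a unit), whereupon a triangular reduction as above splits off $V_1$ compatibly and gives $X\simeq Y$. Feeding this back produces $V_2\oplus\dots\oplus V_m\simeq W_2\oplus\dots\oplus W_n$ with $V_1\simeq W_1$ already matched, and the inductive hypothesis finishes the argument; in the base case where one side is empty, a nonzero indecomposable cannot be a retract of the zero object, which forces the other side to be empty as well and hence $m=n$.
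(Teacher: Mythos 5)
Your proof is correct and follows essentially the same route as the paper, which gives no argument of its own but states that the proof is the same as for the classical Krull--Schmidt theorem for modules and refers to \cite{Benson:1991}: your matching lemma, exchange step, and cancellation via local endomorphism rings is exactly that standard (Azumaya-style) argument transcribed into the categorical setting. The details you sketch all go through --- the idempotent splitting in the matching step is available because the paper defines Krull--Schmidt categories to be pseudo-abelian, and your cancellation step works because in a local ring a one-sidedly invertible element is a unit and the sum of a unit and a non-unit is a unit, so the $(1,1)$ block can indeed be made invertible by a unitriangular automorphism built from a block of $\Phi^{-1}$.
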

This is a generalization of the usual Krull--Schmidt theorem for modules over a ring,
and the proof of them are same.
See e.g.\ \cite{Benson:1991}.
So to describe the structure of a Krull--Schmidt category
all we need is the classification of indecomposable objects
and morphisms between them.

\subsection{Envelopes}

A $\kk$-linear category is not necessarily additive nor Karoubian in general;
so the direct sum of objects or the image of an idempotent does not necessarily exist.
But we can formally add the results of these operations into our category
to make a new category including them.

\begin{definition}
\label{def:envelopes}
Let $\cC$ be a $\kk$-linear category.
\begin{enumerate}
\item Define the $\kk$-linear category $\Add(\cC)$ as follows:
\begin{description}
\item[Object] A finite tuple $(U_1,\dots,U_m)$ of objects in $\cC$.
\item[Morphism] $\Hom_{\Add(\cC)}((U_1,\dots,U_m),(V_1,\dots,V_n))\coloneqq\bigoplus_{i,j}\Hom_\cC(U_i,V_j)$
and the composition of morphisms is same as the product of matrices.
\end{description}
We simply denote $(U)\in\Add(\cC)$ by $U$, then $(U_1,\dots,U_m)\simeq U_1\oplus\dots\oplus U_m$
and the empty tuple $()$ is a zero object.
$\Add(\cC)$ is called the \term{additive envelope} of $\cC$.

\item Define the $\kk$-linear category $\Kar(\cC)$ as follows:
\begin{description}
\item[Object] A pair $(U,e)$ of an object $U\in\cC$ and an idempotent $e=e^2\in\End_\cC(U)$.
\item[Morphism] $\Hom_{\Kar(\cC)}((U,e),(V,f))\coloneqq f\circ\Hom_\cC(U,V)\circ e$.
\end{description}
We denote $(U,\id_U)\in\Kar(\cC)$ by $U$, then $(U,e)\simeq e U$.
$\Kar(\cC)$ is called the \term{Karoubian envelope} (or the \term{idempotent completion}) of $\cC$.

\item $\Ps(\cC)\coloneqq\Kar(\Add(\cC))$ is called the \term{pseudo-abelian envelope} of $\cC$.
\end{enumerate}
\end{definition}

Clearly $\Add(\cC)$ is additive and $\Kar(\cC)$ is Karoubian.
$\Ps(\cC)$ is pseudo-abelian
since $\Kar(\cC)$ is additive when $\cC$ is: $(U,e)\oplus(V,f)\simeq(U\oplus V,e\oplus f)$.
The base category $\cC$ is embedded in $\Add(\cC)$ (resp.\ $\Kar(\cC)$, $\Ps(\cC)$) as a full subcategory
and this embedding is a category equivalence if and only if $\cC$ is additive (resp.\ Karoubian, pseudo-abelian).

\begin{example}
\begin{align*}
\Add(\Triv_\kk)&\simeq\text{(The category of finitely generated free $\kk$-modules)},\\
\Ps(\Triv_\kk)&\simeq\text{(The category of finitely generated projective $\kk$-modules)}\\&=\Rep(\kk).
\end{align*}
\end{example}

To describe the universal properties of the operation $\Ps$
we should use the notions of \term{2-categories} and \term{2-functors}.
For their definitions, see e.g.\ \cite{Leinster:2004}.
Let us denote by $\Cat_\kk$ the 2-category consisting of (small) $\kk$-linear categories,
functors and transformations,
and by $\PsCat_\kk$
the full sub-2-category of $\Cat_\kk$ consisting of pseudo-abelian
$\kk$-linear categories.
For a $\kk$-linear functor $F\colon \cC\to\cD$, we can extend it to
the functor $\Ps(F)\colon\Ps(\cC)\to\Ps(\cD)$ between the envelopes
in the obvious manner.
Moreover, for a $\kk$-linear transformation $\eta\colon F\to G$
we can also define the transformation $\Ps(\eta)\colon\Ps(F)\to\Ps(G)$.
So the operation $\Ps\colon\Cat_\kk\to\PsCat_\kk$ is actually a 2-functor between these 2-categories.
This is the left adjoint of the embedding $\PsCat_\kk\hookrightarrow\Cat_\kk$
in the 2-categorical sense;
that is, if $\cD$ is pseudo-abelian then the restriction of functors
induces a category equivalence
\begin{gather*}
\HOM_\kk(\Ps(\cC),\cD)\eqmap\HOM_\kk(\cC,\cD).
\end{gather*}


We say a pseudo-abelian $\kk$-linear category $\cC$
\term{is generated by} a full subcategory $\cC'\subset\cC$
if every object in $\cC$ is isomorphic to some direct summand
of a direct sum of objects in $\cC'$,
or equivalently, $\Ps(\cC')\simeq\cC$.
When this condition is satisfied we also say
objects in $\cC'$ generate $\cC$.

\subsection{Tensor categories}

A tensor category is a kind of generalization of categories
which have binary ``product'', associative and unital up to isomorphism,
such as the category of vector spaces with tensor product.

\begin{definition}
\label{def:tencat}
\begin{enumerate}
\item A \term{$\kk$-tensor category} is a $\kk$-linear category $\cC$ equipped with
a $\kk$-bilinear functor $\otimes:\cC\times\cC\to\cC$ called the \term{tensor product} and
a functorial isomorphism $\alpha_\cC$
called the \term{associativity constraint}
with components
$\alpha_\cC(U,V,W)\colon(U\otimes V)\otimes W\eqmap U\otimes (V\otimes W)$
such that the diagram below commutes:
\begin{align*}
\xymatrix @C 0pt{
&(U\otimes V)\otimes(W\otimes X) \ar[rd]^-{\quad\alpha_\cC(U,V,W\otimes X)}\\
((U\otimes V)\otimes W)\otimes X \ar[ru]^-{\alpha_\cC(U\otimes V,W,X)\quad} \ar[d]_-{\alpha_\cC(U,V,W)\otimes\id_X}&&
U\otimes(V\otimes(W\otimes X))\rlap{.}\\
(U\otimes(V\otimes W))\otimes X \ar[rr]_-{\alpha_\cC(U,V\otimes W,X)}&&
U\otimes((V\otimes W)\otimes X) \ar[u]_-{\id_U\otimes\alpha_\cC(V,W,X)}
}
\end{align*}

\item A \term{unit object} of a $\kk$-tensor category $\cC$ is an object $\unit_\cC\in\cC$ equipped with
two functorial isomorphisms
$\lambda_\cC(U)\colon \unit_\cC\otimes U\eqmap U$ and
$\rho_\cC(U)\colon U\otimes\unit_\cC\eqmap U$
called the \term{unit constraints}
such that the diagram below commutes:
\begin{align*}
\xymatrix @C 10pt{
(U\otimes\unit_\cC)\otimes V \ar[rr]^-{\alpha_\cC(U,\unit_\cC,V)} \ar[rd]_-{\rho_\cC(U)\otimes\id_V~~}&&
U\otimes(\unit_\cC\otimes V) \ar[ld]^-{~~\id_U\otimes\lambda_\cC(V)}\\&
U\otimes V\rlap{.}
}
\end{align*}
\end{enumerate}
\end{definition}

Since the equality $(U\otimes V)\otimes W=U\otimes(V\otimes W)$ is too strict in category theory,
we need a functorial isomorphism instead.
However, Mac~Lane's coherence theorem~\cite{Mac-Lane:1998} allows us to define the $m$-fold tensor product
$U_1\otimes\dots\otimes U_m$ for multiple objects $U_1,\dots,U_m\in\cC$ since
it does not depend on the order of taking tensor product up to a unique isomorphism.
Similarly for an object $U_1\otimes\dots\otimes U_m$
we can freely insert or remove
tensor products of unit objects.

Remark that a unit object is unique up to a unique isomorphism if exists.
If $\cC$ has a unit object $\unit_\cC$ then $\End_\cC(\unit_\cC)$ is
also a commutative ring
and $\cC$ has two (possibly different) structures of $\End_\cC(\unit_\cC)$-linear category
induced by $\lambda_\cC$ and $\rho_\cC$.

\begin{assumption}
In this paper we do not treat tensor categories without units.
We always assume that each $\kk$-tensor category $\cC$ is endowed with a fixed unit object $\unit_\cC\in\cC$.
In addition, we require that
the unit object $\unit_\cC$ satisfies $\End_\cC(\unit_\cC)\simeq\kk$.
\end{assumption}

In the rest of this paper, we omit writing
the isomorphisms $\alpha_\cC$, $\lambda_\cC$ and $\rho_\cC$ explicitly for
a $\kk$-tensor category $\cC$ since
the reader can complete them easily if needed.

\begin{example}
$\Triv_\kk$ has the unique structure of $\kk$-tensor category.
$\Mod(\kk)$ and $\Rep(\kk)$ are $\kk$-tensor categories with usual tensor products of modules.
More generally, for a bialgebra $A$ over $\kk$, the $\kk$-linear categories
$\Mod(A)$ and $\Rep(A)$ have structures of $\kk$-tensor category.
For $A$-modules $U$ and $V$, the $\kk$-module $U\otimes V$ becomes an $A$-module
via the coproduct of $A$, $\Delta_A\colon A\to A\otimes A$.
The unit object $\unit_A$ is defined to be $\kk$ as a $\kk$-module and
the action of $A$ is the scalar multiplication by the counit of $A$, $\epsilon_A\colon A\to\kk$.
\end{example}

Next we define the corresponding structures on functors and transformations.
Again we need functorial isomorphisms to avoid using equations.
\begin{definition}
\label{def:tenfun}
\begin{enumerate}
\item A $\kk$-tensor functor $F\colon\cC\to\cD$ between $\kk$-tensor categories
is a $\kk$-linear functor equipped with
functorial isomorphisms
$\mu_F(U,V)\colon F(U)\otimes F(V)\eqmap F(U\otimes V)$ and
$\iota_F\colon \unit_\cD\eqmap F(\unit_\cC)$
such that the diagrams below commute:
\begin{align*}
\xymatrix @C 40pt{
F(U)\otimes F(V)\otimes F(W) \ar[r]^-{\id_{F(U)}\otimes\mu_F(V,W)} \ar[d]_-{\mu_F(U,V)\otimes\id_{F(W)}}&
F(U)\otimes F(V\otimes W) \ar[d]^-{\mu_F(U,V\otimes W)}\\
F(U\otimes V)\otimes F(W) \ar[r]_-{\mu_F(U\otimes V,W)}&
F(U\otimes V\otimes W)\rlap{,}\\
}&&
\xymatrix @C 4pt{
F(U) \ar[r]^-{\id_{F(U)}\otimes\iota_F} \ar[d]_-{\iota_F\otimes\id_{F(U)}} \ar@{=}[rd]&
F(U)\otimes F(\unit_\cC) \ar[d]^-{\mu_F(U,\unit_\cC)}\\
F(\unit_\cC)\otimes F(U) \ar[r]_-{\mu_F(\unit_\cC,U)}&
F(U)\rlap{.}
}
\end{align*}
In other words, the isomorphisms $\mu_F$ and $\iota_F$ must be associative and unital.
\item A $\kk$-tensor transformation $\eta\colon F\to G$ between $\kk$-tensor functors
is a $\kk$-linear transformation 
such that the diagrams below commute:
\begin{align*}
\xymatrix{
F(U)\otimes F(V) \ar[r]^-{\mu_F(U,V)} \ar[d]_-{\eta(U)\otimes\eta(V)}&
F(U\otimes V) \ar[d]^-{\eta(U\otimes V)}\\
G(U)\otimes G(V) \ar[r]_-{\mu_G(U,V)}&
G(U\otimes V)\rlap{,}
}&&
\xymatrix{
\unit_\cD \ar[r]^-{\iota_F} \ar@{=}[d]&
F(\unit_\cC) \ar[d]^-{\eta(\unit_\cC)}\\
\unit_\cD \ar[r]_-{\iota_G}&
G(\unit_\cC)\rlap{.}
}
\end{align*}
In other words, a $\kk$-tensor transformation $\eta$ must satisfy that
$\eta(U\otimes V)=\eta(U)\otimes\eta(V)$ and $\eta(\unit_\cC)=\id_{\unit_\cD}$.
\end{enumerate}
\end{definition}

Beware that the category $\HOM_\kk^\otimes(\cC,\cD)$ consisting of
$\kk$-tensor functors and transformations is no longer $\kk$-linear.

\subsection{Braided tensor categories}

A braided tensor category is a tensor category equipped with
a functorial isomorphism called \term{braiding},
which allows us to swap two objects in a tensor product $U\otimes V$.

\begin{definition}
\label{def:braid}
\begin{enumerate}
\item A \term{braiding} (also called a \term{commutativity constraint})
on a $\kk$-tensor category $\cC$ is a functorial isomorphism
$\sigma_\cC(U,V)\colon U\otimes V\eqmap V\otimes U$
such that the diagrams below commute:
\begin{align*}
\xymatrix @C 0pt{
U\otimes V\otimes W \ar[rr]^-{\sigma_\cC(U,V\otimes W)} \ar[rd]_-{\sigma_\cC(U,V)\otimes\id_W\quad}&&
V\otimes W\otimes U\rlap{,} \\&
V\otimes U\otimes W \ar[ru]_-{\quad\id_V\otimes\sigma_\cC(U,W)}
}&&
\xymatrix @C 0pt{
&U\otimes W\otimes V \ar[rd]^-{\quad\sigma_\cC(U,W)\otimes\id_V}\\
U\otimes V\otimes W \ar[rr]_-{\sigma_\cC(U\otimes V, W)} \ar[ru]^-{\id_U\otimes\sigma_\cC(V,W)\quad}&&
W\otimes U\otimes V\rlap{.}
}
\end{align*}
The \term{inverse} of the braiding $\sigma_\cC$ is defined by $\sigma_\cC^{-1}(V,W)\coloneqq\sigma_\cC(W,V)^{-1}$.
A braiding $\sigma_\cC$ is called \term{symmetric} if $\sigma_\cC=\sigma_\cC^{-1}$.
\item A $\kk$-tensor category $\cC$ equipped with a braiding $\sigma_\cC$
is called a \term{$\kk$-braided tensor category}.
If the braiding is symmetric, we call it a \term{$\kk$-symmetric tensor category}.
\item A \term{$\kk$-braided tensor functor} $F\colon\cC\to\cD$ between $\kk$-braided tensor categories
is a $\kk$-tensor functor such that the diagram below commutes:
\begin{gather*}
\xymatrix{
F(U)\otimes F(V) \ar[r]^-{\mu_F(U,V)} \ar[d]_-{\sigma_\cD(F(U),F(V))}&
F(U\otimes V) \ar[d]^-{F(\sigma_\cC(U,V))}\\
F(V)\otimes F(U) \ar[r]_-{\mu_F(V,U)}&
F(V\otimes U)\rlap{.}
}
\end{gather*}
\item A \term{$\kk$-braided tensor transformation} is just a $\kk$-tensor transformation
between two $\kk$-braided tensor functors.
\end{enumerate}
\end{definition}

The axiom says that
the braiding $\sigma_\cC(U_1\otimes\dots\otimes U_m,V_1\otimes\dots\otimes V_n)$
between tensor products
is determined by $\sigma_\cC(U_i,V_j)$ at each terms $U_i$ and $V_j$.
It also indicates that
for each $g\in\fB_m$, where $\fB_m$ is the braid group of order $m$,
there is a well-defined functorial isomorphism
\begin{align*}
\sigma_\cC^g(U_1,\dots,U_m)\colon
U_1\otimes\dots\otimes U_m\eqmap U_{g^{-1}(1)}\otimes\dots\otimes U_{g^{-1}(m)}
\end{align*}
which permutes the terms of tensor products along $g$ using the braiding $\sigma_\cC$.
When the braiding is symmetric then
$\sigma_\cC^g$ is well-defined for $g\in\fS_m$,
an element of the symmetric group.

\begin{example}
If $A$ is a cocommutative bialgebra then
the transposition map
$U\otimes V\eqmap V\otimes U;u\otimes v\mapsto v\otimes u$
for $U,V\in\Mod(A)$ is an $A$-homomorphism.
Thus this functorial isomorphism defines a structure of $\kk$-symmetric tensor category
on $\Mod(A)$.
On the other hand, the quantum enveloping algebra $U_q(\mathfrak{g})$ over $\kk=\CC(q)$
is not cocommutative, but the category of finite dimensional $\mathfrak{h}$-semisimple
$U_q(\mathfrak{g})$-modules has a non-symmetric braiding
introduced by an $R$-matrix.
\end{example}

\section{Representation Category of Wreath Product}

Let $d\in\NN$.
For each $\kk$-algebra $A$, we can construct a new algebra
$A\wr\fS_d$ called the \term{wreath product} of $A$ of rank $d$
following the two steps below:
\begin{gather*}
A\longmapsto A^{\otimes d}\longmapsto A\wr\fS_d.
\end{gather*}
\begin{enumerate}
\item Create the $d$-fold tensor product algebra
$A^{\otimes d}=A\otimes\dots\otimes A$ from the base algebra $A$.
Then the symmetric group $\fS_d$ of rank $d$ naturally acts on
$A^{\otimes d}$ by permutation of terms.
\item Create the semidirect product algebra $A\wr\fS_d=A^{\otimes d}\rtimes\fS_d$
by twisting the product via the action $\fS_d\curvearrowright A^{\otimes d}$.
\end{enumerate}
For these three algebras
we have corresponding representation categories
\begin{gather*}
\Rep(A)\longmapsto\Rep(A^{\otimes d})\longmapsto\Rep(A\wr\fS_d).
\end{gather*}
One of the remarkable facts is, under suitable conditions, that
we can proceed these steps using the categorical language only
and create these representation categories
without the information about the base algebra $A$ itself.
This operation can be applied to an arbitrary $\kk$-linear category $\cC$
which is not of the form of representation category of algebra.
The procedure for this construction is as follows:
\begin{gather*}
\cC\longmapsto\cC^{\barboxtimes d}\longmapsto(\cC^{\barboxtimes d})^{\fS_d}.
\end{gather*}
\begin{enumerate}
\item Create the $d$-fold tensor product category
$\cC^{\barboxtimes d}=\cC\barboxtimes\dots\barboxtimes\cC$ from the base category $\cC$.
Then the symmetric group $\fS_d$ naturally acts on it.
\item Take the category $(\cC^{\barboxtimes d})^{\fS_d}$ of $\fS_d$-invariants
in $\fS_d\curvearrowright\cC^{\barboxtimes d}$.
\end{enumerate}
We denote the result above by $\cW_d(\cC)\coloneqq(\cC^{\barboxtimes d})^{\fS_d}$.
In this section we see how this process works.
Actually the category $\cS_t(\cC)$ for $t\in\kk$, which is our main product in this paper,
interpolates the family of categories $\cW_d(\cC)$ for $d\in\NN$.

\subsection{Tensor product of Categories}
First we study the tensor product of $\kk$-linear categories.
Recall that if $A$ and $B$ are both $\kk$-algebras then so is $A\otimes B$ naturally.
We see that tensor product of algebras in representation theory
corresponds to that of categories in category theory.

\begin{definition}
\label{def:catten}
Let $\cC,\cD$ be $\kk$-linear categories.
Their \term{tensor product} $\cC\boxtimes\cD$ is the $\kk$-linear category defined as follows:
\begin{description}
\item[Object] a symbol $U\boxtimes V$ for a pair of objects $U\in\cC$ and $V\in\cD$.
\item[Morphism] $\Hom_{\cC\boxtimes\cD}(U\boxtimes V,U'\boxtimes V')\coloneqq\Hom_\cC(U,U')\otimes\Hom_\cD(V,V')$
and composition of morphisms is diagonal.
We denote a morphism by $f\boxtimes g$
instead of $f\otimes g$ for $f\in\Hom_\cC(U,U')$ and $g\in\Hom_\cD(V,V')$.
\end{description}
This operation naturally defines a 2-bifunctor $\boxtimes\colon\Cat_\kk\times\Cat_\kk\to\Cat_\kk$.
For $\kk$-linear functors $F\colon\cC\to\cC'$ and $G\colon\cD\to\cD'$,
the $\kk$-linear functor $F\boxtimes G\colon\cC\boxtimes\cD\to\cC'\boxtimes\cD'$
acts on objects and morphisms diagonally.
For $\kk$-linear transformations $\eta\colon F\to F'$ and $\kappa\colon G\to G'$,
the $\kk$-linear transformation
$\eta\boxtimes\kappa\colon F\boxtimes G\to F'\boxtimes G'$
is defined by
\begin{gather*}
(\eta\boxtimes\kappa)(U\boxtimes V)\coloneqq\eta(U)\boxtimes\kappa(V)
\colon F(U)\boxtimes G(V)\to F'(U)\boxtimes G'(V)
\end{gather*}
at each $U\in\cC$ and $V\in\cD$.
\end{definition}

The product $\boxtimes$ is associative and commutative up to equivalence,
so we can write $\cC_1\boxtimes\dots\boxtimes\cC_d$ without any confusions.
If all terms are equal to $\cC$, we denote it by
$\cC^{\boxtimes d}\coloneqq\cC\boxtimes\dots\boxtimes\cC$.
It is convenient to set $\cC^{\boxtimes 0}\coloneqq\Triv_\kk$, the unit with respect to $\boxtimes$.
The operation $\cC\mapsto\cC^{\boxtimes d}$ also defines a 2-functor
$\Cat_\kk\to\Cat_\kk$.

One of the purpose of considering the tensor product of categories is
to create a universal object related to $\kk$-bilinear functors:
the category of $\kk$-bilinear functors $\cC\times\cD\to\cE$
is equivalent to the category of $\kk$-linear functors $\cC\boxtimes\cD\to\cE$.
It is equivalent to say that the natural functor
\begin{gather*}
\HOM_\kk(\cC\boxtimes\cD,\cE)\eqmap
\HOM_\kk(\cC,\HOM_\kk(\cD,\cE))
\end{gather*}
is a category equivalence (recall that the category $\HOM_\kk(\cD,\cE)$ is again $\kk$-linear).
For pseudo-abelian categories, it is natural to define
the tensor product by $\cC\barboxtimes\cD\coloneqq\Ps(\cC\boxtimes\cD)$.
It satisfies the same universality as above in the 2-category $\PsCat_\kk$.
The unit for $\barboxtimes$ is $\Ps(\Triv_\kk)\simeq\Rep(\kk)$.



Now let us pay attention to its representation-theoretic properties listed in the next proposition.
Recall that for a $\kk$-algebra $A$,
$\Mod(A)$ is the category of all $A$-modules
and $\Rep(A)$ is the category of $A$-modules which are finitely generated and projective over $\kk$.

\begin{proposition}
\label{prop:mimic_tensor}
Let $A$ and $B$ be $\kk$-algebras.
\begin{enumerate}
\item There is a canonical functor
$\Mod(A)\barboxtimes\Mod(B)\to\Mod(A\otimes B)$ which sends an object $U\boxtimes V$ to the $(A\otimes B)$-module $U\otimes V$
on which $A\otimes B$ acts diagonally.
\item If $\Rep(A)$ is hom-finite and projective,
the restriction $\Rep(A)\barboxtimes\Rep(B)\to\Rep(A\otimes B)$
of this functor is fully faithful.
\item Suppose that $\kk$ is a field. If $A$ and $B$ are separable $\kk$-algebras,
the restricted functor above gives a category equivalence.
\end{enumerate}
\end{proposition}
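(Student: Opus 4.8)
The plan is to derive all three statements from the two universal properties recalled above: the one characterizing $\cC\boxtimes\cD$ as the classifying object for $\kk$-bilinear functors, and the one exhibiting $\Ps$ as left adjoint to the inclusion $\PsCat_\kk\hookrightarrow\Cat_\kk$. For (1) I would first check that $(U,V)\mapsto U\otimes_\kk V$, with $A\otimes B$ acting diagonally and with $(f,g)\mapsto f\otimes_\kk g$ on morphisms, is a $\kk$-bilinear functor $\Mod(A)\times\Mod(B)\to\Mod(A\otimes B)$; bilinearity and functoriality are immediate. The universal property of $\boxtimes$ converts it into a $\kk$-linear functor $\Mod(A)\boxtimes\Mod(B)\to\Mod(A\otimes B)$ sending $U\boxtimes V$ to $U\otimes V$, and since $\Mod(A\otimes B)$ is pseudo-abelian the universal property of $\Ps$ extends it uniquely along $\Mod(A)\boxtimes\Mod(B)\hookrightarrow\Mod(A)\barboxtimes\Mod(B)=\Ps(\Mod(A)\boxtimes\Mod(B))$ to the asserted canonical functor.

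For (2), because $\Rep(A)\barboxtimes\Rep(B)=\Ps(\Rep(A)\boxtimes\Rep(B))$, because $\Ps$ carries fully faithful functors to fully faithful functors, and because $\Rep(A\otimes B)$ is already pseudo-abelian, it is enough to prove full faithfulness on the generating subcategory $\Rep(A)\boxtimes\Rep(B)$. There the functor is exactly the natural map
\[
\Hom_A(U,U')\otimes_\kk\Hom_B(V,V')\longrightarrow\Hom_{A\otimes B}(U\otimes V,U'\otimes V'),
\]
so everything comes down to showing this map is bijective for $U,U'\in\Rep(A)$ and $V,V'\in\Rep(B)$.

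To do that I would factor diagonal $A\otimes B$-linearity into $A$-linearity together with $B$-linearity, using that $A\otimes 1$ and $1\otimes B$ generate $A\otimes B$. Since $V,V'$ are finitely generated projective over $\kk$, iterating the tensor--hom adjunction in the $\kk$-variable produces a natural isomorphism $\Hom_A(U\otimes V,U'\otimes V')\cong\Hom_A(U,U')\otimes_\kk\Hom_\kk(V,V')$ under which the residual $1\otimes B$-action becomes the conjugation action of $B$ on the factor $\Hom_\kk(V,V')$; taking $B$-invariants then identifies $\Hom_{A\otimes B}(U\otimes V,U'\otimes V')$ with $(\Hom_A(U,U')\otimes_\kk\Hom_\kk(V,V'))^B$. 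The main obstacle, and the only place the hypotheses are really used, is to commute these $B$-invariants (a kernel) past the tensor factor $\Hom_A(U,U')$: the projectivity hypothesis makes $\Hom_A(U,U')$ flat over $\kk$, so $\Hom_A(U,U')\otimes_\kk-$ is left exact and
\[
\bigl(\Hom_A(U,U')\otimes_\kk\Hom_\kk(V,V')\bigr)^B\cong\Hom_A(U,U')\otimes_\kk\Hom_B(V,V'),
\]
while hom-finiteness keeps all modules finitely generated and hence inside $\Rep$. Unwinding the isomorphisms shows the composite agrees with the natural map, proving (2).

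Finally, for (3) I would note that over a field separability forces $A$ and $B$ to be finite-dimensional and semisimple, so $\Rep(A)$ and $\Rep(B)$ are automatically hom-finite and projective and (2) applies, making the restricted functor $F\colon\Rep(A)\barboxtimes\Rep(B)\to\Rep(A\otimes B)$ fully faithful. Since $F$ is fully faithful with pseudo-abelian source, its essential image is closed under direct sums and direct summands, so it suffices to show every simple $A\otimes B$-module is a summand of some $U\otimes V=F(U\boxtimes V)$. Here I would use that a tensor product of separable $\kk$-algebras is separable, so $A\otimes B$ is semisimple; decomposing the regular module from the decompositions of $A$ and $B$ into simples as $A\otimes B\cong\bigoplus_{i,j}(S_i\otimes_\kk T_j)^{\oplus n_im_j}$ exhibits every simple $A\otimes B$-module as a summand of some $S_i\otimes_\kk T_j=F(S_i\boxtimes T_j)$. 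Because $\Rep(A\otimes B)$ is semisimple every object is a finite direct sum of simples, hence lies in the essential image, and together with full faithfulness this shows $F$ is an equivalence.
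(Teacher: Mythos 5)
Your proposal is correct. Parts (1) and (2) are essentially the paper's own argument in different packaging: the paper computes
$\Hom_{A\otimes B}(U\otimes V,U'\otimes V')\simeq\Hom_B(V,\Hom_A(U,U'\otimes V'))\simeq\Hom_B(V,\Hom_A(U,U')\otimes V')\simeq\Hom_A(U,U')\otimes\Hom_B(V,V')$
by two tensor--hom swaps, using exactly that $V'$ and $M\coloneqq\Hom_A(U,U')$ are finitely generated projective over $\kk$; your ``$A$-linearity first, then $B$-invariants'' version folds the same swaps into an invariants computation. One caution on your phrasing there: identifying the elementwise $B$-invariants of $M\otimes\Hom_\kk(V,V')$ with $M\otimes\Hom_B(V,V')$ does not follow from left exactness of $M\otimes{-}$ alone when $B$ is infinitely generated. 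Invariance is the kernel of a map into $\Hom_\kk(B\otimes V,V')$, and after tensoring one also needs injectivity of the canonical map $M\otimes\Hom_\kk(B\otimes V,V')\to\Hom_\kk(B\otimes V,M\otimes V')$, which can fail for merely flat $M$ (e.g.\ $M=\mathbb{Q}$ over $\ZZ$ against infinite products). It does hold here because the hypotheses make $M$ finitely generated projective, for which $M\otimes\Hom_\kk(X,Y)\to\Hom_\kk(X,M\otimes Y)$ is an isomorphism for all $X$ --- so it is the full hom-finite-and-projective hypothesis doing the work, not flatness; with that repaired your (2) goes through.

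Part (3) is where you take a genuinely different route. The paper reduces essential surjectivity to irreducibles, settles the algebraically closed case by the classical fact $I(A\otimes B)=\{U\otimes V\mid U\in I(A),\,V\in I(B)\}$, and handles a general field by a descent lemma along $\kk\to\overline{\kk}$ (each irreducible of $\overline{C}$ lies over a unique irreducible of $C$), applied to $A$, $B$ and $A\otimes B$. You instead use semisimplicity of the separable algebra $A\otimes B$ together with the regular-module decomposition $A\otimes B\simeq\bigoplus_{i,j}(S_i\otimes T_j)^{\oplus n_im_j}$, so that every simple $A\otimes B$-module is a direct summand of some $S_i\otimes T_j$; combined with the fact that the essential image of a fully faithful functor with pseudo-abelian source is closed under direct sums and summands, this avoids base change entirely. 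Both arguments rest on separability of $A\otimes B$ (this is where separability, rather than mere semisimplicity, is genuinely needed over non-perfect fields), but your version is shorter and more self-contained, while the paper's yields the descent statement relating $I(C)$ and $I(\overline{C})$ as a by-product.
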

\begin{proof}
(1) Obvious.

(2) Let $U,U'\in\Rep(A)$ and $V,V'\in\Rep(B)$.
By the assumptions $V'$ and $\Hom_A(U,U')$ are finitely generated and projective over $\kk$,
thus we get
\begin{align*}
\Hom_{A\otimes B}(U\otimes V,U'\otimes V')
&\simeq\Hom_B(V,\Hom_A(U,U'\otimes V'))\\
&\simeq\Hom_B(V,\Hom_A(U,U')\otimes V')\\
&\simeq\Hom_A(U,U')\otimes\Hom_B(V,V').
\end{align*}

(3) Since the functor is fully faithful by (2),
it suffices to prove that the functor is essentially surjective.
For a separable $\kk$-algebra $C$, let $I(C)$ be the set of all
finite dimensional irreducible $C$-modules up to isomorphism.
Since $A\otimes B$ is also separable,
it suffices to show that the image of the functor contains $I(A\otimes B)$.
If $\kk$ is algebraically closed
the statement follows from the well known fact
\begin{align*}
I(A\otimes B)=\{U\otimes V\;|\;U\in I(A),V\in I(B)\}.
\end{align*}

For a general field $\kk$, let $\overline{\kk}$ be the algebraic closure of $\kk$
and let us denote a field extension ${\bullet}\otimes\overline{\kk}$ by $\overline{\bullet}$.
We use the next fact to prove the statement.
The proof is easy and we omit it.
\begin{lemma}
Let $C$ be a separable $\kk$-algebra.
Then for each $L\in I(\overline{C})$ there exists unique $L'\in I(C)$
such that $L$ appears in the irreducible components of $\overline{L'}$.
\end{lemma}
By the lemma for $A$ and $B$ we get that
each object in $ I(\overline{A\otimes B})$ is a direct summand of
$\overline{U\otimes V}$
for some $U\in I(A)$ and $V\in I(B)$.
Using the lemma for $A\otimes B$ again,
we conclude the statement.
\end{proof}

We interpret these results as follows.
Using the data of representation categories of $A$ and $B$
we can imitate that of $A\otimes B$ to some extent,
even if we do not know about the base algebras $A$ and $B$ themselves.
So we regard $\Rep(A)\barboxtimes\Rep(B)$
as a replica of $\Rep(A\otimes B)$ for any $A$ and $B$.

\subsection{Group action on Category}
Suppose that a group $G$ acts on a $\kk$-algebra $A$
by $\kk$-linear automorphisms of algebra.
For the consistency of notations we denote the action
of $g\in G$ by conjugation $a\in A\mapsto gag^{-1}\in A$.
Then for each $g\in G$ and an $A$-module $U$,
we can define the twisted $A$-module
\begin{gather*}
g\cdot U\coloneqq\{\text{symbol } g\cdot u\;|\;u\in U\}
\end{gather*}
whose $A$-action is defined by $a(g\cdot u)\coloneqq g\cdot(g^{-1}ag)u$.
This defines a \term{$G$-action} on the $\kk$-linear category $\Mod(A)$
described below.

\begin{definition}
Let $G$ be a group and $\cC$ a $\kk$-linear category.
An \term{action} $\cM\colon G\curvearrowright\cC$ is a collection of
$\kk$-linear endofunctors $\cM_g\colon U\mapsto g\cdot U$ on $\cC$ for all $g\in G$
equipped with functorial isomorphisms
$\mu_\cM^{g,h}(U)\colon g\cdot(h\cdot U)\simeq gh\cdot U$ for each $g,h\in G$ and
$\iota_\cM(U)\colon U\simeq 1\cdot U$ for the unit element $1\in G$
such that the diagrams below commute:
\begin{align*}
\xymatrix{
g\cdot(h\cdot(k\cdot U)) \ar[r]^-{g\cdot\mu_\cM^{h,k}(U)} \ar[d]_-{\mu_\cM^{g,h}(k\cdot U)}&
g\cdot(hk\cdot U) \ar[d]^-{\mu_\cM^{g,hk}(U)}\\
gh\cdot(k\cdot U) \ar[r]_-{\mu_\cM^{gh,k}(U)}&
ghk\cdot U\rlap{,}\\
}&&
\xymatrix{
g\cdot U \ar[r]^-{g\cdot\iota_\cM(U)} \ar[d]_-{\iota_\cM(g\cdot U)} \ar@{=}[rd]&
g\cdot(1\cdot U) \ar[d]^-{\mu_\cM^{g,1}(U)}\\
1\cdot(g\cdot U) \ar[r]_-{\mu_\cM^{1,g}(U)}&
g\cdot U\rlap{.}
}
\end{align*}
\end{definition}

For example, on any $\kk$-linear category $\cC$
we can define the \term{trivial action} of $G$ by $\cM_g\coloneqq\Id_\cC$.
If groups $G$ and $H$ act on $\kk$-linear categories $\cC$ and $\cD$ respectively,
$G^\op\times H$ and $G\times H$ naturally act on
$\HOM_\kk(\cC,\cD)$ and $\cC\boxtimes\cD$ respectively.

\begin{definition}
Let $G$ be a group and $\cC$ be a $\kk$-linear category
on which $G$ acts.
\begin{enumerate}
\item A \term{$G$-invariant object} $U\in\cC$ is an object equipped with
a collection of isomorphisms $\kappa_U^g\colon g\cdot U\simeq U$ for all $g\in G$
such that the diagrams below commute:
\begin{align*}
\xymatrix{
g\cdot(h\cdot U) \ar[r]^-{g\cdot\kappa_U^h} \ar[d]_-{\mu_\cM^{g,h}(U)}&
g\cdot U \ar[r]^-{\kappa_U^g}&
U \ar@{=}[d]\\
gh\cdot U \ar[rr]_-{\kappa_U^{gh}}&&
U\rlap{,}\\
}&&
\xymatrix{
U \ar[d]_-{\iota_\cM(U)} \ar@{=}[rd]\\
1\cdot U \ar[r]_-{\kappa_U^{1}}&
U\rlap{.}
}
\end{align*}
\item A \term{$G$-invariant morphism} $\varphi\colon U\to V$ between $G$-invariant objects
is a morphism such that the diagram below commutes:
\begin{gather*}
\xymatrix{
U \ar[r]^-{\varphi} \ar[d]_-{\kappa_V^g}&
V \ar[d]^-{\kappa_U^g}\\
g\cdot U \ar[r]_-{g\cdot\varphi}&
g\cdot V\rlap{.}
}
\end{gather*}
\item We denote by $\cC^G$ the $\kk$-linear category consisting of $G$-invariant objects and morphisms.
\end{enumerate}
\end{definition}


\begin{remark}
Although we do not use it explicitly in this paper,
one can easily define the 2-category $G\text{-}\Cat_\kk$
consisting of $\kk$-linear categories with $G$-actions
along with \term{$G$-equivalent functors} and \term{$G$-equivalent transformations}.
Taking invariants $\cC\mapsto\cC^G$ is a 2-functor
$G\text{-}\Cat_\kk\to\Cat_\kk$ and this is the right adjoint of
the 2-functor which attaches
the trivial $G$-action to a given category.
\end{remark}

Now let $G$ be a group acts on a $\kk$-algebra $A$.
Recall that the semidirect product $A\rtimes G$ of $A$ and $G$
is a $\kk$-algebra which is isomorphic to $A\otimes\kk[G]$ as $\kk$-module
and its product is defined by $(a\otimes g)(b\otimes h)\coloneqq a(gbg^{-1})\otimes gh$
for $a,b\in A$ and $g,h\in G$.
We see here that making the semidirect product of an algebra
is exactly taking the invariants of a category.

\begin{proposition}
\label{prop:mimic_semi}
For $G$ and $A$ as above,
there are equivalences $\Mod(A)^G\eqmap\Mod(A\rtimes G)$
and $\Rep(A)^G\eqmap\Rep(A\rtimes G)$.
\end{proposition}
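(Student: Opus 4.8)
The plan is to construct explicit, mutually inverse functors between $\Mod(A)^G$ and $\Mod(A\rtimes G)$ using the concrete description of modules over the semidirect product. First I would record the basic reformulation: since $A\rtimes G\simeq A\otimes\kk[G]$ as a $\kk$-module with the twisted product $(a\otimes g)(b\otimes h)=a(gbg^{-1})\otimes gh$, giving an $A\rtimes G$-module is the same as giving a $\kk$-module $U$ equipped with an $A$-action (restriction along $a\mapsto a\otimes 1$) and a $\kk$-linear $G$-action (restriction along $g\mapsto 1\otimes g$) which are compatible in the sense that $g(au)=(gag^{-1})(gu)$ for all $g\in G$, $a\in A$, $u\in U$. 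This translation is what makes the two sides comparable, since $A\rtimes G$ is generated by the images of $A$ and $G$ subject to exactly this relation.

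Given a $G$-invariant object $(U,(\kappa_U^g)_{g})$ of $\Mod(A)^G$, I would keep its underlying $A$-module and define a $G$-action on $U$ by $g\ast u\coloneqq\kappa_U^g(g\cdot u)$. The group-action axioms for $\ast$ come straight from the two commuting diagrams in the definition of a $G$-invariant object: the first gives $g\ast(h\ast u)=(gh)\ast u$ and the second gives $1\ast u=u$. The compatibility relation is the one real computation: by the definition of the $A$-action on the twisted module one has $(gag^{-1})(g\cdot u)=g\cdot(au)$ inside $g\cdot U$, and applying the $A$-homomorphism $\kappa_U^g$ to this identity yields $g\ast(au)=(gag^{-1})(g\ast u)$. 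Hence $U$ becomes an $A\rtimes G$-module.

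Conversely, from an $A\rtimes G$-module $U$ I would restrict to an $A$-module and set $\kappa_U^g\colon g\cdot U\to U$, $g\cdot u\mapsto gu$. That each $\kappa_U^g$ is an $A$-homomorphism is precisely the same compatibility identity read in the other direction, and the $G$-invariant-object axioms for $(\kappa_U^g)_g$ are just the statement that $g\mapsto(u\mapsto gu)$ is a genuine group action. A direct check shows the two constructions are mutually inverse on objects, and that a morphism $\varphi$ is $G$-invariant exactly when it is both $A$-linear and $G$-equivariant, i.e.\ $A\rtimes G$-linear. This produces the equivalence $\Mod(A)^G\eqmap\Mod(A\rtimes G)$ (indeed an isomorphism of categories). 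For the $\Rep$ statement I would simply note that both constructions leave the underlying $\kk$-module of $U$ untouched, so the property of being finitely generated and projective over $\kk$ is preserved in both directions, and the equivalence restricts to $\Rep(A)^G\eqmap\Rep(A\rtimes G)$.

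The main obstacle is entirely one of bookkeeping rather than of idea: one must track the direction of the twist carefully, since the $A$-action on $g\cdot U$ involves $g^{-1}ag$ whereas the semidirect product uses $gag^{-1}$. The whole argument hinges on lining these up so that ``$\kappa_U^g$ is an $A$-homomorphism'' becomes literally the module compatibility $g(au)=(gag^{-1})(gu)$; once this single correspondence is fixed, the associativity and unitality diagrams, the inverse-functor verifications, and the identification of morphisms are all routine diagram chases.
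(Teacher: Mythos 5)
Your proposal is correct and follows essentially the same route as the paper: the paper defines the $A\rtimes G$-action by $(a\otimes g)u\coloneqq a\,\kappa_U^g(g\cdot u)$ and inverts it via the natural $A$-isomorphisms $g\cdot U\simeq U$, $g\cdot u\mapsto(1\otimes g)u$, which are exactly your two constructions. You merely spell out the details the paper leaves as ``easy to check'' --- the compatibility $g(au)=(gag^{-1})(gu)$, the identification of $G$-invariant morphisms with $A\rtimes G$-linear maps, and the observation that the underlying $\kk$-module is untouched so the equivalence restricts to $\Rep$.
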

\begin{proof}
For each $G$-invariant $A$-module $U$,
using isomorphisms $\kappa_U^g\colon g\cdot U\simeq U$,
we can define a $A\rtimes G$ action on it by
$(a\otimes g)u\coloneqq a\:\kappa_U^g(g\cdot u)$.
On the other hand, for each ($A\rtimes G$)-module $U$,
there are natural $A$-module isomorphisms $g\cdot U\simeq U;g\cdot u\mapsto(1\otimes g)u$.
It is easy to check that they are well-defined and two functors above are inverse to each other.
\end{proof}

Now suppose that a group $G$ acts on a $\kk$-linear category $\cC$.
To create $G$-invariant objects in $\cC$, we can use
the technique of \term{restriction} and \term{induction} as we do
for ordinary representations of groups.
\begin{definition}
Let $H\subset G$ be a group and its subgroup and
$\cC$ a $\kk$-linear category on which $G$ acts.
We denote by $\Res^G_H\colon\cC^G\to\cC^H$
the obvious forgetful functor and call it the \term{restriction functor}.
If it has the left adjoint, we denote it by $\Ind^G_H\colon\cC^H\to\cC^G$
and call it the \term{induction functor}.
\end{definition}
\begin{proposition}
\label{prop:ind_functor}
Let $G,H$ and $\cC$ be as above.
If $\#(G/H)<\infty$ and $\cC$ is additive,
then the induction functor exists.
In this case, $\Ind^G_H$ is also the right adjoint of $\Res^G_H$.
\end{proposition}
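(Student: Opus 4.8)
The plan is to build the induction functor from the usual formula $\Ind^G_H V=\bigoplus_{gH\in G/H} g\cdot V$, and then to observe that the finiteness hypothesis together with additivity forces this coproduct to be a biproduct, which is exactly what makes $\Ind^G_H$ simultaneously a left and a right adjoint of $\Res^G_H$.

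First I would fix a set of representatives $r_1,\dots,r_n$ of the cosets $G/H$, with $r_1=1$, where $n=\#(G/H)<\infty$. Since $\cC$ is additive the object $\Ind^G_H V\coloneqq\bigoplus_{i=1}^n r_i\cdot V$ exists. To promote it to a $G$-invariant object I would define, for each $g\in G$, an isomorphism $\kappa^g\colon g\cdot\Ind^G_H V\simeq\Ind^G_H V$ as follows. Left multiplication by $g$ permutes the cosets, so there is a permutation $\pi=\pi_g\in\fS_n$ and unique elements $h_i\in H$ with $gr_i=r_{\pi(i)}h_i$. On the $i$-th summand I compose the canonical isomorphism $g\cdot(r_i\cdot V)\simeq(gr_i)\cdot V=(r_{\pi(i)}h_i)\cdot V$ coming from $\mu_\cM$, then $(r_{\pi(i)}h_i)\cdot V\simeq r_{\pi(i)}\cdot(h_i\cdot V)$ again from $\mu_\cM$, and finally $r_{\pi(i)}\cdot(h_i\cdot V)\simeq r_{\pi(i)}\cdot V$ obtained by applying $\cM_{r_{\pi(i)}}$ to the $H$-invariant structure map $\kappa_V^{h_i}\colon h_i\cdot V\simeq V$. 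Because $\pi$ is a bijection, summing these over $i$ yields the desired isomorphism onto $\bigoplus_i r_i\cdot V$. Defining $\Ind^G_H$ on morphisms summand-wise then makes it a functor $\cC^H\to\cC^G$.

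Next I would check that the collection $(\kappa^g)_{g\in G}$ satisfies the two coherence diagrams in the definition of a $G$-invariant object, and that $\Ind^G_H V$ is, up to canonical isomorphism, independent of the chosen representatives. Both are routine diagram chases that reduce to the associativity and unitality axioms of the action $\cM$ together with the cocycle identities for $\kappa_V$, so I would only indicate the key cancellations rather than grind through them. For the left adjunction I would then exhibit the natural bijection directly: giving a $G$-invariant morphism $\varphi\colon\Ind^G_H V\to U$ amounts to giving its components $\varphi_i\colon r_i\cdot V\to U$, and the $G$-invariance of $\varphi$ forces each $\varphi_i$ to be determined by the single component $\varphi_1\colon r_1\cdot V=1\cdot V\simeq V\to U$, which one checks is precisely an $H$-invariant morphism $V\to\Res^G_H U$. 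This yields a natural isomorphism $\Hom_{\cC^G}(\Ind^G_H V,U)\simeq\Hom_{\cC^H}(V,\Res^G_H U)$, so $\Ind^G_H$ is left adjoint to $\Res^G_H$.

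Finally, to get that the \emph{same} functor is also the right adjoint, the key observation—which I expect to be the real content rather than the bookkeeping—is that in an additive category a finite coproduct is simultaneously a product. Thus $\Ind^G_H V=\bigoplus_i r_i\cdot V$ also represents the coinduction $\prod_{gH} g\cdot V$, the standard right adjoint of a restriction functor; concretely I would run the dual argument, sending a $G$-invariant morphism $U\to\Ind^G_H V$ to its projection onto the $r_1=1$ factor to obtain a natural isomorphism $\Hom_{\cC^G}(U,\Ind^G_H V)\simeq\Hom_{\cC^H}(\Res^G_H U,V)$. The one point to verify carefully is that the biproduct's projections and injections intertwine the $G$-invariant structures in the way required, which again follows from the explicit construction of $\kappa^g$ above.
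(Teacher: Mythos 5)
Your proposal is correct and follows essentially the same route as the paper: the same direct-sum construction $\Ind^G_H(V)=\bigoplus_{g_i\in G/H}g_i\cdot V$ with the $G$-invariant structure defined via the permutation $hg_i=g_{i'}h^{(i)}$ of coset representatives. Your explicit biproduct argument for the right adjunction is just an unwound version of the paper's one-line appeal to the opposite category---finite direct sums being self-dual is exactly what makes that trick work---so the two proofs coincide in substance.
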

\begin{proof}
First let us choose representatives of the left coset $G/H$, namely
$G/H=\{g_1,\dots,g_l\}$.
For $U\in\cC^H$ we define an object $\Ind^G_H(U)\in\cC$ by
\begin{gather*}
\Ind^G_H(U)\coloneqq\bigoplus_{i=1,\dots,l}g_i\cdot U.
\end{gather*}
Take any $h\in G$.
For each $i\in\{1,\dots,l\}$, there exist unique $h^{(i)}\in H$ and $i'\in\{1,\dots,l\}$
such that $hg_i=g_{i'}h^{(i)}$.
Thus there is an isomorphism
\begin{gather*}
h\cdot\Ind^G_H(U)
\simeq \bigoplus_{i=1,\dots,l}h\cdot(g_i\cdot U)
\simeq \bigoplus_{i=1,\dots,l}g_{i'}\cdot(h^{(i)}\cdot U)
\simeq \bigoplus_{i=1,\dots,l}g_{i'}\cdot U
\simeq \Ind^G_H(U).
\end{gather*}
These isomorphisms define a structure of $G$-invariant object on $\Ind^G_H(U)$.
It is easy to check that this construction is functorial and gives the left adjoint of $\Res^G_H$.
The last statement follows from considering the opposite category.
\end{proof}
\begin{corollary}
\label{cor:ind_generator}
Let $\cC$ be a $\kk$-linear category on which a group $G$ acts.
Suppose that $\#G<\infty$ and $\#G\in\kk$ is invertible.
Then all objects of the form $\Ind^G_{\{1\}}(U)$ for $U\in\cC$
generate a pseudo-abelian category $\Ps(\cC)^G$.
\end{corollary}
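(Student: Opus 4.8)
The goal is to show that every object $V\in\Ps(\cC)^G$ is a direct summand of a finite direct sum of objects $\Ind^G_{\{1\}}(U)$ with $U\in\cC$, which is exactly the assertion that these objects generate $\Ps(\cC)^G$. The plan is to factor the identity of $V$ through an induced object by an averaging argument; this is the place where invertibility of $\#G$ enters.

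First I would fix the ambient structures. Since $\Ps$ is a 2-functor, the action $\cM\colon G\curvearrowright\cC$ extends to an action $G\curvearrowright\Ps(\cC)$, so that $\Ps(\cC)^G$ is defined; moreover $\Ps(\cC)^G$ is pseudo-abelian, because direct sums of invariant objects are invariant and a $G$-invariant idempotent splits in $\Ps(\cC)$ with both summands inheriting an invariant structure. As $\Ps(\cC)$ is pseudo-abelian, hence additive, and $\#(G/\{1\})=\#G<\infty$, Proposition~\ref{prop:ind_functor} supplies the induction functor $\Ind\coloneqq\Ind^G_{\{1\}}\colon\Ps(\cC)\to\Ps(\cC)^G$, which is at once the left and the right adjoint of the restriction $\Res\colon\Ps(\cC)^G\to\Ps(\cC)$.

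Next comes a reduction. Because $\Ps(\cC)$ is generated by $\cC$, the object $\Res(V)$ is a direct summand of some $U_1\oplus\dots\oplus U_m$ with $U_i\in\cC$. The functor $\Ind$, being a left (and right) adjoint, is additive, so it preserves direct sums and retracts; hence $\Ind(\Res(V))$ is a direct summand of $\bigoplus_i\Ind^G_{\{1\}}(U_i)$. Since a summand of a summand is again a summand, it therefore suffices to prove that $V$ itself is a direct summand of $\Ind(\Res(V))$.

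For the main step, write $\Ind(\Res(V))=\bigoplus_{g\in G}g\cdot V$ and let $\iota_g$, $\pi_g$ be the inclusion and projection of the $g$-th summand. Using the invariance isomorphisms $\kappa_V^g\colon g\cdot V\eqmap V$ I would define morphisms $s\colon V\to\Ind(\Res(V))$ and $r\colon\Ind(\Res(V))\to V$ by
\begin{align*}
s\coloneqq\sum_{g\in G}\iota_g\circ(\kappa_V^g)^{-1},
&&
r\coloneqq\frac{1}{\#G}\sum_{g\in G}\kappa_V^g\circ\pi_g.
\end{align*}
From $\pi_g\circ\iota_h=\delta_{gh}\id$ one computes $r\circ s=\frac{1}{\#G}\sum_{g}\kappa_V^g\circ(\kappa_V^g)^{-1}=\id_V$, where the availability of $r$ rests on $\#G\in\kk$ being invertible; equivalently $s$ and $r$ are, up to the scalar $\#G$, the unit of $\Res\dashv\Ind$ and the counit of $\Ind\dashv\Res$ evaluated at $V$. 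This exhibits $V$ as a retract of $\Ind(\Res(V))$ and completes the argument. The one genuinely technical point, which I expect to be the main obstacle, is verifying that $s$ and $r$ are $G$-invariant morphisms: this is a diagram chase comparing the invariant structure on $\Ind(\Res(V))$ produced in the proof of Proposition~\ref{prop:ind_functor} (left translation permuting the summands) with the cocycle and unit conditions satisfied by the family $\kappa_V^g$.
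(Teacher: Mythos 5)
Your proof is correct and takes essentially the same route as the paper: the paper likewise exhibits an arbitrary $V\in\Ps(\cC)^G$ as a retract of $\Ind^G_{\{1\}}\Res^G_{\{1\}}(V)$ via the unit/counit morphisms $i,p$ with $p\circ i=(\#G)\id_V$, splitting the idempotent $(\#G)^{-1}\,i\circ p$ --- your $s$ and $r$ are the same maps with the scalar $1/\#G$ absorbed into $r$. The only differences are expository: you spell out the reduction from $\Res(V)\in\Ps(\cC)$ to a summand of objects induced from $\cC$ (using additivity of $\Ind$) and flag the $G$-invariance verification, both of which the paper leaves implicit.
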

\begin{proof}
Take an arbitrary $U\in\Ps(\cC)^G$. There are morphisms in $\Ps(\cC)^G$
\begin{gather*}
U\xrightarrow{i}\Ind^G_{\{1\}}\Res^G_{\{1\}}(U)\xrightarrow{p} U
\end{gather*}
induced by $g\cdot U\simeq g$ for all $g\in G$.
Since $p\circ i=(\#G)\id_U$, the idempotent $(\#G)^{-1}i\circ p$
has its image in $\Ind^G_{\{1\}}\Res^G_{\{1\}}(U)$ isomorphic to $U$.
\end{proof}

\subsection{Wreath Product of Algebra}
Now we consider the main topic of this section,
representation categories of wreath products.

\begin{notation}
Let $X$ be a finite set.
We denote by $P(X)$ the set of
all equivalence relations on $X$,
and for $p\in P(X)$ we write $x\sim_p y$
if $x$ and $y$ are equivalent with respect to $p$.
There is a natural bijection from the set of partitions of $X$ to $P(X)$:
\begin{align*}
X=X_1\sqcup\dots\sqcup X_l
\quad\bijection\quad
x\sim_p y\iff\text{$x,y$ are in the same $X_i$.}
\end{align*}
So we call $p\in P(X)$ a \term{partition} and represent by
$p=\{X_1,\dots,X_l\}$
that each $X_i$ is an equivalence class of $X$ by $p$.
We denote by $\#p$ the number of its equivalence classes
and call it the \term{length} of $p$.

$P(X)$ is partially ordered with respect to strength of relations.
For two partitions $p,q\in P(X)$ we write $p\leq q$ if $x\sim_q y$ implies $x\sim_p y$.
We also say that the partition $q$ is a \term{refinement} of $p$ when $p\leq q$.
The \term{common refinement} $p\wedge q\in P(X)$ of
two partitions $p,q\in P(X)$ is defined by
\begin{align*}
x\sim_{p\wedge q}y \iff x\sim_p y~~\text{and}~~x\sim_q y.
\end{align*}
Beware that it is the \term{least upper bound} of $p$ and $q$,
not the \term{greatest lower bound} in the language of partially ordered set.

We denote by $\fS_X$ the group of all bijections from $X$ to $X$ itself
and call the \term{symmetric group} on $X$.
For $p=\{X_1,\dots,X_l\}\in P(X)$, we define the subgroup $\fS_p\subset\fS_X$ by
\begin{align*}
\fS_p&\coloneqq\{g\in\fS_X\;|\;x\sim_p g(x)\text{ for all } x\in X\}\\
&\simeq\fS_{X_1}\times\dots\times\fS_{X_l}.
\end{align*}
It is called a \term{Young subgroup} of $\fS_X$.
$\fS_X$ also acts on $P(X)$ as follows:
for $g\in\fS_X$ and $p\in P(X)$, $g(p)\in P(X)$ is a partition such that
\begin{align*}
x\sim_{g(p)}y \iff g^{-1}(x)\sim_p g^{-1}(y).
\end{align*}
If $d\in\NN$ and $X=\{1,\dots,d\}$, we simply denote $P(X)$ and $\fS_X$
by $P(d)$ and $\fS_d$ respectively.
\end{notation}

\begin{definition}
For a $\kk$-algebra $A$ and $d\in\NN$,
the \term{wreath product} $A\wr\fS_d$ of $A$ by $\fS_d$
is the semidirect product $A^{\otimes d}\rtimes\fS_d$
where the symmetric group $\fS_d$ acts on the $d$-fold tensor product $A^{\otimes d}$
by permutation of terms.
More explicitly, $A\wr\fS_d$ is the $\kk$-algebra
which is isomorphic to $A^{\otimes d}\otimes\kk[\fS_d]$ as $\kk$-module
and its product is defined by
\begin{align*}
(a_1\otimes\dots\otimes a_d\otimes g)(b_1\otimes\dots\otimes b_d\otimes h)
=(a_1b_{g^{-1}(1)})\otimes\dots\otimes(a_db_{g^{-1}(d)})\otimes g h
\end{align*}
for $a_1,\dots,a_d,b_1,\dots,b_d\in A$ and $g,h\in\fS_d$.
For $p\in P(d)$, let $A\wr\fS_p\coloneqq A^{\otimes d}\rtimes\fS_p$.
Obviously it is a $\kk$-subalgebra of $A\wr\fS_d$.
\end{definition}

Let us create representation categories of wreath products of algebras
in the language of categories.
We already know what should it be by the preceding arguments.
\begin{definition}
Let $d\in\NN$ and $\cC$ be a $\kk$-linear category.
We denote by $\cW_d(\cC)\coloneqq(\cC^{\barboxtimes d})^{\fS_d}$
the category of $\fS_d$-invariants in
the $d$-fold tensor product category $\cC^{\barboxtimes d}$
where the symmetric group $\fS_d$ acts on it by permutation of terms.
This induces a 2-functor $\cW_d\colon\Cat_\kk\to\PsCat_\kk$.
\end{definition}
Note that the $\fS_d$-action on $\Mod(A^{\otimes d})$ induced by
$\fS_d\curvearrowright A^{\otimes d}$ coincides with that we used
in the definition above.
Combining Propositions~\ref{prop:mimic_tensor} and \ref{prop:mimic_semi},
we obtain the next results.
\begin{proposition}
\label{prop:mimic_wr}
Let $A$ be a $\kk$-algebra.
\begin{enumerate}
\item There is a canonical functor $\cW_d(\Mod(A))\to\Mod(A\wr\fS_d)$.
\item If $\Rep(A)$ is hom-finite and projective, then
the restriction $\cW_d(\Rep(A))\to\Rep(A\wr\fS_d)$
is fully faithful.
\item Suppose that $\kk$ is a field.
If $A$ is a separable $\kk$-algebra,
the restricted functor above gives a category equivalence.
\end{enumerate}
\end{proposition}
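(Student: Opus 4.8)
The plan is to assemble the two previous results: iterate Proposition~\ref{prop:mimic_tensor} to treat the $d$-fold tensor product $\cC^{\barboxtimes d}$ against $A^{\otimes d}$, and then invoke Proposition~\ref{prop:mimic_semi} to pass from $\fS_d$-invariants to modules over the semidirect product $A^{\otimes d}\rtimes\fS_d=A\wr\fS_d$. Concretely, I would first produce a canonical $\kk$-linear functor
\begin{gather*}
T_d\colon\Mod(A)^{\barboxtimes d}\longrightarrow\Mod(A^{\otimes d}),\qquad
U_1\boxtimes\dots\boxtimes U_d\longmapsto U_1\otimes\dots\otimes U_d,
\end{gather*}
with $A^{\otimes d}$ acting diagonally, by iterating the functor of Proposition~\ref{prop:mimic_tensor}(1). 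Since $\barboxtimes$ is a $2$-bifunctor and $\Mod(A^{\otimes d})$ is pseudo-abelian, the iteration is unambiguous: one peels off one factor at a time, writing $T_d$ as the composite of $\id_{\Mod(A)}\barboxtimes T_{d-1}$ with the canonical functor $\Mod(A)\barboxtimes\Mod(A^{\otimes(d-1)})\to\Mod(A^{\otimes d})$.

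The key point --- and the place where the construction is really pinned down --- is that $T_d$ is $\fS_d$-equivariant for the permutation action on the domain (permuting the $\barboxtimes$-factors) and the action on the codomain induced by $\fS_d\curvearrowright A^{\otimes d}$. This is exactly the compatibility recorded in the Note preceding the statement; I would verify it by checking that the structural isomorphisms $\mu_\cM$ and $\iota_\cM$ of the two actions intertwine under $T_d$, which amounts to the observation that permuting tensor factors of modules and permuting the algebra factors of $A^{\otimes d}$ agree. Granting this, taking $\fS_d$-invariants yields $T_d^{\,\fS_d}\colon\cW_d(\Mod(A))=(\Mod(A)^{\barboxtimes d})^{\fS_d}\to\Mod(A^{\otimes d})^{\fS_d}$, and composing with the equivalence $\Mod(A^{\otimes d})^{\fS_d}\eqmap\Mod(A^{\otimes d}\rtimes\fS_d)=\Mod(A\wr\fS_d)$ of Proposition~\ref{prop:mimic_semi} gives the canonical functor of (1).

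For (2) and (3) I would upgrade the two inputs. Running the same peel-off induction with Proposition~\ref{prop:mimic_tensor}(2) in place of (1) shows that $T_d$ restricts to a fully faithful functor $\Rep(A)^{\barboxtimes d}\to\Rep(A^{\otimes d})$: at each step I keep the single factor $\Rep(A)$ as the first argument (which is hom-finite and projective by hypothesis --- the only place the hypothesis is used), tensor the inductively fully faithful $T_{d-1}$ with $\id_{\Rep(A)}$ (full faithfulness being preserved under $\barboxtimes\,\id$ and under $\Ps$), and compose with the fully faithful functor of Proposition~\ref{prop:mimic_tensor}(2). For (3), over a field the algebra $A^{\otimes d}$ is again separable, so the same induction with Proposition~\ref{prop:mimic_tensor}(3) makes $T_d$ a category equivalence on the $\Rep$-level. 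In both cases I then take $\fS_d$-invariants and compose with Proposition~\ref{prop:mimic_semi} for $\Rep$; here I use the soft fact that the $2$-functor $(-)^{\fS_d}$ sends an $\fS_d$-equivariant fully faithful functor (resp.\ equivalence) to a fully faithful functor (resp.\ equivalence), which for full faithfulness is immediate because a morphism in $\cC^{\fS_d}$ is simply an $\fS_d$-invariant morphism of $\cC$ and the equivariance data identify the invariant subspaces.

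I expect the main obstacle to be the equivariance check for $T_d$: one must be careful that the permutation action built into the definition of $\cW_d$ (permuting abstract $\boxtimes$-slots, together with the coherence isomorphisms of the $\fS_d$-action) is genuinely carried by $T_d$ to the action coming from $\fS_d\curvearrowright A^{\otimes d}$, including the associativity and unit coherences $\mu_\cM$ and $\iota_\cM$. The remaining bookkeeping --- the peel-off induction and the preservation of full faithfulness and equivalences under $\barboxtimes\,\id$, $\Ps$, and $(-)^{\fS_d}$ --- is routine once this is in hand.
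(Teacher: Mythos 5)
Your proof is correct and follows exactly the paper's route: the paper gives no separate argument for this proposition beyond the sentence ``Combining Propositions~\ref{prop:mimic_tensor} and \ref{prop:mimic_semi}, we obtain the next results,'' together with the preceding note that the $\fS_d$-action on $\Mod(A^{\otimes d})$ induced by $\fS_d\curvearrowright A^{\otimes d}$ agrees with the permutation action used to define $\cW_d$ --- which is precisely the equivariance check you identify as the key point. Your peel-off induction, keeping $\Rep(A)$ as the first factor so that only its hom-finiteness and projectivity is ever invoked, is a correct and careful spelling-out of what the paper leaves implicit.
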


It is not hard to check that when $\cC$ is a $\kk$-tensor category
our category $\cW_d(\cC)$ also has a canonical structure of
$\kk$-tensor category induced from that of $\cC$.
We have an enriched 2-functor $\cW_d\colon\tCat_\kk\to\tPsCat_\kk$
where $\tCat_\kk$ is the 2-category of $\kk$-tensor categories,
functors and transformations,
and $\tPsCat_\kk$ is its full sub-2-category consisting of pseudo-abelian ones.
On the other hand, if $A$ is a $\kk$-bialgebra
then the coproduct $\Delta_A$ and the counit $\epsilon_A$ of $A$
will be lifted to those of $A\wr\fS_d$:
for $a_1,\dots,a_d\in A$ and $g\in\fS_d$,
\begin{align*}
\Delta_{A_d}(a_1\otimes\dots\otimes a_d\otimes g) &= \sum (a_1^{(1)}\otimes\dots\otimes a_d^{(1)}\otimes g)\otimes(a_1^{(2)}\otimes\dots\otimes a_d^{(2)}\otimes g),\\
\epsilon_{A_d}(a_1\otimes\dots\otimes a_d\otimes g) &= \epsilon_A(a_1)\dotsm\epsilon_A(a_d)
\end{align*}
so $A\wr\fS_d$ is also a $\kk$-bialgebra.
Here we use the Sweedler notation $\Delta_A(a)=\sum a^{(1)}\otimes a^{(2)}$ to write coproducts.
These structures are of course compatible and
$\cW_d(\Mod(A))\to\Mod(A\wr\fS_d)$ induces a $\kk$-tensor functor.
The same holds for $\kk$-braided tensor categories.

\subsection{Induced objects from Young subgroups}
\label{sec:replica}
For an object $U\in\cC$,
its $d$-fold tensor product $U^{\boxtimes d}\in\cC^{\boxtimes d}$ is clearly $\fS_d$-invariant.
More generally,
let $p\in P(d)$ and take $U_1,\dots,U_d\in\cC$ such that $U_i=U_j$ whenever $i\sim_p j$.
Then the object $U_1\boxtimes\dots\boxtimes U_d$ is $\fS_p$-invariant
and we can induce this object to the $\fS_d$-invariant object
\begin{gather*}
\Ind_p(U_1\boxtimes\dots\boxtimes U_d)\coloneqq
\Ind_{\fS_p}^{\fS_d}(U_1\boxtimes\dots\boxtimes U_d)\in\cW_d(\cC).
\end{gather*}
In this subsection we study the pseudo-abelian full subcategory $\pW_d(\cC)$ of $\cW_d(\cC)$
generated by objects of this form.
That is, an object in $\pW_d(\cC)$ is a direct summand of a direct sum of objects
$\Ind_p(U_1\boxtimes\dots\boxtimes U_d)$.
Note that if $\#\fS_d=d!$ is invertible in $\kk$, $\pW_d(\cC)$ coincides with
the whole category $\cW_d(\cC)$ by Corollary~\ref{cor:ind_generator}.

By its definition in the proof of Proposition~\ref{prop:ind_functor},
\begin{gather*}
\Ind_p(U_1\boxtimes\dots\boxtimes U_d)\simeq\!\!
\bigoplus_{g\in\fS_d/\fS_p}\!\!U_{g^{-1}(1)}\boxtimes\dots\boxtimes U_{g^{-1}(d)}
\end{gather*}
as object in $\cC^{\barboxtimes d}$, so
\begin{multline*}
\Hom_{\cC^{\barboxtimes d}}
(\Ind_p(U_1\boxtimes\dots\boxtimes U_d),\Ind_q(V_1\boxtimes\dots\boxtimes V_d))\\
\simeq\;\bigoplus_{\substack{g\in\fS_d/\fS_p\\h\in\fS_d/\fS_q}}\mspace{-10mu}
\Hom_\cC(U_{g^{-1}(1)},V_{h^{-1}(1)})\otimes\dots\otimes\Hom_\cC(U_{g^{-1}(d)},V_{h^{-1}(d)}).
\end{multline*}
The symmetric group $\fS_d$ acts on the space of $\cC^{\barboxtimes d}$-morphisms above
by permutation
and $\cW_d(\cC)$-morphisms are exactly the fixed points of this action.
To describe them more precisely,
we first study the diagonal action $\fS_d\curvearrowright\fS_d/\fS_p\times\fS_d/\fS_q$.
It is clear that the map
\begin{align*}
\fS_d/\fS_p\times\fS_d/\fS_q&\to\fS_p\backslash\fS_d/\fS_q\\
(g,h)&\mapsto g^{-1}h
\end{align*}
induces a bijection $\fS_d\backslash(\fS_d/\fS_p\times\fS_d/\fS_q)\bijectivemap\fS_p\backslash\fS_d/\fS_q$,
and the stabilizer subgroup of each $(g,h)\in\fS_d/\fS_p\times\fS_d/\fS_q$ is
\begin{align*}
g\fS_p g^{-1}\cap h\fS_q h^{-1}
=\fS_{g(p)}\cap\fS_{h(q)}
=\fS_{g(p)\wedge h(q)}.
\end{align*}
Thus the orbit decomposition gives a bijection
\begin{align*}
\bigsqcup_{k\in\fS_p\backslash\fS_d/\fS_q}\!\!\!\!\!\!
\fS_d/\fS_{p\wedge k(q)}&\bijectivemap\fS_d/\fS_p\times\fS_d/\fS_q\\
(k;g)&\longmapsto(g,gk).
\end{align*}
Here the notation $k\in\fS_p\backslash\fS_d/\fS_q$ means that
$k$ runs over the representatives of the $\fS_p$-orbits of $\fS_d/\fS_q$.
If we choose another representative $xk\in\fS_d/\fS_q$
for $x\in\fS_p$, there are canonical isomorphisms
\begin{align*}
\fS_{p\wedge k(q)}&\eqmap\fS_{p\wedge xk(q)}&
\fS_d/\fS_{p\wedge k(q)}&\eqmap\fS_d/\fS_{p\wedge xk(q)}\\
y&\mapsto xyx^{-1},&
g&\mapsto gx^{-1},
\end{align*}
so the bijection above is well-defined.
This gives us an isomorphism
\begin{align*}
&\mspace{24mu}\Hom_{\cW_d(\cC)}
(\Ind_p(U_1\boxtimes\dots\boxtimes U_d),\Ind_q(V_1\boxtimes\dots\boxtimes V_d))\\
&\simeq\Biggl(\;\bigoplus_{\substack{g\in\fS_d/\fS_p\\h\in\fS_d/\fS_q}}\!\!\!
\Hom_\cC(U_{g^{-1}(1)},V_{h^{-1}(1)})\otimes\dots\otimes\Hom_\cC(U_{g^{-1}(d)},V_{h^{-1}(d)})\Biggr)^{\fS_d}\\
&\simeq\!\!\bigoplus_{k\in\fS_p\backslash\fS_d/\fS_q}\!\Biggl(\;\bigoplus_{g\in\fS_d/\fS_{p\wedge k(q)}}\!\!\!\!\!\!
\Hom_\cC(U_{g^{-1}(1)},V_{(gk)^{-1}(1)})\otimes\dots\otimes\Hom_\cC(U_{g^{-1}(d)},V_{(gk)^{-1}(d)})\Biggr)^{\fS_d}\\
&\simeq\!\!\bigoplus_{k\in\fS_p\backslash\fS_d/\fS_q}\!\!\!\!
(\Hom_\cC(U_1,V_{k^{-1}(1)})\otimes\dots\otimes\Hom_\cC(U_d,V_{k^{-1}(d)}))^{\fS_{p\wedge k(q)}}.\label{eq:hom}
\end{align*}
Here, for each direct summand
\begin{align*}
(\Hom_\cC(U_1,V_{k^{-1}(1)})\otimes\dots\otimes\Hom_\cC(U_d,V_{k^{-1}(d)}))^{\fS_{p\wedge k(q)}}
\end{align*}
in the right-hand side, its
embedding is induced from
\begin{align*}
\varphi_1\otimes\dots\otimes\varphi_d
\mapsto\!\!\sum_{g\in\fS_d/\fS_{p\wedge k(q)}}\!\!\!\!
\varphi_{g^{-1}(1)}\boxtimes\dots\boxtimes\varphi_{g^{-1}(d)}.
\end{align*}

If $\cC$ is a $\kk$-tensor category,
we can calculate tensor product of objects
in the same manner.
In the $\kk$-tensor category $\cC^{\barboxtimes d}$,
\begin{align*}
&\mspace{24mu}\Ind_p(U_1\boxtimes\dots\boxtimes U_d)\otimes\Ind_q(V_1\boxtimes\dots\boxtimes V_d)\\
&\simeq\bigoplus_{\substack{g\in\fS_d/\fS_p\\h\in\fS_d/\fS_q}}\!\!
(U_{g^{-1}(1)}\boxtimes\dots\boxtimes U_{g^{-1}(d)})\otimes(V_{h^{-1}(1)}\boxtimes\dots\boxtimes V_{h^{-1}(d)})\\
&\simeq\bigoplus_{\substack{g\in\fS_d/\fS_p\\h\in\fS_d/\fS_q}}\!\!
(U_{g^{-1}(1)}\otimes V_{h^{-1}(1)})\boxtimes\dots\boxtimes (U_{g^{-1}(d)}\otimes V_{h^{-1}(d)})\\
&\simeq\bigoplus_{k\in\fS_p\backslash\fS_d/\fS_q}\bigoplus_{g\in\fS_{p\wedge k(q)}}\!\!
(U_{g^{-1}(1)}\otimes V_{(gk)^{-1}(1)})\boxtimes\dots\boxtimes (U_{g^{-1}(d)}\otimes V_{(gk)^{-1}(d)})\\
&\simeq\bigoplus_{k\in\fS_p\backslash\fS_d/\fS_q}\!\!\!\!\!\!
\Ind_{p\wedge k(q)}((U_1\otimes V_{k^{-1}(1)})\boxtimes\dots\boxtimes(U_d\otimes V_{k^{-1}(d)})).
\end{align*}
This isomorphism is clearly $\fS_d$-invariant.
Moreover if $\cC$ has a braiding, the induced braiding
at these objects are the direct sum of the morphisms
\begin{gather*}
\Ind_{p\wedge k(q)}((U_1\otimes V_{k^{-1}(1)})\boxtimes\dots\boxtimes(U_d\otimes V_{k^{-1}(d)}))
\eqmap\Ind_{q\wedge k^{-1}(p)}((V_1\otimes U_{k(1)})\boxtimes\dots\boxtimes(V_d\otimes U_{k(d)})).
\end{gather*}
Thus the full subcategory $\pW_d(\cC)$ is closed under
the tensor product and the braiding of $\cW_d(\cC)$.

\subsection{Restriction and Induction}
\label{sec:res_ind}
Let $d_1,d_2\in\NN$ and put $d\coloneqq d_1+d_2$.
We write $i'\coloneqq d_1+i$ for $i=1,\dots,d_2$.
There is a natural embedding of groups
$\fS_{d_1}\times\fS_{d_2}\hookrightarrow\fS_d$
where $\fS_{d_1}$ and $\fS_{d_2}$
acts on $\{1,\dots,d_1\}$ and $\{1',\dots,d_2'\}$ respectively.
Since there is a fully faithful embedding of categories
\begin{gather*}
\cC^G\barboxtimes\cD^H
\to(\cC\barboxtimes\cD)^{G\times H}
\end{gather*}
when groups $G$ and $H$ acts on $\cC$ and $\cD$ respectively,
we have the induction functor
\begin{gather*}
\Ind_{\fS_{d_1}\times \fS_{d_2}}^{\fS_d}\colon
\cW_{d_1}(\cC)\barboxtimes\cW_{d_2}(\cC)\to\cW_d(\cC).
\end{gather*}
To keep notations simple, we also use the binary operator $*$ to denote this induction functor:
\begin{gather*}
U_1*U_2\coloneqq\Ind_{\fS_{d_1}\times \fS_{d_2}}^{\fS_d}(U_1\boxtimes U_2).
\end{gather*}
This operator is associative and commutative
up to canonical isomorphisms.
The direct sum category $\cW_\bullet(\cC)\coloneqq\bigoplus_{d\in\NN}\cW_d(\cC)$
forms a graded $\kk$-symmetric tensor category with respect to the product $*$.

In the other direction, we have no natural restriction functors
since the embedding of categories above is not invertible in general.
However, for an object of the form $\Ind_p(U_1\boxtimes\dots\boxtimes U_d)$
we can calculate its restriction to $\fS_{d_1}\times\fS_{d_2}$.
We omit the proof of the next lemma.
\begin{lemma}
For $U_1,\dots,U_d\in\cC$,
in the $\kk$-linear category $(\cC^{\barboxtimes d})^{\fS_{d_1}\times\fS_{d_2}}$
\begin{gather*}
\Ind_p(U_1\boxtimes\dots\boxtimes U_d)\simeq
\!\!\!\!\!\!\!\!\!\!\!\!
\bigoplus_{g\in{\fS_{d_1}\times\fS_{d_2}}\backslash\fS_{d}/\fS_p}
\!\!\!\!\!\!\!\!\!\!\!\!
\Ind_q(U_{g^{-1}(1)}\boxtimes\dots\boxtimes U_{g^{-1}(d_1)})\boxtimes
\Ind_{q'}(U_{g^{-1}(1')}\boxtimes\dots\boxtimes U_{g^{-1}(d_2')}).
\end{gather*}
Here $q\in P(d_1)$ and $q'\in P(d_2)$ are the restriction of the equivalent relation $g(p)\in P(d)$
to each components.
The notation $g\in{\fS_{d_1}\times\fS_{d_2}}\backslash\fS_{d}/\fS_p$ is same as the previous one.
\end{lemma}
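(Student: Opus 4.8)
The plan is to establish this as the categorical form of Mackey's double-coset decomposition for $\Res^{\fS_d}_{\fS_{d_1}\times\fS_{d_2}}\Ind^{\fS_d}_{\fS_p}$. Write $K\coloneqq\fS_{d_1}\times\fS_{d_2}$, and note that $K=\fS_r$ for the two-block partition $r=\{\{1,\dots,d_1\},\{1',\dots,d_2'\}\}\in P(d)$. First I would recall from the proof of Proposition~\ref{prop:ind_functor}, exactly as used in \ref{sec:replica}, that as an object of $\cC^{\barboxtimes d}$ with all group actions forgotten
\begin{gather*}
\Ind_p(U_1\boxtimes\dots\boxtimes U_d)\simeq\bigoplus_{g\in\fS_d/\fS_p}U_{g^{-1}(1)}\boxtimes\dots\boxtimes U_{g^{-1}(d)}.
\end{gather*}
The group $K$ acts on this object by permutation of tensor factors, and this action merely permutes the summands according to the left multiplication action of $K$ on $\fS_d/\fS_p$. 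The whole task is to reorganize the direct sum by $K$-orbits while tracking the inherited $K$-invariant structure.

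Next I would split $\fS_d/\fS_p$ into $K$-orbits. These correspond bijectively to the double cosets $K\backslash\fS_d/\fS_p$, and a chosen representative $g$ gives an orbit that is $K$-isomorphic to $K/(K\cap g\fS_p g^{-1})$, the stabilizer of $g\fS_p$ being exactly $K\cap g\fS_p g^{-1}$. The sub-sum indexed by a single orbit, equipped with the $K$-invariant structure it inherits, is then precisely the object $\Ind^K_{K\cap g\fS_p g^{-1}}$ of the single summand at $g$; this is just the description of $\Ind$ from Proposition~\ref{prop:ind_functor}, now carried out inside $K$ rather than $\fS_d$. That summand is $U_{g^{-1}(1)}\boxtimes\dots\boxtimes U_{g^{-1}(d)}$, which is invariant under $K\cap g\fS_p g^{-1}$.

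It then remains to identify the stabilizer and factor the induction. Using the identities $g\fS_p g^{-1}=\fS_{g(p)}$ and $\fS_{g(p)}\cap\fS_{h(q)}=\fS_{g(p)\wedge h(q)}$ recorded in \ref{sec:replica}, I would compute
\begin{gather*}
K\cap g\fS_p g^{-1}=\fS_r\cap\fS_{g(p)}=\fS_{r\wedge g(p)}=\fS_q\times\fS_{q'},
\end{gather*}
the last equality holding because $r$ has only the two blocks, so the common refinement $r\wedge g(p)$ has as its blocks the restrictions of $g(p)$ to each half, namely $q\in P(d_1)$ and $q'\in P(d_2)$. Finally, since both $K$ and its subgroup $\fS_q\times\fS_{q'}$ split as products over the two blocks, the coset representatives of $(\fS_{d_1}\times\fS_{d_2})/(\fS_q\times\fS_{q'})$ factor as products of those of $\fS_{d_1}/\fS_q$ and $\fS_{d_2}/\fS_{q'}$; the defining direct sum of $\Ind^K_{\fS_q\times\fS_{q'}}$ therefore factors as a box product of inductions, producing the summand
\begin{gather*}
\Ind_q(U_{g^{-1}(1)}\boxtimes\dots\boxtimes U_{g^{-1}(d_1)})\boxtimes\Ind_{q'}(U_{g^{-1}(1')}\boxtimes\dots\boxtimes U_{g^{-1}(d_2')})
\end{gather*}
of the claimed isomorphism. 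Summing over double-coset representatives $g$ yields the full formula.

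The hard part is purely bookkeeping rather than conceptual: one must check that the $K$-invariant isomorphisms inherited by each orbit block agree with those produced by the abstract induction functor of Proposition~\ref{prop:ind_functor}, and that the factorization of $\Ind^K_{\fS_q\times\fS_{q'}}$ as a box product of inductions respects these structure maps. Each of these compatibilities is a routine but lengthy diagram chase, so the entire mathematical content lies in the orbit-to-double-coset bijection and the stabilizer computation above; this is why the lemma can safely be stated without its proof.
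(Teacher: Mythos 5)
Your proof is correct: the orbit-to-double-coset bijection for the $K=\fS_{d_1}\times\fS_{d_2}$ action on $\fS_d/\fS_p$, the stabilizer computation $K\cap g\fS_p g^{-1}=\fS_{r\wedge g(p)}=\fS_q\times\fS_{q'}$, and the factorization of $\Ind^K_{\fS_q\times\fS_{q'}}$ as a box product all check out. The paper omits the proof of this lemma, but your argument is exactly the Mackey-style computation its Section~3.4 already employs (orbits and stabilizers $\fS_{g(p)\wedge h(q)}$ via double cosets), so it is essentially the intended proof.
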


Thus we can define the restriction functor on $\pW_d(\cC)$:
\begin{gather*}
\Res_{\fS_{d_1}\times \fS_{d_2}}^{\fS_d}\colon
\pW_d(\cC)\to\pW_{d_1}(\cC)\barboxtimes\pW_{d_2}(\cC).
\end{gather*}
It is both the left and the right adjoint of the restricted induction functor on $\pW_d(\cC)$.

On the other hand, let $d_1,d_2\in\NN$ and put $d\coloneqq d_1 d_2$.
Let us write $i^{(j)}\coloneqq(j-1)d_1+i$ for $i=1,\dots,d_1$ and $j=1,\dots,d_2$.
The wreath product of the symmetric group $\fS_{d_1}\wr\fS_{d_2}$
can also be embedded into $\fS_d$ naturally:
the $j$-th component of $(\fS_{d_1})^{d_2}$ correspond to
the permutations on $\{1^{(j)},\dots,d_1^{(j)}\}$
and $\fS_{d_2}$ shuffles the index $j$ of $i^{(j)}$
for all $i=1,\dots,d_1$ simultaneously.
This gives us the induction functor and the restriction functor again:
\begin{align*}
\Ind_{\fS_{d_1}\wr \fS_{d_2}}^{\fS_d}&
\colon\cW_{d_2}(\cW_{d_1}(\cC))\to\cW_d(\cC),\\
\Res_{\fS_{d_1}\wr \fS_{d_2}}^{\fS_d}&
\colon\pW_d(\cC)\to\pW_{d_2}(\pW_{d_1}(\cC)).
\end{align*}
Their calculations are same for $\fS_{d_1}\times\fS_{d_2}$
but using $\fS_{d_1}\wr\fS_{d_2}$.

\section{Wreath Product in Non-integral Rank}
In this section we introduce our main product in this paper, the category $\cS_t(\cC)$.
It interpolates $\cW_d(\cC)$, the categories of representations of wreath products
from $d\in\NN$ to $t\in\kk$.
The original idea of the arguments is due to Deligne~\cite{Deligne:2007}
who first consider the representation theory of symmetric group of non-integral rank.

\subsection{Definition of 2-functor \texorpdfstring{$\cS_t$}{St}}

To apply the 2-functor $\cS_t$,
we need the fixed ``unit object'' in the target category.
So we introduce the notion of ``category with unit'' as follows.

\begin{definition}
\begin{enumerate}
\item A \term{$\kk$-linear category with unit} is
a $\kk$-linear category $\cC$ equipped with a fixed
object $\unit_\cC\in\cC$ which satisfies
$\End_\cC(\unit_\cC)\simeq\kk$.
\item A \term{$\kk$-linear functor with unit}
from $\cC$ to $\cD$ is
a $\kk$-linear functor $F\colon\cC\to\cD$ along with an isomorphism $\iota_F\colon\unit_\cD\eqmap F(\unit_\cC)$.
\item A \term{$\kk$-linear transformation with unit}
from $F$ to $G$ is a $\kk$-linear transformation $\eta\colon F\to G$
which satisfies the same condition as $\kk$-tensor transformations, i.e.\ $\eta(\unit_\cC)=\id_{\unit_\cD}$.
See the diagram on the right in Definition~\ref{def:tenfun}~(2).
\end{enumerate}
\end{definition}

We denote by $\uCat_\kk$ the 2-category consisting of
$\kk$-linear categories, functors and transformations with unit.
Obviously there are forgetful 2-functors $\tCat_\kk\to\uCat_\kk\to\Cat_\kk$.
The reader should check that we can apply all the 2-functors we have defined
to categories with unit and create new categories with unit.

Now fix a $\kk$-linear category $\cC$ with unit
and $d\in\NN$.

\begin{definition}
\label{def:bracket}
Let $I$ be a finite set and
$U_I=(U_i)_{i\in I}$ be a family of objects in $\cC$ indexed by $I$.
Set $m=\#I$ and write $I=\{i_1,\dots,i_m\}$.
Let us define $\br{U_I}\in\cW_d(\cC)$ by
\begin{align*}
\br{U_I}\coloneqq
\begin{dcases*}
U_{i_1}*\dots*U_{i_m}*\unit_\cC^{\boxtimes(d-m)}, &if $m\leq d$,\\
0, &otherwise.
\end{dcases*}
\end{align*}
This object is well-defined because it does not depend on the order of $i_1,\dots,i_m$.
We also write $\br{U_I}$ as $\br{U_{i_1},\dots,U_{i_m}}$.
\end{definition}
Before studying these objects, we introduce some notations.

\begin{definition}
Let $I_1,\dots,I_l$ be finite sets.
A \term{recollement} (gluing) of $I_1,\dots,I_l$
is a partition $r\in P(I_1\sqcup\dots\sqcup I_l)$ such that
for any $a=1,\dots,l$ and $i,i'\in I_a$, $i\sim_r i'$ implies $i=i'$.
In other words, $r$ is a recollement if
each $I_a\to(I_1\sqcup\dots\sqcup I_l)/{\sim_r}$ is injective.
Let us denote by $R(I_1,\dots,I_l)$ the set of recollements of $I_1,\dots,I_l$.

For $\{a_1,\dots,a_p\}\subset\{1,\dots,l\}$,
let $\pi_{a_1,\dots,a_p}\colon R(I_1,\dots,I_l)\to R(I_{a_1},\dots,I_{a_p})$
be the map which takes restriction of equivalence relation
via $I_{a_1}\sqcup\dots\sqcup I_{a_p}\subset I_1\sqcup\dots\sqcup I_l$.
\end{definition}
\begin{notation}
For example, $R(I,J)$ is the set of partitions of the form
\begin{align*}
r=\{\{i,j\},\dots,\{i'\},\dots,\{j'\},\dots\}
\end{align*}
where $i,i',\dots\in I$ and $j,j',\dots\in J$.
For convenience, we represent such $r$ by
\begin{align*}
r=\{(i,j),\dots,(i',\varnothing),\dots,(\varnothing,j'),\dots\}
\end{align*}
where $\varnothing$ is another index different from any element of $I\sqcup J$
so that we can simply write recollement as $r=\{(i,j),\dots\}$.
For more than two sets $I_1,\dots,I_l$, we use the same notation
$r=\{(i_1,\dots,i_l),\dots\}\in R(I_1,\dots,I_l)$.
For any family of objects $U_I$ in $\cC$,
the symbol $U_i$ denotes the unit element $\unit_\cC$ if $i=\varnothing$.
\end{notation}

\begin{definition}
Let $U_I,V_J$ be finite families of objects in $\cC$.
For $r\in R(I,J)$, define the $\kk$-module
\begin{align*}
H(U_I;V_J)
&\coloneqq\!\!\bigoplus_{r\in R(I,J)}\!\!
H_r(U_I;V_J)
\intertext{where for each $r\in R(I,J)$,}
H_r(U_I;V_J)&\coloneqq
\bigotimes_{(i,j)\in r}\Hom_\cC(U_i,V_j).
\end{align*}
This $\kk$-module is $\NN$-graded by length of recollements.
Let
\begin{align*}
H^d(U_I;V_J)
&\coloneqq\!\!\bigoplus_{\substack{r\in R(I,J)\\\#r=d}}\!\!
H_r(U_I;V_J)
\end{align*}
and write $H^{\le d}(U_I;V_J)\coloneqq\bigoplus_{e\le d}H^e(U_I;V_J)$ and
$H^{>d}(U_I;V_J)\coloneqq\bigoplus_{e>d}H^e(U_I;V_J)$.
Obviously $H^d(U_I;V_J)=0$ unless $\#I,\#J\leq d\leq\#I+\#J$.
\end{definition}

Let $I,J$ be finite sets such that $\#I,\#J\leq d$.
Write $I=\{i_1,\dots,i_m\}$, $J=\{j_1,\dots,j_n\}$ and
let $p,q\in P(d)$ as
\begin{align*}
p&={\{\{1\},\dots,\{m\},\{m+1,\dots,d\}\}},&
q&={\{\{1\},\dots,\{n\},\{n+1,\dots,d\}\}}
\end{align*}
respectively. For each $g\in\fS_d$,
we can take a unique recollement $r\in R(I,J)$
which satisfies $i_k\sim_r j_l$ if and only if $g(k)=l$.
The correspondence $g\mapsto r$ induces a bijection
$\fS_p\!\backslash\fS_d/\fS_q\bijectivemap\{r\in R(I,J)\;|\;\#r\leq d\}$.
Thus the isomorphism in Section~\ref{sec:replica} gives
\begin{align*}
\Hom_{\cW_d(\cC)}(\br{U_I},\br{V_J})
\simeq H^{\leq d}(U_I;V_J).
\end{align*}
This is also true when $\#I>d$ or $\#J>d$ because both sides are zero.

For $\Phi\in H^{\leq d}(U_I;V_J)$, let us denote by $\br{\Phi}\colon\br{U_I}\to\br{V_J}$
the map corresponding to $\Phi$ via the isomorphism above.
We give an explicit description of it here.
\begin{definition}
Let $I,J$ be finite sets.
We say that a sequence
$((i_1,j_1),\dots,(i_d,j_d))$
for $i_1,\dots,i_d\in I\sqcup\{\varnothing\}$, $j_1,\dots,j_d\in J\sqcup\{\varnothing\}$
is \term{adapted} to a recollement $r\in R(I,J)$ if
the sequence obtained by removing all $(\varnothing,\varnothing)$'s from it
is equal to a permutation of all the elements in the set $r=\{(i,j),\dots\}$.
\end{definition}
Recall that $\br{U_I}$
is a direct sum of objects of the form $U_{i_1}\boxtimes\dots\boxtimes U_{i_d}$
($i_1,\dots,i_d\in I\sqcup\{\varnothing\}$)
in $\cC^{\barboxtimes d}$.
Let $r\in R(I,J)$ and $\Phi\in H_r(U_I,V_J)$.
Each matrix entry of $\br{\Phi}\colon\br{U_I}\to\br{V_J}$ at the cell
\begin{align*}
U_{i_1}\boxtimes\dots\boxtimes U_{i_d}\to V_{j_1}\boxtimes\dots\boxtimes V_{j_d}
\end{align*}
for $i_1,\dots,i_d\in I\sqcup\{\varnothing\}$, $j_1,\dots,j_d\in J\sqcup\{\varnothing\}$
is equal to $\Phi$ (after reordering the tensor terms) if
$((i_1,j_1),\dots,(i_d,j_d))$ is adapted to $r$
and otherwise zero.

We extend the usage of this symbol $\br{\Phi}$ for $\Phi\in H^{>d}(U_I;V_J)$ so that $\br{\Phi}=0$.
Thus we have a $\kk$-linear map $\br{\bullet}\colon H(U_I;V_J)\to\Hom_{\cW_d(\cC)}(\br{U_I},\br{V_J})$
which is surjective and whose kernel is $H^{>d}(U_I;V_J)$.

\begin{example}
Let us consider the case $d=3$.
Take objects $U,V\in\cC$ then we have
\begin{gather*}
\Hom_{\cW_3(\cC)}(\br[3]{U},\br[3]{V})\simeq
\Hom_\cC(U,V)\oplus(\Hom_\cC(U,\unit_\cC)\otimes\Hom_\cC(\unit_\cC,V)).
\end{gather*}
Let us confirm this directly as follows.
In $\cC^{\barboxtimes3}$,
\begin{align*}
\br[3]{U}&\simeq(U\boxtimes\unit_\cC\boxtimes\unit_\cC)
\oplus(\unit_\cC\boxtimes U\boxtimes\unit_\cC)
\oplus(\unit_\cC\boxtimes\unit_\cC\boxtimes U),\\
\br[3]{V}&\simeq(V\boxtimes\unit_\cC\boxtimes\unit_\cC)
\oplus(\unit_\cC\boxtimes V\boxtimes\unit_\cC)
\oplus(\unit_\cC\boxtimes\unit_\cC\boxtimes V).
\end{align*}
For morphisms $\varphi\colon U\to V$, $\psi\colon U\to\unit_\cC$ and $\xi\colon\unit_\cC\to V$
in $\cC$,
the morphisms $\br[3]{\varphi}\colon\br[3]{U}\to\br[3]{V}$ and
$\br[3]{\psi\otimes\xi}\colon\br[3]{U}\to\br[3]{V}$
in $\cC^{\barboxtimes3}$ are represented by the matrices
\begin{align*}
\br[3]{\varphi}&\coloneqq
\begin{pmatrix}
\varphi\boxtimes1\boxtimes1&0&0\\
0&1\boxtimes\varphi\boxtimes1&0\\
0&0&1\boxtimes1\boxtimes\varphi
\end{pmatrix},\\
\br[3]{\psi\otimes\xi}&\coloneqq
\begin{pmatrix}
0&\xi\boxtimes\psi\boxtimes1&\xi\boxtimes1\boxtimes\psi\\
\psi\boxtimes\xi\boxtimes1&0&1\boxtimes\xi\boxtimes\psi\\
\psi\boxtimes1\boxtimes\xi&1\boxtimes\psi\boxtimes\xi&0
\end{pmatrix}
\end{align*}
respectively where $1$ stands for $\id_{\unit_\cC}\colon\unit_\cC\to\unit_\cC$.
It is clear that the space of all $\fS_3$-invariant morphisms
in $\cC^{\barboxtimes3}$ are spanned by them.
It is true for all $d\geq2$ but when $d=0$ or $1$ the matrices become smaller
and some of the non-zero terms disappear.
\end{example}

What we have to do next is to compute the composition of these morphisms.

\begin{definition}
Let $r\in R(I,J)$, $s\in R(J,K)$ be two recollements.
We define the set
\begin{align*}
R(s\circ r)\coloneqq\{u\in R(I,J,K)\;|\;\pi_{1,2}(u)=r,\;\pi_{2,3}(u)=s\}.
\end{align*}
For $u\in R(s\circ r)$, $\Phi\in H_r(U_I;V_J)$ and
$\Psi\in H_s(V_J;W_K)$,
we denote by $\Psi\circ_u\Phi\in H_{\pi_{1,3}(u)}(U_I;W_K)$
the element obtained by composing terms of $\Phi\otimes\Psi$
using compositions
\begin{align*}
\Hom_\cC(U_i,V_j)\otimes\Hom_\cC(V_j,W_k)\to\Hom_\cC(U_i,W_k)
\end{align*}
for all $(i,j,k)\in u$.
If both $i$ and $k$ are $\varnothing$, the composite of $\unit_\cC\to V_j\to\unit_\cC$
is regarded as a scalar in $\kk\simeq\End_\cC(\unit_\cC)$.
\end{definition}

\begin{lemma}
\label{lem:complaw}
Let $\Phi\in H_r(U_I;V_J),\Psi\in H_s(V_J;W_K)$ be as above. Then
\begin{gather*}
\br{\Psi}\circ\br{\Phi}=\!\!\sum_{u\in R(s\circ r)}
\!\!P_u(d)\;\br{\Psi\circ_u\Phi}
\end{gather*}
where $P_u$ is the polynomial
\begin{gather*}
P_u(T)\coloneqq
\!\!\prod_{\#\pi_{1,3}(u)\leq a<\#u}\!\!\!\!\!\!(T-a)
=(T-\#\pi_{1,3}(u))\dotsm(T-\#u+1).
\end{gather*}
\end{lemma}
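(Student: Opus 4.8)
The plan is to prove the identity coefficientwise with respect to the direct-sum decompositions of $\br{U_I}$, $\br{V_J}$ and $\br{W_K}$ as objects of $\cC^{\barboxtimes d}$. Recall that $\br{U_I}$ is the direct sum of the cells $U_{i_1}\boxtimes\dots\boxtimes U_{i_d}$ indexed by sequences $(i_1,\dots,i_d)$ in $I\sqcup\{\varnothing\}$ in which every element of $I$ occurs exactly once (call these $I$-placements), and similarly for $\br{V_J}$ and $\br{W_K}$. First I would fix an input cell $(i_1,\dots,i_d)$ and an output cell $(k_1,\dots,k_d)$; writing $\Phi_{(i),(j)}$ and $\Psi_{(j),(k)}$ for the corresponding matrix coefficients of $\br{\Phi}$ and $\br{\Psi}$ (which, by the explicit adapted-sequence description, are reorderings of $\Phi$ and $\Psi$ or zero), the coefficient of $\br{\Psi}\circ\br{\Phi}$ at this pair of cells is
\[
\sum_{(j_1,\dots,j_d)}\Psi_{(j),(k)}\circ\Phi_{(i),(j)},
\]
the sum running over all intermediate $J$-placements.

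Next, to each intermediate placement $(j)$ making both factors nonzero --- that is, with $((i_a,j_a))_a$ adapted to $r$ and $((j_a,k_a))_a$ adapted to $s$ --- I would attach the recollement $u\in R(I,J,K)$ whose blocks group the non-$\varnothing$ entries lying in a common slot $a$. Adaptedness gives at once $\pi_{1,2}(u)=r$ and $\pi_{2,3}(u)=s$, so $u\in R(s\circ r)$, and the slotwise composite $\Psi_{(j),(k)}\circ\Phi_{(i),(j)}$ is by construction $\Psi\circ_u\Phi$ (a pure-$J$ slot, where $i_a=k_a=\varnothing$, contributes precisely the scalar $\unit_\cC\to V_{j_a}\to\unit_\cC$ appearing in the definition of $\circ_u$). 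The key observation is that $\pi_{1,3}(u)$ depends only on the fixed cells $(i)$ and $(k)$: it is the recollement $t\in R(I,K)$ recording which $I$- and $K$-elements share a slot, independent of $(j)$. Grouping the displayed sum by $u$ therefore rewrites the coefficient as $\sum_u N_u\,(\Psi\circ_u\Phi)$, where $N_u$ counts the intermediate placements producing $u$ and only those $u$ with $\pi_{1,3}(u)=t$ occur.

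The heart of the matter is then the count $N_u=P_u(d)$, which I expect to be the main obstacle. With $(i)$, $(k)$ and $u$ fixed, every $J$-element lying in a block of $u$ that also meets $I$ or $K$ is forced into the unique slot prescribed by that block, whereas the pure-$J$ blocks $(\varnothing,j,\varnothing)$ must be distributed injectively among the slots $a$ with $(i_a,k_a)=(\varnothing,\varnothing)$. There are $d-\#\pi_{1,3}(u)$ such free slots and $\#u-\#\pi_{1,3}(u)$ pure-$J$ blocks, so $N_u$ is the falling factorial
\[
(d-\#\pi_{1,3}(u))(d-\#\pi_{1,3}(u)-1)\dotsm(d-\#u+1),
\]
which is exactly $P_u(d)$. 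The degenerate case $\#u>d$ is handled automatically: a factor $d-a$ with $a=d$ then occurs and vanishes, matching the impossibility of placing more than $d$ blocks into $d$ slots.

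Finally I would compare with the right-hand side. For the same cells $(i),(k)$, the coefficient of $\sum_{u}P_u(d)\,\br{\Psi\circ_u\Phi}$ is nonzero only for those $u$ with $((i_a,k_a))_a$ adapted to $\pi_{1,3}(u)$, i.e.\ $\pi_{1,3}(u)=t$, each such $u$ contributing $P_u(d)\,(\Psi\circ_u\Phi)$; this agrees term by term with the grouped left-hand coefficient. Since the cells were arbitrary and summing over all of them reconstructs the two morphisms $\br{U_I}\to\br{W_K}$ (each $u\in R(s\circ r)$ contributing to the cells with $\pi_{1,3}(u)$ matching), the identity of morphisms follows. The only remaining point is to check that the reordering of tensor factors implicit in $\Phi_{(i),(j)}$, $\Psi_{(j),(k)}$ and $\br{\Psi\circ_u\Phi}$ is consistent throughout, which is routine bookkeeping.
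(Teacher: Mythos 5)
Your proof is correct and follows essentially the same route as the paper: both compute matrix entries in $\cC^{\barboxtimes d}$, attach to each intermediate $J$-placement a recollement $u\in R(s\circ r)$, and obtain the coefficient $P_u(d)$ by noting that all entries of $J$ meeting $I$ or $K$ are forced into their slots while the orphans $(\varnothing,j,\varnothing)$ are placed injectively among the $d-\#\pi_{1,3}(u)$ free slots, giving the falling factorial. Your treatment is if anything slightly more explicit than the paper's (e.g.\ in observing that $\pi_{1,3}(u)$ is determined by the fixed cells and that the case $\#u>d$ is absorbed by a vanishing factor), but there is no substantive difference in method.
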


Note that the degree $\#u-\#\pi_{1,3}(u)$ of $P_u$ does not depend on the choice of $u\in R(s\circ r)$.
This is equal to the number of ``orphans'' $(\varnothing,j,\varnothing)\in u$ in $J$.

\begin{proof}
If $\#I>d$ or $\#K>d$, both sides above are zero and the equation clearly holds.
Otherwise the composite is a sum of morphisms of the form
$\br{\Psi\circ_u\Phi}$.
Since $\pi_{1,3}\colon R(s\circ r)\to R(I,K)$ is injective,
we can uniquely write
\begin{align*}
\br{\Psi}\circ\br{\Phi}=
\!\!\sum_{\substack{u\in R(s\circ r)\\\#\pi_{1,3}(u)\leq d}}\!\!a_u\br{\Psi\circ_u\Phi}
\end{align*}
for some $a_u\in\NN$ for each $u\in R(s\circ r)$.
So take $u\in R(s\circ r)$ with $\#\pi_{1,3}(u)\leq d$
and fix a sequence $((i_1,k_1),\dots,(i_d,k_d))$ adapted to $\pi_{1,3}(u)$.
Since the matrix entry of $\br{\Psi}\circ\br{\Phi}$ in $\cC^{\barboxtimes d}$ at the cell
\begin{gather*}
U_{i_1}\boxtimes\dots\boxtimes U_{i_d}\to W_{k_1}\boxtimes\dots\boxtimes W_{k_d}
\end{gather*}
coincides with $a_u(\Psi\circ_u\Phi)$,
$a_u$ is equal to the number of sequences $(j_1,\dots,j_d)$ such that
both $((i_1,j_1),\dots,(i_d,j_d))$ and $((j_1,k_1),\dots,(j_d,k_d))$ are adapted to
the recollements $r$ and $s$ respectively.
For $a=1,\dots,d$, $j_a\in J\sqcup\{\varnothing\}$ is uniquely determined
if at least one of $i_a$ and $k_a$ is not $\varnothing$.
Thus only we can choose is the positions of $j\in J$
correspond to orphans $(\varnothing,j,\varnothing)\in u$.
The number of them is $\#u-\#\pi_{1,3}(u)$ and we can place them in
$d-\#\pi_{1,3}(u)$ distinct positions.
So the number of choices is
$P_u(d)=(d-\#\pi_{1,3}(u))\dotsm(d-\#u+1)$.
\end{proof}

We remark that in the composition law above the rank $d$ only appears
as polynomials in the coefficients.
So we can change $d\in\NN$ into an arbitrary $t\in\kk$.
This is the definition of our category $\cS_t(\cC)$.

\begin{definition}
Let $\cC$ be a $\kk$-linear category with unit and $t\in\kk$.
We define the $\kk$-linear category $\cS_t(\cC)$
by taking the pseudo-abelian envelope of the category defined
as follows:
\begin{description}
\item[Object] A finite family of objects in $\cC$ written as $\ag{U_I}$ for $U_I=(U_i)_{i\in I}$.
We also write $\ag{U_I}=\ag{U_{i_1},\dots,U_{i_m}}$ when $I=\{i_1,\dots,i_m\}$.
\item[Morphism] For objects $\ag{U_I}$ and $\ag{V_J}$,
\begin{gather*}
\Hom_{\cS_t(\cC)}(\ag{U_I},\ag{V_J})\simeq H(U_I;V_J).
\end{gather*}
For each $\Phi\in H(U_I;V_J)$, we denote by $\ag{\Phi}$
the corresponding morphism in $\cS^0_t(\cC)$.
The composition of morphisms is given by
\begin{align*}
\ag{\Psi}\circ\ag{\Phi}\coloneqq\!\!\sum_{u\in R(s\circ r)}
\!\!P_u(t)\;\ag{\Psi\circ_u\Phi}
\end{align*}
for each $\Phi\in H_r(U_I;V_J)$, $\Psi\in H_s(V_J;W_K)$.
\end{description}
The unit object $\unit_{\cS_t(\cC)}$ of $\cS_t(\cC)$ is the object $\ag{}$
corresponding to the empty family.
\end{definition}

\begin{lemma}
The category $\cS_t(\cC)$ is well-defined;
that is, there are identity morphisms and the composition of morphisms
is associative.
\end{lemma}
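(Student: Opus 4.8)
The plan is to verify the two category axioms: existence of identities, which I would exhibit explicitly, and associativity, which I would deduce from the integral ranks $\cW_d(\cC)$ by interpolation in $t$. For the identity on $\ag{U_I}$ I would take $\ag{\Phi_0}$, where $\Phi_0=\bigotimes_{i\in I}\id_{U_i}\in H_{r_0}(U_I;U_I)$ is attached to the diagonal recollement $r_0=\{(i,i)\mid i\in I\}\in R(I,I)$. For any $\ag{\Psi}$ with $\Psi\in H_s(U_I;V_J)$, the set $R(s\circ r_0)$ has a single element $u$: the condition $\pi_{1,2}(u)=r_0$ glues each element of the middle copy of $I$ to the matching source element and leaves no middle orphan, so $\#u=\#\pi_{1,3}(u)$, the coefficient $P_u(t)=1$ is an empty product, and $\Psi\circ_u\Phi_0=\Psi$; hence $\ag{\Psi}\circ\ag{\Phi_0}=\ag{\Psi}$, and the symmetric computation makes $\ag{\Phi_0}$ a left unit too.

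For associativity, the guiding observation is that the composition law of $\cS_t(\cC)$ is formally the one of $\cW_d(\cC)$ from Lemma~\ref{lem:complaw}, with the integer $d$ replaced by $t$ in the coefficient polynomials $P_u$. By multilinearity it suffices to treat homogeneous $\Phi\in H_r(U_I;V_J)$, $\Psi\in H_s(V_J;W_K)$, $\Xi\in H_q(W_K;X_L)$. Expanding both bracketings with the composition law produces
\[
(\ag{\Xi}\circ\ag{\Psi})\circ\ag{\Phi}-\ag{\Xi}\circ(\ag{\Psi}\circ\ag{\Phi})=\sum_{\gamma}p_\gamma(t)\,\theta_\gamma=\sum_{n}C_n\,t^n,
\]
where each $p_\gamma\in\ZZ[t]$ is a product of two coefficient polynomials, each $\theta_\gamma\in H(U_I;X_L)$ is an iterated $\circ_\bullet$-composite of the fixed morphisms (independent of $t$), and the $C_n\in H(U_I;X_L)$ are fixed. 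The goal is to show every $C_n=0$.

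To interpolate I would use, for each $d\in\NN$, the tautological $\kk$-linear map $\br{\bullet}\colon H(U_I;X_L)\to\Hom_{\cW_d(\cC)}(\br{U_I},\br{X_L})$, which is surjective with kernel $H^{>d}(U_I;X_L)$ and, by Lemma~\ref{lem:complaw}, intertwines the formal composition at $t=d$ with genuine composition in $\cW_d(\cC)$. Applying $\br{\bullet}$ to the displayed difference at $t=d$ therefore yields the difference of the two bracketings computed inside the honest category $\cW_d(\cC)$, which vanishes. Since every $\theta_\gamma$ lies in a block $H_\rho$ with $\#\rho\le\#I+\#L$, the element $\sum_n C_n d^n$ lies in $H^{\le\#I+\#L}$; for $d\ge\#I+\#L$, where $\br{\bullet}$ is injective on $H^{\le d}$, this forces $\sum_n C_n d^n=0$ in $H(U_I;X_L)$ for all sufficiently large $d$.

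The hard part is the final passage from ``$\sum_n C_n d^n=0$ for all large $d\in\NN$'' to ``$C_n=0$'', because over an arbitrary commutative ring $\kk$ a module-valued polynomial can vanish at every natural number without being zero (for instance $t^2-t$ over $\kk=\ZZ/2$), so naive Vandermonde interpolation is unavailable. I would remove this obstacle by a universality reduction: replace $\kk$ by a polynomial ring $\kk^{\mathrm{univ}}=\ZZ[\dots]$ carrying one indeterminate for each loop-scalar $\unit\to\bullet\to\unit$ that can arise, and replace $\cC$ by a free $\kk^{\mathrm{univ}}$-linear category with unit in which the generators of $\Phi,\Psi,\Xi$ are free and the reduced composites $\theta_\gamma$ are part of a free basis. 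The whole construction is functorial, and there is a $\kk$-linear functor with unit from this model to $\cC$ carrying the universal data to $\Phi,\Psi,\Xi$, so it suffices to prove $C_n^{\mathrm{univ}}=0$ in the model. There, reading coefficients in the chosen basis turns $\sum_n C_n^{\mathrm{univ}}d^n=0$ into scalar identities over the integral domain $\kk^{\mathrm{univ}}$ holding at infinitely many integers, whence the coefficients vanish by honest Vandermonde; pushing this forward along the functor gives $C_n=0$, so the two bracketings agree for the given $t\in\kk$.
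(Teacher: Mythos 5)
Your proposal is correct, and up to its last step it is the paper's own argument. The identity morphism you exhibit on the diagonal recollement is exactly the paper's choice $\Bigag{\bigotimes_{i\in I}\id_{U_i}}\in\bigag{H_{r_I}(U_I;U_I)}$, and your check that $R(s\circ r_0)$ is a singleton with no middle orphans, so that $P_u(t)=1$ and $\Psi\circ_u\Phi_0=\Psi$, is the computation the paper leaves implicit. Likewise your associativity scheme --- pass to an indeterminate rank, evaluate at integer ranks $d$ via $\br{\bullet}$ and Lemma~\ref{lem:complaw}, and use that $\br{\bullet}$ is injective on $H^{\le d}(U_I;X_L)$ once $d\ge\#I+\#L$ --- is precisely the paper's proof, which forms the associator defect $\ag[T]{\Upsilon}$ over $\kk[T]$ and deduces $\Upsilon|_{T=d}=0$ for all such $d$.

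Where you genuinely add content is the final inference from ``$\Upsilon|_{T=d}=0$ for all large $d\in\NN$'' to ``$\Upsilon=0$''. The paper performs this step without comment (``Thus $\Upsilon=0$''), but as you observe it is not a formal consequence for polynomials with coefficients in a module over an arbitrary commutative ring: over $\kk=\ZZ/2\ZZ$ the polynomial $T^2-T$ vanishes at every natural number, and naive Vandermonde elimination only yields $N\cdot C_n=0$ for some nonzero integer $N$, which need not be invertible in $\kk$. Your universal-coefficients reduction closes exactly this gap: after reducing by multilinearity to pure tensors, you prove the identity in a free model over the integral domain $\ZZ[\text{loop scalars}]$, where terms grouped by the combinatorial type of the triple recollement have literally equal formal composites $\theta_\gamma$ forming part of a free basis, so vanishing at infinitely many integers does kill the integer-polynomial coefficients; specializing along a linear functor with unit then transports the conclusion to $\cC$. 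In substance this is the observation that associativity is a family of identities in $\ZZ[T]$ indexed by recollement combinatorics, independent of $\cC$ and $\kk$. So: same route as the paper, with the paper's final interpolation step made rigorous at the modest cost of verifying that the free model exists and that the operations $\circ_u$ and the coefficients $P_u$ (which depend only on combinatorics) are preserved under specialization --- which your functoriality remark covers.
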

\begin{proof}
The identity morphism of $\ag{U_I}$ is given by
\begin{align*}
\Bigag{\bigotimes_{i\in I}\id_{U_i}}
\in \bigag{H_{r_I}(U_I;U_I)}
\end{align*}
where $r_I=\{(i,i)\,|\,i\in I\}\in R(I,I)$.
To prove associativity, we first prove the case for
replacing $\kk$ with the polynomial ring $\kk[T]$ and $t$ with the indeterminate $T\in\kk[T]$.
Let $\Phi\in H(U_I;V_J)$, $\Psi\in H(V_J;W_K)$ and $\Theta\in H(W_K;X_L)$.
Set
\begin{align*}
\ag[T]{\Upsilon}=
\bigl(\ag[T]{\Theta}\circ\ag[T]{\Psi}\bigr)\circ\ag[T]{\Phi}-
\ag[T]{\Theta}\circ\bigl(\ag[T]{\Psi}\circ\ag[T]{\Phi}\bigr).
\end{align*}
For all $d\in\NN$, we have
$\br{\Upsilon|_{T=d}}=0$ by Lemma~\ref{lem:complaw}.
Since $\br{\bullet}$ is an isomorphism when $d\geq \#I+\#L$,
we have $\Upsilon|_{T=d}=0$ for such $d$.
Thus $\Upsilon=0$ and
we get the associativity for $t\in\kk$ by substituting $T=t$.
\end{proof}

\begin{definition}
For a functor $F\colon\cC\to\cD$ with unit,
let $\cS_t(F)$ be the functor $\cS_t(\cC)\to\cS_t(\cD)$ with unit which sends
$\ag{U_I}$ to $\ag{F(U_I)}$ and $\ag{\Phi}$ to $\ag{F(\Phi)}$.
For a $\kk$-linear transformation $\eta\colon F\to G$ with unit,
let us define the $\kk$-linear transformation $\cS_t(\eta)\colon\cS_t(F)\to\cS_t(G)$ with unit by
\begin{align*}
\cS_t(\eta)(\ag{U_I})\coloneqq\Bigag{\bigotimes_{i\in I}\eta(U_i)}
\in \bigag{H_{r_I}(F(U_I);G(U_I))}
\end{align*}
where $r_I=\{(i,i)\,|\,i\in I\}\in R(I,I)$.
These operations define a 2-functor $\cS_t\colon\uCat_\kk\to\uPsCat_\kk$
where $\uPsCat_\kk$ is defined as same as before.
\end{definition}

Now the following statements are obvious.
\begin{theorem}
Let $d\in\NN$.
For finite families $U_I,V_J$ of objects in $\cC$, the map
\begin{align*}
\Hom_{\cS_d(\cC)}(\ag[d]{U_I},\ag[d]{V_J})&
\to\Hom_{\cW_d(\cC)}(\br{U_I},\br{V_J})\\
\ag[d]{\Phi}&\mapsto\br{\Phi}
\end{align*}
is surjective and its kernel is $\bigag[d]{H^{>d}(U_I;V_J)}$.
In particular, it is an isomorphism when $d\geq \#I+\#J$.
This map induces a functor
$\cS_d(\cC)\to\cW_d(\cC);\ag[d]{U_I}\mapsto\br{U_I}$.
If $d!$ is invertible in $\kk$, this functor is also essentially surjective on objects.
\end{theorem}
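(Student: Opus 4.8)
The plan is to read off each assertion from structure already in place, leaving genuine work only for the final clause. First I would recall that by construction $\Hom_{\cS_d(\cC)}(\ag[d]{U_I},\ag[d]{V_J})=H(U_I;V_J)$ on the generating objects, while the computation of Section~\ref{sec:replica} identifies $\Hom_{\cW_d(\cC)}(\br{U_I},\br{V_J})$ with $H^{\leq d}(U_I;V_J)$ so that the map $\ag[d]{\Phi}\mapsto\br{\Phi}$ becomes exactly the map $\br{\bullet}\colon H(U_I;V_J)\to\Hom_{\cW_d(\cC)}(\br{U_I},\br{V_J})$ considered just before the definition of $\cS_t(\cC)$; that map is already known to be surjective with kernel $\bigag[d]{H^{>d}(U_I;V_J)}$, which settles the first two claims. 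For the isomorphism I would note that each class of a recollement $r\in R(I,J)$ meets $I$ and $J$ in at most one element each, so $\#r\leq\#I+\#J$; hence $H^{>d}(U_I;V_J)=0$ once $d\geq\#I+\#J$.

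That the assignment is functorial is immediate from Lemma~\ref{lem:complaw}: the formula $\br{\Psi}\circ\br{\Phi}=\sum_{u\in R(s\circ r)}P_u(d)\,\br{\Psi\circ_u\Phi}$ is the defining composition law of $\cS_d(\cC)$ at $t=d$, and the identity morphism goes to the identity by inspection. This gives a $\kk$-linear functor $\cS_d^0(\cC)\to\cW_d(\cC)$, which extends uniquely to $\cS_d(\cC)=\Ps(\cS_d^0(\cC))\to\cW_d(\cC)$ by the universal property of the pseudo-abelian envelope, since $\cW_d(\cC)$ is pseudo-abelian; being surjective on every generating Hom-space, this functor $F$ is moreover full.

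For the last clause, observe that the discrete partition $p=\{\{1\},\dots,\{d\}\}$ has trivial Young subgroup $\fS_p=\{1\}$, whence $\br{U_1,\dots,U_d}=\Ind_{\{1\}}^{\fS_d}(U_1\boxtimes\dots\boxtimes U_d)$. When $d!$ is invertible, Corollary~\ref{cor:ind_generator} shows these objects generate $\cW_d(\cC)$, so any $Y\in\cW_d(\cC)$ is a direct summand of $F(X)$ for some $X$ that is a direct sum of such top-size ($\#I=d$) objects. The remaining point, which I expect to be the main obstacle, is to upgrade ``summand of an image'' to ``image'': I would lift the idempotent $e\in\End_{\cW_d(\cC)}(F(X))$ cutting out $Y$ to an idempotent in $\End_{\cS_d(\cC)}(X)$, after which idempotent-completeness of $\cS_d(\cC)$ splits off an object mapping to $Y$. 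The lifting is possible because on size-$d$ objects $H^{\leq d}(U_I;V_J)=H^d(U_I;V_J)$ consists of the recollements that match $I$ with $J$ bijectively, and two such ``permutation'' recollements compose in $\cS_d(\cC)$ with no lower-order correction terms: the unique $u\in R(s\circ r)$ then has $\#\pi_{1,3}(u)=\#u$, so $P_u(d)$ is an empty product equal to $1$. Hence $F$ restricts to an algebra isomorphism from this permutation subalgebra onto $\End_{\cW_d(\cC)}(F(X))$, providing a ring-theoretic section along which idempotents lift. Verifying this closure of the permutation part under composition is the one step that requires genuine care; note that invertibility of $d!$ is used only to obtain the generation by top-size objects via Corollary~\ref{cor:ind_generator}, not in the lifting itself.
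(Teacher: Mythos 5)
Your proposal is correct, and for the first three claims it is exactly the intended derivation: the paper offers no written proof (it declares the statements obvious after the construction), and indeed the identification $\Hom_{\cS_d(\cC)}(\ag[d]{U_I},\ag[d]{V_J})=H(U_I;V_J)$, the previously established surjection $\br{\bullet}$ with kernel $H^{>d}(U_I;V_J)$, the bound $\#r\leq\#I+\#J$, the matching of composition laws via Lemma~\ref{lem:complaw} at $t=d$, and the universal property of $\Ps$ give everything up to and including functoriality just as you say.

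Where your write-up goes beyond the paper is the final clause, and rightly so: this is the one place where ``obvious'' hides content. Fullness of a functor does not by itself let you split off summands of images, because idempotents need not lift along a surjection of endomorphism rings --- and here they genuinely need not, since the kernel is not nil. For instance with $d=1$, $\cC=\Rep(\kk)$ and $\psi=\xi=\id_{\unit_\cC}$, the element $x=\ag[1]{\psi\otimes\xi}\in\End_{\cS_1(\cC)}(\ag[1]{\unit_\cC})$ satisfies $x^2=-x$, so $-x$ is a nonzero idempotent lying in the kernel (its image is a nonzero object killed by the functor). Your multiplicative-section argument circumvents this correctly: on families with $\#I=\#J=d$ one has $H^{\leq d}=H^d$, the unique $u\in R(s\circ r)$ for two permutation recollements has $\#\pi_{1,3}(u)=\#u$ so that $P_u(d)$ is the empty product $1$, hence $\bigag[d]{H^d}$ is a subalgebra mapped isomorphically onto the endomorphism ring downstairs, and idempotents lift along this section. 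Combined with Corollary~\ref{cor:ind_generator}, which (as you note, via $\Ind^{\fS_d}_{\{1\}}(U_1\boxtimes\dots\boxtimes U_d)=\br{U_1,\dots,U_d}$) shows the top-size objects generate $\cW_d(\cC)$ when $d!$ is invertible, this completes essential surjectivity. So your proof is a faithful expansion of the paper's intended argument, and the section-through-$H^d$ step is a necessary supplement rather than an optional nicety.
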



\begin{remark}
Deligne's category $\mathrm{Rep}(S_t,\kk)$ in \cite{Deligne:2007}
is equal to $\cS_t(\Triv_\kk)$, in our language.
Since $\cS_t(\cC)\simeq\cS_t(\Ps(\cC))$,
this is also equivalent to $\cS_t(\Rep(\kk))$.
Its generalization $\mathrm{Rep}(G\wr S_t,\kk)$ for a finite group $G$
by Knop~\cite{Knop:2006,Knop:2007} is equivalent to the full subcategory
of $\cS_t(\Rep(\kk[G]))$ generated by $\ag{\kk[G]}^{\otimes m}$
where $\kk[G]$ is
the regular representation of $G$.
\end{remark}

\subsection{\texorpdfstring{$\cS_t$}{St} for Tensor categories}
When $\cC$ is a $\kk$-tensor category,
we can calculate the tensor product of objects of the form $\br{U_I}$
in the same manner as in the previous subsection.
It holds for families $U_I$ and $V_J$ that
\begin{align*}
\br{U_I}\otimes\br{V_J}
\simeq\!\bigoplus_{\substack{r\in R(I,J)\\\#r\leq d}}\!\bigbr{T_r(U_I,V_J)}
\simeq\!\bigoplus_{r\in R(I,J)}\!\bigbr{T_r(U_I,V_J)}.
\end{align*}
Here, for each $r\in R(I,J)$,
$T_r(U_I,V_J)$ is the family
\begin{align*}
T_r(U_I,V_J)\coloneqq
(U_i\otimes V_j)_{(i,j)\in r}
\end{align*}
indexed by the set $r=\{(i,j),\dots\}$.
Remark that there is a bijection
\begin{align*}
R(I,J,K,L)\bijection\!\!\bigsqcup_{\substack{r\in R(I,J)\\s\in R(K,L)}}\!\!R(r,s)
\end{align*}
where $R(r,s)$ denotes the set of recollements between the sets
$r=\{(i,j),\dots\}$ and $s=\{(k,l),\dots\}$.
Via this bijection
a recollement $u\in R(I,J,K,L)$ correspond to
$u'\in R(\pi_{1,2}(u),\pi_{3,4}(u))$ which satisfies
$((i,j),(k,l))\in u'$ if and only if $(i,j,k,l)\in u$.
So using this bijection
the morphisms between tensor products are given by
\begin{align*}
\Hom_{\cW_d(\cC)}(\br{U_I}\otimes\br{V_J},\br{W_K}\otimes\br{X_L})
\simeq\!\!\bigoplus_{\substack{u\in R(I,J,K,L)\\\#u\leq d}}\!\!\!\!H_u(U_I,V_J;W_K,X_L)
\end{align*}
where for each $u\in R(I,J,K,L)$,
\begin{align*}
H_u(U_I,V_J;W_K,X_L)
&\coloneqq
H_{u'}(T_{\pi_{1,2}(u)}(U_I,V_J);T_{\pi_{3,4}(u)}(W_K,X_L))
\\&\simeq
\!\!\bigotimes_{(i,j,k,l)\in u}\!\!
\Hom_\cC(U_i\otimes V_j,W_k\otimes X_l).
\end{align*}
The proof of the next lemma is same as that of Lemma~\ref{lem:complaw}.

\begin{lemma}
For $\Phi\in H_r(U_I;W_K)$ and $\Psi\in H_s(V_J;X_L)$,
\begin{gather*}
\br{\Phi}\otimes\br{\Psi}=
\!\!\sum_{u\in R(r\otimes s)}\!\!\br{\Phi\otimes_u\Psi}.
\end{gather*}
Here,
\begin{gather*}
R(r\otimes s)\coloneqq\{u\in R(I,J,K,L)\;|\;\pi_{1,3}(u)=r,\;\pi_{2,4}(u)=s\}
\end{gather*}
and $\Phi\otimes_u\Psi\in H_u(U_I,V_J;W_K,X_L)$ is obtained by composing terms of $\Phi\otimes\Psi$
using tensor products
\begin{gather*}
\Hom_\cC(U_i,W_k)\otimes\Hom_\cC(V_j,X_l)\to\Hom_\cC(U_i\otimes V_j,W_k\otimes X_l)
\end{gather*}
for all $(i,j,k,l)\in u$.
\end{lemma}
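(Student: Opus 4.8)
The plan is to follow the proof of Lemma~\ref{lem:complaw} line for line, computing matrix entries in $\cC^{\barboxtimes d}$ and reading off coefficients; the sole difference is that the polynomial $P_u$ is replaced by the constant $1$. First I would clear away the degenerate cases: if any of $\#I,\#J,\#K,\#L$ exceeds $d$ then $\br{U_I}$, $\br{V_J}$, $\br{W_K}$ or $\br{X_L}$ is zero and both sides vanish; and since $\#u\ge\max(\#I,\#J,\#K,\#L)$ forces $\br{\Phi\otimes_u\Psi}=0$ whenever $\#u>d$, the right-hand sum effectively ranges only over those $u\in R(r\otimes s)$ with $\#u\le d$.

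Next I would use that the tensor product in $\cC^{\barboxtimes d}$ is taken slot by slot on the $d$ tensor factors, so on matrices it is entrywise. Record a cell of $\br{U_I}\otimes\br{V_J}$ by a placement $\mathbf a$ of $I$ together with a placement $\mathbf b$ of $J$ into the $d$ slots, and a cell of $\br{W_K}\otimes\br{X_L}$ by placements $\mathbf c$ of $K$ and $\mathbf e$ of $L$. Then the matrix entry of $\br{\Phi}\otimes\br{\Psi}$ at $((\mathbf a,\mathbf b)\to(\mathbf c,\mathbf e))$ is the tensor product of the entry of $\br{\Phi}$ at $(\mathbf a\to\mathbf c)$ and the entry of $\br{\Psi}$ at $(\mathbf b\to\mathbf e)$, after reordering tensor terms. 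By the adapted-cell description of $\br{\bullet}$ this is nonzero precisely when $(\mathbf a,\mathbf c)$ is adapted to $r$ and $(\mathbf b,\mathbf e)$ is adapted to $s$, in which case it equals $\Phi\otimes\Psi$ grouped slotwise.

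Since the morphism $\br{\Phi}\otimes\br{\Psi}$ is $\fS_d$-invariant and lies in the image of $\br{\bullet}$, I would write it as $\sum_u a_u\br{\Phi\otimes_u\Psi}$ with $a_u\in\NN$ (the $a_u$ counting cells, as in Lemma~\ref{lem:complaw}) and isolate each coefficient entrywise. The bijection $R(I,J,K,L)\bijection\bigsqcup_{r_0\in R(I,J),\,s_0\in R(K,L)}R(r_0,s_0)$ shows that a fixed nonzero cell determines $u$ uniquely: one reads off $(i,j,k,l)\in u$ exactly when the tokens $i,j,k,l$ occupy a common slot, and the nonvanishing conditions of the previous step force $\pi_{1,3}(u)=r$ and $\pi_{2,4}(u)=s$, i.e.\ $u\in R(r\otimes s)$. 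Fixing such a $u$ and one pair of adapted cells, the entrywise formula yields the value $\Phi\otimes\Psi$ with \emph{no} summation; this is exactly where the argument diverges from Lemma~\ref{lem:complaw}, in which the intermediate family $V_J$ was summed over and the orphans $(\varnothing,j,\varnothing)$ could be scattered among the empty slots, producing $P_u(d)$. Here every token of $u$ already meets the boundary data $I,J,K,L$, so nothing remains free and $a_u=1$.

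The one place demanding care --- and the main obstacle --- is the bookkeeping that matches placements against the combinatorics of $u$: I must verify that, under the above bijection, the term $\br{\Phi\otimes_u\Psi}\in\bigbr{H_u(U_I,V_J;W_K,X_L)}$ contributes $\Phi\otimes\Psi$ at precisely the cells adapted to the induced recollement $u'\in R(\pi_{1,2}(u),\pi_{3,4}(u))$ and nowhere else, so that distinct $u$ have disjoint supports and their coefficients cannot interfere. With this matching in hand, comparing the two sides entry by entry gives the claimed identity.
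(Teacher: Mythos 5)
Your proposal is correct and is essentially the paper's own proof: the paper simply states that the proof is the same as that of Lemma~\ref{lem:complaw}, and you have carried out exactly that matrix-entry computation in $\cC^{\barboxtimes d}$, correctly identifying the one point of divergence --- since the tensor product involves no summation over an intermediate family, there are no orphans $(\varnothing,j,\varnothing)$ to place in empty slots, so each coefficient $a_u$ equals $1$ rather than $P_u(d)$. Your final bookkeeping point (that a nonzero entry determines $u$ uniquely from the slotwise quadruples, so distinct $u$ have disjoint supports) is precisely the injectivity observation that makes the coefficient extraction valid, mirroring the role of the injectivity of $\pi_{1,3}$ on $R(s\circ r)$ in Lemma~\ref{lem:complaw}.
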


\begin{definition}
We define tensor products on $\cS_t(\cC)$ in the same manner as above:
for families $U_I$ and $V_J$ of objects in $\cC$,
\begin{align*}
\ag{U_I}\otimes\ag{V_J}
&\coloneqq\!\bigoplus_{r\in R(I,J)}\!\bigag{T_r(U_I,V_J)}
\intertext{and for morphisms $\Phi\in H_r(U_I;W_K)$ and $\Psi\in H_s(V_J;X_L)$,}
\ag{\Phi}\otimes\ag{\Psi}&\coloneqq
\!\sum_{u\in R(r\otimes s)}\!\ag{\Phi\otimes_u\Psi}.
\end{align*}
This tensor product induces a structure of $\kk$-tensor category
to $\cS_t(\cC)$ and
we have an enriched 2-functor $\cS_t\colon\tCat_\kk\to\tPsCat_\kk$.
For $d\in\NN$, $\cS_d(\cC)\to\cW_d(\cC)$ induces a $\kk$-tensor functor.
\end{definition}

The generalized formula for $m$-fold tensor products is as follows.
The symbols
\begin{align*}
T_r(U_{I_1},\dots,U_{I_m})&\quad\text{for}\quad r\in R(I_1,\dots,I_m),\\
H_r(U_{I_1},\dots,U_{I_m};V_{J_1},\dots,V_{J_n})&\quad\text{for}\quad r\in R(I_1,\dots,I_m,J_1,\dots,J_n)
\end{align*}
are defined in the same manner as in the case $m=n=2$.
\begin{lemma}
Let $U_{I_1},\dots,U_{I_m},V_{J_1},\dots,V_{J_n}$ be families of objects in $\cC$. Then
\begin{align*}
\ag{U_{I_1}}\otimes\dots\otimes\ag{U_{I_m}}
&\simeq\mspace{-15mu}\bigoplus_{r\in R(I_1,\dots,I_m)}\mspace{-12mu}\bigag{T_r(U_{I_1},\dots,U_{I_m})},\\
\Hom_{\cS_t(\cC)}(\ag{U_{I_1}}\otimes\dots\otimes\ag{U_{I_m}},\ag{V_{J_1}}\otimes\dots\otimes\ag{V_{J_n}})
&\simeq\mspace{-36mu}\bigoplus_{r\in R(I_1,\dots,I_m,J_1,\dots,J_n)}
\mspace{-33mu}\bigag{H_r(U_{I_1},\dots,U_{I_m};V_{J_1},\dots,V_{J_n})}.
\end{align*}
\end{lemma}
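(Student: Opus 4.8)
The plan is to prove the object isomorphism first, by induction on $m$, and then to deduce the $\Hom$-formula from it by a purely formal argument. A useful preliminary observation is that, since the lemma asserts only isomorphisms of objects and of the underlying $\kk$-modules of morphisms, the polynomial coefficients $P_u(t)$ that govern composition in $\cS_t(\cC)$ will never enter; everything reduces to bookkeeping about recollements and direct sums.

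For the object isomorphism I would run induction on $m$, the base case $m=2$ being exactly the definition of the tensor product on $\cS_t(\cC)$. For the inductive step I would use Mac~Lane coherence to bracket the product as $(\ag{U_{I_1}}\otimes\dots\otimes\ag{U_{I_{m-1}}})\otimes\ag{U_{I_m}}$, apply the inductive hypothesis to the first factor, and expand the outer binary product. The combinatorial heart of the step is the bijection
\begin{align*}
R(I_1,\dots,I_m)\bijection
\!\!\!\!\bigsqcup_{r'\in R(I_1,\dots,I_{m-1})}\!\!\!\!R(r',I_m),
\end{align*}
which restricts a recollement of $I_1,\dots,I_m$ to the first $m-1$ sets and records how the resulting classes are glued to $I_m$; this is the iterated analogue of the bijection $R(I,J,K,L)\bijection\bigsqcup_{r,s}R(r,s)$ already used in the binary case. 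Under it I would check directly from the definitions that $T_u(T_{r'}(U_{I_1},\dots,U_{I_{m-1}}),U_{I_m})=T_r(U_{I_1},\dots,U_{I_m})$ for the recollement $r$ corresponding to $u$, the identification $(U_{i_1}\otimes\dots\otimes U_{i_{m-1}})\otimes U_{i_m}\simeq U_{i_1}\otimes\dots\otimes U_{i_m}$ again being coherence. Summing over $r'$ and then over $u$ reassembles the direct sum indexed by $R(I_1,\dots,I_m)$.

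Once the object isomorphism is available for both families, I would obtain the $\Hom$-formula formally. Decomposing source and target into brackets, using additivity of $\Hom$ and the basic identity $\Hom_{\cS_t(\cC)}(\ag{A_P},\ag{B_Q})\simeq\bigoplus_{w\in R(P,Q)}\bigag{H_w(A_P;B_Q)}$, the morphism space becomes a triple sum over $r\in R(I_1,\dots,I_m)$, $s\in R(J_1,\dots,J_n)$ and $w\in R(r,s)$ of the modules $H_w(T_r(U_\bullet);T_s(V_\bullet))$. The generalized bijection $R(I_1,\dots,I_m,J_1,\dots,J_n)\bijection\bigsqcup_{r,s}R(r,s)$, of the same shape as before, reindexes this by recollements of all the $I_a$ and $J_b$ at once, and by the very definition of the generalized $H_w$ each summand is identified with $H_w(U_{I_1},\dots,U_{I_m};V_{J_1},\dots,V_{J_n})$, since the tensor factors $\Hom_\cC(U_{i_1}\otimes\dots\otimes U_{i_m},V_{j_1}\otimes\dots\otimes V_{j_n})$ match termwise.

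The main obstacle I anticipate is not conceptual but organizational: verifying that these recollement bijections are compatible with the formation of $T_r$ and $H_r$, a sort of associativity of the gluing operation. This will be routine but notation-heavy, and is entirely parallel to the binary case already carried out; the coherence identifications it requires are all furnished by Mac~Lane's theorem, so I expect no genuinely new difficulty to arise.
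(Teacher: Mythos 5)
Your proposal is correct and matches the paper's (implicit) argument: the paper states this lemma without proof, presenting it as the evident iteration of the binary case, and your induction via the bijection $R(I_1,\dots,I_m)\bijection\bigsqcup_{r'}R(r',I_m)$ together with the reindexing $R(I_1,\dots,I_m,J_1,\dots,J_n)\bijection\bigsqcup_{r,s}R(r,s)$ is exactly the intended generalization of the bijections the paper uses for $m=n=2$. Your observation that the coefficients $P_u(t)$ never enter (only $\kk$-module and object isomorphisms are asserted, with the generalized $H_r$ defined so that the final identification is essentially tautological) is also accurate.
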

By specializing it to the case that the all families are of size one,
we get:
\begin{corollary}
\label{cor:partition_hom}
For $U_1,\dots,U_m,V_1,\dots,V_n\in\cC$,
\begin{align*}
\ag{U_1}\otimes\dots\otimes\ag{U_m}
&\simeq\bigoplus_{p\in P(m)}\ag{T_p(U_1,\dots,U_m)},\\
\Hom_{\cS_t(\cC)}(\ag{U_1}\otimes\dots\otimes\ag{U_m},\ag{V_1}\otimes\dots\otimes\ag{V_n})
&\simeq\mspace{-5mu}\bigoplus_{p\in P(m,n)}\mspace{-4mu}\bigag{H_p(U_1,\dots,U_m;V_1,\dots,V_n)}.
\end{align*}
Here $P(m)=P(\{1,\dots,m\})$ and $P(m,n)\coloneqq P(\{1,\dots,m\}\sqcup\{1',\dots,n'\})$.
\end{corollary}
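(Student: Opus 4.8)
The plan is to deduce both isomorphisms directly from the preceding lemma by specializing every index family to a singleton. Concretely, I would set $I_a=\{a\}$ for $a=1,\dots,m$ and $J_b=\{b'\}$ for $b=1,\dots,n$, so that the single object of the family $U_{I_a}$ is $U_a$ (whence $\ag{U_{I_a}}=\ag{U_a}$) and likewise $\ag{V_{J_b}}=\ag{V_b}$. Under these choices the relevant disjoint unions become $I_1\sqcup\dots\sqcup I_m=\{1,\dots,m\}$ and $I_1\sqcup\dots\sqcup I_m\sqcup J_1\sqcup\dots\sqcup J_n=\{1,\dots,m\}\sqcup\{1',\dots,n'\}$, which are exactly the sets underlying $P(m)$ and $P(m,n)$.

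The one genuine point to verify is that for singleton families the defining condition of a recollement becomes vacuous. Recall that $r\in R(I_1,\dots,I_l)$ is a partition of $I_1\sqcup\dots\sqcup I_l$ whose restriction to each $I_a$ is injective into the quotient; when every $I_a$ has a single element this injectivity is automatic. Hence I would record the identifications of index sets $R(I_1,\dots,I_m)=P(\{1,\dots,m\})=P(m)$ and $R(I_1,\dots,I_m,J_1,\dots,J_n)=P(\{1,\dots,m\}\sqcup\{1',\dots,n'\})=P(m,n)$, as plain equalities.

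It then remains to match the summands. A block of a partition $p\in P(m)$ corresponds to a tuple $(i_1,\dots,i_m)\in r$ with each $i_a\in\{a,\varnothing\}$; using the convention $U_\varnothing=\unit_\cC$ together with the unit constraints, the associated tensor $U_{i_1}\otimes\dots\otimes U_{i_m}$ collapses to $\bigotimes_{a\in B}U_a$ over the members $a$ of that block, so $T_r(U_{I_1},\dots,U_{I_m})$ becomes precisely $T_p(U_1,\dots,U_m)$. The identical bookkeeping identifies each $H_r(U_{I_1},\dots,U_{I_m};V_{J_1},\dots,V_{J_n})$ with $H_p(U_1,\dots,U_m;V_1,\dots,V_n)$. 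Substituting these identifications into the two displays of the lemma yields the two displays of the corollary.

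I do not anticipate a serious obstacle: the entire mathematical content is already carried by the preceding lemma, and the specialization is pure bookkeeping. The only step demanding a moment's care is the vacuity of the recollement condition for singletons, since that is exactly what converts the recollement sets $R(-)$ into the honest partition sets $P(m)$ and $P(m,n)$ appearing in the statement.
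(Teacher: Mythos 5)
Your proposal is correct and takes essentially the same route as the paper, which derives the corollary precisely by specializing the preceding lemma to the case where all families are singletons; the key observation you isolate---that the injectivity condition defining a recollement is vacuous for singleton families, so that $R(I_1,\dots,I_m)=P(m)$ and $R(I_1,\dots,I_m,J_1,\dots,J_n)=P(m,n)$---is exactly the content the paper leaves implicit. The remaining identifications of $T_r$ with $T_p$ and of $H_r$ with $H_p$ (using the convention $U_\varnothing=\unit_\cC$ and the unit constraints) are the same routine bookkeeping.
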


Note that the object $\ag{U_1,\dots,U_m}$
is obtained as a direct summand of $\ag{U_1}\otimes\dots\otimes\ag{U_m}$
by the corollary above.
Thus $\cS_t(\cC)$ is also generated by objects of this form.


\subsection{Base change}
Let $r\in R(I,J)$ be a recollement
between finite sets $I$ and $J$.
As before, we regard $r$ as a set
$r=\{(i,j),\dots\}$.
This set is naturally identified with the pushout
$I\sqcup J/{\sim_r}$.
Conversely, for such $r$, let us denote by $\overline r$
the pullback
\begin{gather*}
\overline r\coloneqq\{(i,j)\in I\times J\;|\;i\sim_r j\}
=\{(i,j)\in r\;|\;i,j\neq\varnothing\}.
\end{gather*}
So $I$, $J$, $r$ and $\overline r$ form a cartesian and cocartesian square
\begin{gather*}
\xymatrix @C 16pt @R 16pt{
&\overline r\ar@{_(->}[ld]\ar@{^(->}[rd]&\\
I\ar@{^(->}[rd]&&J\ar@{_(->}[ld]\\
&r&
}
\end{gather*}
in the category of finite sets.
Remark that there are bijections
\begin{align*}
R(I,J)
&\bijection\{\text{set $r$ with injective maps }I\hookrightarrow r,J\hookrightarrow r\text{ such that }I\sqcup J\to r\text{ is surjective}\}/{\sim}\\
&\bijection\{\text{set $\overline r$ with injective maps }\overline r\hookrightarrow I,\overline r\hookrightarrow J\}/{\sim}.
\end{align*}

Let $U_I,V_J$ be families of objects in $\cC$.
Take a recollement $r\in R(I,J)$ and write
\begin{gather*}
r=\{(i,j),\dots,(i',\varnothing),\dots,(\varnothing,j'),\dots\}
\end{gather*}
where $i,i',\dotsc\in I$ and $j,j',\dotsc\in J$.
Using this representation, let us write
\begin{align*}
U_I&=(U_i,\dots,U_{i'},\dots),&
V_J&=(V_j,\dots,V_{j'},\dots).
\intertext{respectively. Let us introduce four families}
U_r&\coloneqq(U_i,\dots,U_{i'},\dots,V_{j'},\dots),&
U_{\overline r}&\coloneqq(U_i,\dots),\\
V_r&\coloneqq(V_j,\dots,U_{i'},\dots,V_{j'},\dots),&
V_{\overline r}&\coloneqq(V_j,\dots)
\end{align*}
indexed by the sets $r$ and $\overline r$ respectively.

Take an element $\Phi\in H_r(U_I;V_J)$ of the form
\begin{gather*}
\Phi=\varphi_{i,j}^{(1)}\otimes\dots\otimes\varphi_{i'}^{(2)}\otimes\dots\otimes\varphi_{j'}^{(3)}\otimes\dotsb
\end{gather*}
where $\varphi_{i,j}^{(1)}\colon U_i\to V_j$,
$\varphi_{i'}^{(2)}\colon U_{i'}\to\unit_\cC$ and
$\varphi_{j'}^{(3)}\colon\unit_\cC\to V_{j'}$.
By the composition law in $\cS_t(\cC)$, we have that the map $\ag{\Phi}\colon\ag{U_I}\to\ag{V_J}$
factors through $\ag{U_r}$ and $\ag{V_r}$; that is, the composite
\begin{gather*}
\xymatrix{
\ag{U_I} \ar[rd]_-{\ag{\dotsb\otimes\varphi_{j'}^{(3)}\otimes\dotsb}}&&&&
\ag{V_J} \\&
\ag{U_r} \ar[rr]_-{\ag{\dotsb\otimes\varphi_{i,j}^{(1)}\otimes\dotsb}}&&
\ag{V_r} \ar[ru]_-{\ag{\dotsb\otimes\varphi_{i'}^{(2)}\otimes\dotsb}}&
}
\end{gather*}
is equal to $\ag{\Phi}$.
Now let us consider another composite which goes through $\ag{U_{\overline r}}$ and $\ag{V_{\overline r}}$:
\begin{gather*}
\xymatrix{
&
\ag{U_{\overline r}} \ar[rr]^-{\ag{\dotsb\otimes\varphi_{i,j}^{(1)}\otimes\dotsb}}&&
\ag{V_{\overline r}} \ar[rd]^-{\ag{\dotsb\otimes\varphi_{j'}^{(3)}\otimes\dotsb}}&\\
\ag{U_I} \ar[ru]^-{\ag{\dotsb\otimes\varphi_{i'}^{(2)}\otimes\dotsb}}&&&&
\ag{V_J}. \\
}
\end{gather*}
We denote this morphism by the symbol $\aag{\Phi}$.
By the composition law, we get the formula
\begin{gather*}
\aag{\Phi}=\sum_{s\leq r}\ag{\Phi|_s}
\end{gather*}
immediately. Here, for each recollement $s\leq r$, $\Phi|_s\in H_s(U_I;V_J)$
is obtained by composing terms of $\Phi$ using
\begin{gather*}
\Hom_\cC(U_i,\unit_\cC)\otimes\Hom_\cC(\unit_\cC,V_j)\to\Hom_\cC(U_i,V_j)
\end{gather*}
for each $i\in I$ and $j\in J$ such that $i\not\sim_r j$ but $i\sim_s j$.
Thus we have another isomorphism $\aag{\bullet}\colon H(U_I;V_J)\to\Hom_{\cS_t(\cC)}(\ag{U_I},\ag{V_J})$
and morphisms of the form $\aag{\Phi}$ also form a basis of $\Hom_{\cS_t(\cC)}(\ag{U_I},\ag{V_J})$.

Conversely, we can explicitly represent a morphism of the form $\ag{\Phi}$
as a linear combination of morphisms $\aag{\Psi}$.
For each recollements $s\leq r$, their M\"obius function is given by
$\mu(s,r)=(-1)^{\#r-\#s}$
since the subset $\{u\in R(I,J)\;|\;s\leq u\leq r\}$ is isomorphic to
the power set of a set of order $\#r-\#s$
as partially ordered set.
Thus we have the inverse formula
\begin{gather*}
\ag{\Phi}=\sum_{s\leq r}(-1)^{\#r-\#s}\aag{\Phi|_s}.
\end{gather*}

Now let us take two morphisms $\aag{\Phi}\colon\ag{U_I}\to\ag{V_J}$ and
$\aag{\Psi}\colon\ag{V_J}\to\ag{W_K}$
and calculate the composite of them.
Let $\Phi\in H_r(U_I;V_J)$ and $\Psi\in H_s(V_J;W_K)$ be
\begin{align*}
\Phi&=\varphi_{i,j}^{(1)}\otimes\dots\otimes\varphi_{i'}^{(2)}\otimes\dots\otimes\varphi_{j'}^{(3)}\otimes\dots\\
\Psi&=\psi_{j,k}^{(1)}\otimes\dots\otimes\psi_{j'}^{(2)}\otimes\dots\otimes\psi_{k'}^{(3)}\otimes\dots
\end{align*}
as same as before.
Let $J_1\subset J$ be the union of images $\overline r\hookrightarrow J$ and $\overline s\hookrightarrow J$
and denote by $V_{J_1}$ the subfamily of $V_J$ indexed by $J_1$.
By the composition law, the composite $\ag{V_{\overline r}}\to\ag{V_J}\to\ag{V_{\overline s}}$
is equal to the scalar multiple of the composite $\ag{V_{\overline r}}\to\ag{V_{J_1}}\to\ag{V_{\overline s}}$.
Here, its scalar coefficient is given by
\begin{gather*}
P_{r,s}(t)\!\!\prod_{j'\in J\setminus J_1}\!\!\psi_{j'}^{(2)}\circ\varphi_{j'}^{(3)}
\end{gather*}
where $P_{r,s}$ is the polynomial
\begin{gather*}
P_{r,s}(T)\coloneqq
\!\!\prod_{\#J_1\leq a<\#J}\!\!\!\!(T-a)
=(T-\#J_1)\dotsm(T-\#J+1)
\end{gather*}
and we regard
each $\psi_{j'}^{(2)}\circ\varphi_{j'}^{(3)}\colon\unit_\cC\to V_{j'}\to\unit_\cC$
as scalar via $\End_\cC(\unit_\cC)\simeq\kk$.
Then we can complete the square
\begin{gather*}
\xymatrix @C 14pt @R 12pt{
&&\ag{V_{\overline u}}\ar@{-->}[rd]&\\
\dotsb\ar[r]&
\ag{V_{\overline r}}\ar[rd]\ar@{-->}[ru]&&
\ag{V_{\overline s}}\ar[r]&\dotsb\\
&&\ag{V_{J_1}}\ar[ru]&&\\
}
\end{gather*}
using the base change formula.
To apply the formula, we regard $J_1$ as a recollement $J_1\in R(\overline r,\overline s)$
via the injective maps $\overline r\hookrightarrow J_1$ and $\overline s\hookrightarrow J_1$.
The sum is taken over all recollements $u\in R(\overline r,\overline s)$ such that
$u\leq J_1$.
Taken together, we obtain the formula in the next proposition.

For $u\in R(\overline r,\overline s)$, let us denote by $u'\in R(I,K)$ the induced recollement on $I$ and $K$ by
the injective maps $\overline u\hookrightarrow\overline r\hookrightarrow I$
and $\overline u\hookrightarrow\overline s\hookrightarrow K$.
Let $s\circ r$ be the maximal element of $R(s\circ r)$,
i.e.\ the equivalent relation on $I\sqcup J\sqcup K$ generated by $r$ and $s$,
so $J_1'=\pi_{1,3}(s\circ r)$.

\begin{proposition}
Let $r\in R(I,J)$, $s\in R(J,K)$,
$\Phi\in H_r(U_I;V_J)$ and $\Psi\in H_s(V_J,W_K)$ as above.
Put $\Xi\coloneqq\Psi\circ_{(s\circ r)}\Phi\in H_{J_1'}(H_I,W_K)$.
Then
\begin{gather*}
\aag{\Psi}\circ\aag{\Phi}=
P_{r,s}(t)
\sum_{u\leq J_1}(-1)^{\#J_1-\#u}
\aag{\Xi|_{u'}}.
\end{gather*}
\end{proposition}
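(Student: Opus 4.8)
The plan is to compute $\aag{\Psi}\circ\aag{\Phi}$ by unfolding the factorizations that define the double-bracket symbols and then collapsing the middle via the base change formula. Writing out both composites, the morphism $\aag{\Psi}\circ\aag{\Phi}\colon\ag{U_I}\to\ag{W_K}$ passes successively through $\ag{U_{\overline r}}$, $\ag{V_{\overline r}}$, $\ag{V_J}$, $\ag{V_{\overline s}}$ and $\ag{W_{\overline s}}$, where the outermost maps only create or annihilate orphans in $I$ and $K$ while the genuine morphisms $\varphi^{(1)}_{i,j}$ and $\psi^{(1)}_{j,k}$ act on the $\ag{U_{\overline r}}\to\ag{V_{\overline r}}$ and $\ag{V_{\overline s}}\to\ag{W_{\overline s}}$ legs. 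The computation thus reduces to the middle composite $\ag{V_{\overline r}}\to\ag{V_J}\to\ag{V_{\overline s}}$, since the outer genuine morphisms will eventually recombine, through $\circ_{(s\circ r)}$, into the terms of $\Xi=\Psi\circ_{(s\circ r)}\Phi$.

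First I would apply Lemma~\ref{lem:complaw} to this middle composite. The index set $J$ splits as $J=J_1\sqcup(J\setminus J_1)$, where $J_1$ is the union of the images of $\overline r$ and $\overline s$; each $j'\in J\setminus J_1$ is an orphan from both sides and hence contributes a scalar loop $\psi^{(2)}_{j'}\circ\varphi^{(3)}_{j'}\colon\unit_\cC\to V_{j'}\to\unit_\cC$. Exactly as in the proof of Lemma~\ref{lem:complaw}, the freedom in placing these $\#J-\#J_1$ doubly-orphaned terms yields the polynomial factor $P_{r,s}(t)$, reducing the middle composite to $P_{r,s}(t)\prod_{j'\in J\setminus J_1}(\psi^{(2)}_{j'}\circ\varphi^{(3)}_{j'})$ times the composite $\ag{V_{\overline r}}\to\ag{V_{J_1}}\to\ag{V_{\overline s}}$. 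I note that these scalar loops are precisely the contributions of the orphans $(\varnothing,j',\varnothing)$ of $s\circ r$, so they are already absorbed into $\Xi$.

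Next, regarding $J_1$ as a recollement in $R(\overline r,\overline s)$ via the injections $\overline r\hookrightarrow J_1$ and $\overline s\hookrightarrow J_1$, the reduced middle composite is a single-bracket morphism of the form $\ag{\bullet}$ at recollement $J_1$. I would then invoke the inverse M\"obius formula $\ag{\Phi}=\sum_{s\le r}(-1)^{\#r-\#s}\aag{\Phi|_s}$, with $r$ replaced by $J_1$, to rewrite it as $\sum_{u\le J_1}(-1)^{\#J_1-\#u}\aag{\bullet|_u}$, thereby completing the square through the pullbacks $\ag{V_{\overline u}}$. Sandwiching each resulting term between the outer orphan and genuine maps then identifies it with $\aag{\Xi|_{u'}}$, where $u'\in R(I,K)$ is the recollement induced by $u$; assembling the pieces gives the stated formula.

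The main obstacle will be the bookkeeping of the last two steps. One must verify that the reduced middle composite is genuinely of the base-change form to which the inverse M\"obius formula applies, and---most delicately---that sandwiching each $\aag{\bullet|_u}$ with the outer legs reproduces exactly $\aag{\Xi|_{u'}}$, with the correct induced recollement $u'$ and without any spurious extra polynomial or scalar factor. Concretely, one has to check that composing the genuine morphisms along the maximal chains of $s\circ r$ and then restricting to $u'$ agrees with first restricting the middle recollement to $u\le J_1$ and only afterwards composing.
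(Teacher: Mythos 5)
Your proposal follows the paper's own derivation essentially verbatim: the paper likewise factors $\aag{\Psi}\circ\aag{\Phi}$ through $\ag{U_{\overline r}}$, $\ag{V_{\overline r}}$, $\ag{V_J}$, $\ag{V_{\overline s}}$, $\ag{W_{\overline s}}$, reduces to the middle composite $\ag{V_{\overline r}}\to\ag{V_J}\to\ag{V_{\overline s}}$, extracts the factor $P_{r,s}(t)$ together with the scalar loops $\psi^{(2)}_{j'}\circ\varphi^{(3)}_{j'}$ (absorbed into $\Xi$) via the composition law, and then completes the square through $\ag{V_{\overline u}}$ for $u\leq J_1\in R(\overline r,\overline s)$ using the M\"obius-inverted base change formula. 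The bookkeeping checks you flag at the end (restrict-then-compose versus compose-then-restrict, and the absence of spurious polynomial factors in the sandwiching step) are correct concerns but are exactly what the paper itself leaves implicit in its ``taken together'' conclusion, so your argument is complete at the same level of detail as the original.
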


The inequality
$\#\overline r,\#\overline s\geq\#\overline u=\#\overline{u'}$
for $r$, $s$ and $u$ above gives us the next corollary.
\begin{corollary}
\label{cor:ideal}
Let $U_I$, $V_J$ and $W_K$ be families of objects in $\cC$.
Take $d,e\in\NN$ and let $f\coloneqq\max\{d+\#K,e+\#I\}-\#J$. Then
\begin{gather*}
\bigaag{H^{\geq e}(V_J,W_K)}\;\circ\;\bigaag{H^{\geq d}(U_I,V_J)}\;\subset\;
\bigaag{H^{\geq f}(U_I,W_K)}.
\end{gather*}
In particular, $\bigaag{H^{\geq d}(U_I,U_I)}$ is a two-sided ideal of $\End_{\cS_t(\cC)}(\ag{U_I})$ for any $d$.
\end{corollary}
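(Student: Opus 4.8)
The plan is to derive this as a degree count from the composition formula in the preceding proposition, tracking a single numerical invariant. Since composition in $\cS_t(\cC)$ is $\kk$-bilinear and $\aag{\bullet}$ is linear, I would first reduce to homogeneous arguments: writing $H^{\geq d}(U_I;V_J)=\bigoplus_{\#r\geq d}H_r(U_I;V_J)$, it suffices to show that for $\Phi\in H_r(U_I;V_J)$ with $\#r\geq d$ and $\Psi\in H_s(V_J;W_K)$ with $\#s\geq e$, every term of $\aag{\Psi}\circ\aag{\Phi}$ lies in $\aag{H^{\geq f}(U_I;W_K)}$. The scalar coefficients $P_{r,s}(t)$ and $(-1)^{\#J_1-\#u}$ are irrelevant, since $\aag{H^{\geq f}(U_I;W_K)}$ is a $\kk$-submodule.

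The main bookkeeping tool is the complementary-degree relation: for $r\in R(I,J)$ the injections $I\hookrightarrow r$ and $J\hookrightarrow r$ have surjective union $I\sqcup J\to r$ and overlap exactly on $\overline r$, so inclusion--exclusion gives $\#r=\#I+\#J-\#\overline r$. Hence grading by the length $\#r$ is complementary to grading by the size $\#\overline r$ of the pullback, and the hypotheses become $\#\overline r\leq\#I+\#J-d$ and $\#\overline s\leq\#J+\#K-e$.

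Next I would feed these into the composition formula, whose terms are indexed by $u\in R(\overline r,\overline s)$ and have the shape $\aag{\Xi|_{u'}}$ with $u'\in R(I,K)$. By construction $\overline{u'}=\overline u$, so the same complementary-degree relation applied to $u'$ gives $\#u'=\#I+\#K-\#\overline u$. Since $u\in R(\overline r,\overline s)$ supplies injections $\overline u\hookrightarrow\overline r$ and $\overline u\hookrightarrow\overline s$, we have $\#\overline u\leq\min\{\#\overline r,\#\overline s\}$, which is exactly the inequality recorded just before the corollary. Combining these,
\begin{align*}
\#u'=\#I+\#K-\#\overline u
&\geq\max\{\#r+\#K-\#J,\;\#s+\#I-\#J\}\\
&\geq\max\{d+\#K,\;e+\#I\}-\#J=f,
\end{align*}
so $\Xi|_{u'}\in H^{\geq f}(U_I;W_K)$ and each term lies in $\aag{H^{\geq f}(U_I;W_K)}$, establishing the inclusion.

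For the final assertion I would specialize to $U_I=V_J=W_K$: taking $e=0$ makes $f=d$, yielding the left-ideal property, and taking $d=0$ makes $f=e$, yielding the right-ideal property, so $\aag{H^{\geq d}(U_I;U_I)}$ is a two-sided ideal of $\End_{\cS_t(\cC)}(\ag{U_I})$. I do not expect a genuine obstacle; the substantive work is already carried by the composition formula, and the only delicate point is keeping the two complementary gradings straight and checking that the two separate bounds $\#\overline u\leq\#\overline r$ and $\#\overline u\leq\#\overline s$ produce precisely the $\max$ defining $f$.
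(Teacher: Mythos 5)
Your proof is correct and is essentially the paper's own argument: the paper deduces the corollary from the same composition formula via the single inequality $\#\overline r,\#\overline s\geq\#\overline u=\#\overline{u'}$, which together with $\#u'=\#I+\#K-\#\overline{u'}$ gives exactly your bound $\#u'\geq f$. Your write-up merely makes explicit the reduction to homogeneous pure tensors and the complementary-degree bookkeeping ($\#r=\#I+\#J-\#\overline r$) that the paper leaves implicit.
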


\subsection{Restriction and Induction}
We also interpolate the restriction functors defined in Section~\ref{sec:res_ind}
to arbitrary ranks.
\begin{definition}
Let $\cC$ be a $\kk$-linear category with unit and $t_1,t_2\in\kk$.
Put $t=t_1+t_2$.
We define the functor
$\Res^{\fS_t}_{\fS_{t_1}\times\fS_{t_2}}\colon\cS_t(\cC)\to\cS_{t_1}(\cC)\boxtimes\cS_{t_2}(\cC)$
by
\begin{align*}
\Res^{\fS_t}_{\fS_{t_1}\times\fS_{t_2}}(\ag{U_I})\coloneqq\bigoplus_{I'\subset I}
\ag[t_1]{U_{I'}}\boxtimes\ag[t_2]{U_{I\setminus I'}}.
\end{align*}
The map for morphisms is defined as follows.
Fix subsets $I'\subset I$ and $J'\subset J$ and 
take $r\in R(I,J)$.
Let $r'\in R(I',J')$ and $r''\in R(I\setminus I',J\setminus J')$
be the restricted recollements of $r$ to each subsets.
Then
\begin{gather*}
H_{r'}(U_{I'};V_{J'})\otimes H_{r''}(U_{I\setminus I'};V_{J\setminus J'})
\simeq H_{r'\sqcup r''}(U_I;V_J).
\end{gather*}
Here $r\sqcup r'\in R(I,J)$ is the equivalence relation generated by $r$ and $r'$.
For each $\Phi\in H_r(U_I,V_J)$,
the matrix entry of $\Res^{\fS_t}_{\fS_{t_1}\times\fS_{t_2}}(\ag{\Phi})$ at the cell
\begin{gather*}
\ag[t_1]{U_{I'}}\boxtimes\ag[t_2]{U_{I\setminus I'}}\to
\ag[t_1]{V_{J'}}\boxtimes\ag[t_2]{V_{J\setminus J'}}
\end{gather*}
is defined to be zero if $r\neq r'\sqcup r''$;
otherwise $\sum\ag[t_1]{\Phi'}\boxtimes\ag[t_2]{\Phi''}$
when we write $\Phi=\sum\Phi'\otimes\Phi''$ using
$\Phi'\in H_{r'}(U_{I'};V_{J'})$ and $\Phi''\in H_{r''}(U_{I\setminus I'};V_{J\setminus J'})$.
\end{definition}

\begin{definition}
Let $\cC$ be a $\kk$-linear category with unit, $t_1,t_2\in\kk$
and put $t=t_1 t_2$.
We define the functor $\Res^{\fS_t}_{\fS_{t_1}\wr\fS_{t_2}}\colon\cS_t(\cC)\to\cS_{t_2}(\cS_{t_1}(\cC))$ by
\begin{align*}
\Res^{\fS_t}_{\fS_{t_1}\wr\fS_{t_2}}(\ag{U_I})\coloneqq\bigoplus_{p\in P(I)}
\bigag[t_2]{\ag[t_1]{U_p}}.
\end{align*}
Here, $p$ runs over all partitions of $I$
and $\ag[t_1]{U_p}$ is the family of objects in $\cS_{t_1}(\cC)$
indexed by $p=\{I_1,\dots,I_l\}$:
\begin{gather*}
\ag[t_1]{U_p}\coloneqq(\ag[t_1]{U_{I_1}},\dots,\ag[t_1]{U_{I_l}}).
\end{gather*}
The map for morphisms is defined in the same manner;
the matrix entry of $\Res^{\fS_t}_{\fS_{t_1}\wr\fS_{t_2}}(\ag{\Phi})$
for $\Phi\in H_r(U_I,V_J)$
at the cell
\begin{gather*}
\bigag[t_2]{\ag[t_1]{U_p}}\to\bigag[t_2]{\ag[t_1]{V_q}}
\end{gather*}
is induced from $\Phi$ if $r$ is compatible with $p,q$ and otherwise zero.
\end{definition}

The well-definedness of these functors is proved by the same argument as the previous one:
consider the case for the indeterminate rank $T\in\kk[T]$ and
check that equations hold for all $T=d\gg0$ in $\cW_d(\cC)$.
Note that for a $\kk$-braided tensor category $\cC$, it is easier to define them
using the universality of $\cS_t(\cC)$, see Theorem~\ref{thm:universality}.

On the other hand, it does not seem possible to interpolate
the induction functors to general $t_1,t_2\in\kk$.
For example, if the functor $\Ind^{\fS_t}_{\fS_{t_1}\times\fS_{t_2}}$ exists
it should multiply ``dimensions''
of objects by the binomial coefficient
$t!/(t_1!t_2!)$, which is not a polynomial in $t_1,t_2$.
However, in the special case where
one of the parameters $t_2=d_2\in\NN$ is a natural number
and $d_2!$ is invertible in $\kk$,
we can define associative $*$-product by
\begin{align*}
\cS_{t_1}(\cC)\barboxtimes\cW_{d_2}(\cC)
&\to\cS_{t_1+d_2}(\cC)\\
\ag[t_1]{U_1,\dots,U_m}\boxtimes\br[d_2]{V_1,\dots,V_{d_2}}
&\mapsto\ag[t_1+d_2]{U_1,\dots,U_m,V_1,\dots,V_{d_2}}
\end{align*}
since $\cW_{d_2}(\cC)$ is generated by objects of this form.
This defines the action of $\kk$-tensor category $\cW_{\bullet}(\cC)$
on $\cS_{\bullet}(\cC)\coloneqq\bigoplus_{t\in\kk}\cS_t(\cC)$.

\subsection{\texorpdfstring{$\cS_t$}{St} for Braided Tensor Categories}

If a $\kk$-tensor category $\cC$ has a braiding $\sigma_\cC$
then the 2-functor $\cS_t$ naturally induces a braiding $\sigma_{\cS_t(\cC)}$ of $\cS_t(\cC)$.
Here its component
$\ag{U_I}\otimes\ag{V_J}\eqmap\ag{V_J}\otimes\ag{U_I}$
is the direct sum of isomorphisms
\begin{align*}
\Bigag{\bigotimes_{(i,j)\in r}\!\sigma_\cC(U_i,V_j)}
\colon\bigag{T_r(U_I,V_J)}\eqmap\bigag{T_{\tilde r}(V_J,U_I)}
\end{align*}
for all $r\in R(I,J)$ where $\tilde r\in R(J,I)$ is the corresponding recollement
to $r$ via $I\sqcup J\bijection J\sqcup I$.
Clearly if the braiding $\sigma_\cC$ is symmetric then so is $\sigma_{\cS_t(\cC)}$.

As we have seen, it is too complicated to describe the morphisms in $\cS_t(\cC)$.
But if a braiding $\sigma_\cC$ of the category $\cC$ is given,
we can use a very powerful tool: the graphical representation of morphisms.
First we represent object $\ag{U_1}\otimes\dots\otimes\ag{U_m}$
by labeled points placed side-by-side:
\begin{align*}
\ag{U_1}\otimes\dots\otimes\ag{U_m}=
\begin{xy}
(0,0)*{\bullet}+(0,3)*{U_1};
(8,0)*{\bullet}+(0,3)*{U_2};
(16,0)*{\cdots};
(24,0)*{\bullet}+(0,3)*{U_m};
\end{xy}
.
\end{align*}
When $m=0$, ``no points'' denotes the unit object $\unit_{\cS_t(\cC)}$.
Recall that objects of this form generate the pseudo-abelian category $\cS_t(\cC)$;
so to describe $\cS_t(\cC)$ it suffices to consider morphisms between them.
We represent such morphisms by strings which connect points from top to bottom.

For each morphism $\varphi\colon U\to V$ in $\cC$, we have
$\ag{\varphi}\colon\ag{U}\to\ag{V}$.
We represent it by a string with a label $\inbox{\varphi}$.
If $\varphi=\id_U\colon U\to U$, the label may be omitted: 
\begin{align*}
\ag{\varphi}&=\;
\begin{xy}
(0,9)*={\bullet}="u"+(0,3)*{U},
(0,-9)*={\bullet}="u'"+(0,-3)*{V},
(0,0)*+{\varphi}="p"*\frm{-};
"u";"p" **\dir{-},
"u'";"p" **\dir{-},
\end{xy}\;,&
\begin{xy}
(0,9)*={\bullet}="u"+(0,3)*{U},
(0,-9)*={\bullet}="u'"+(0,-3)*{U},
(0,0)*+{\id_U}="p"*\frm{-};
"u";"p" **\dir{-},
"u'";"p" **\dir{-},
\end{xy}\;&=\;
\begin{xy}
(0,9)*={\bullet}="u"+(0,3)*{U},
(0,-9)*={\bullet}="u'"+(0,-3)*{U},
"u";"u'" **\dir{-},
\end{xy}
\;=\id_{\ag{U}}.
\end{align*}

By definition,
the spaces of morphisms
$\unit_{\cS_t(\cC)}\to\ag{\unit_\cC}$
and $\ag{\unit_\cC}\to\unit_{\cS_t(\cC)}$ are
both isomorphic to $\End_\cC(\unit_\cC)$.
Take morphisms $\iota_\cC$ and
$\epsilon_\cC$ from them respectively
which correspond to $\id_{\unit_\cC}$.
We represent them by broken strings:
\begin{align*}
\iota_\cC&=
\begin{xy}
(0,-7)*={\bullet}="uv"+(0,-3)*{\unit_\cC},
"uv" \ar @{-x} (0,5),
\end{xy},&
\epsilon_\cC&=
\begin{xy}
(0,7)*={\bullet}="uv"+(0,3)*{\unit_\cC},
"uv" \ar @{-x} (0,-5),
\end{xy}.
\end{align*}

As we have seen, $\ag{U\otimes V}$ is a direct summand of $\ag{U}\otimes\ag{V}$.
We denote its retraction by $\mu_\cC(U,V)\colon\ag{U}\otimes\ag{V}\to\ag{U\otimes V}$
and section $\Delta_\cC(U,V)\colon\ag{U\otimes V}\to\ag{U}\otimes\ag{V}$.
We represent them by ramifications of strings:
\begin{align*}
\mu_\cC(U,V)&=
\begin{xy}
(-5,7)*={\bullet}="u"+(0,3)*{U},
(5,7)*={\bullet}="v"+(0,3)*{V},
(0,-7)*={\bullet}="uv"+(0,-3)*{U\otimes V},
"u";0 **\crv{(-5,2)},
"v";0 **\crv{(5,2)},
"uv";0 **\dir{-},
\end{xy},&
\Delta_\cC(U,V)&=
\begin{xy}
(-5,-7)*={\bullet}="u"+(0,-3)*{U},
(5,-7)*={\bullet}="v"+(0,-3)*{V},
(0,7)*={\bullet}="uv"+(0,3)*{U\otimes V},
"u";0 **\crv{(-5,-2)},
"v";0 **\crv{(5,-2)},
"uv";0 **\dir{-},
\end{xy}.
\end{align*}

Let us denote by $\tau_\cC(U,V)$
the braiding $\sigma_{\cS_t(\cC)}(\ag{U},\ag{V})\colon\ag{U}\otimes\ag{V}\eqmap\ag{V}\otimes\ag{U}$
for short.
This morphism is represented by crossing strings.
We distinguish the braiding from its inverse by the sign of the crossing,
the overpass and the underpass:
\begin{align*}
\tau_\cC(U,V)&=\;
\begin{xy}
(0,6)*={\bullet}="u"+(0,3)*{U},
(10,6)*={\bullet}="v"+(0,3)*{V},
(0,-6)*={\bullet}="v'"+(0,-3)*{V},
(10,-6)*={\bullet}="u'"+(0,-3)*{U},
(5,0)*{\hole}="p",
"u";"u'" **\dir{-},
"v";"p" **\dir{-},
"v'";"p" **\dir{-},
\end{xy},&
\tau_\cC^{-1}(U,V)&=
\begin{xy}
(0,6)*={\bullet}="u"+(0,3)*{U},
(10,6)*={\bullet}="v"+(0,3)*{V},
(0,-6)*={\bullet}="v'"+(0,-3)*{V},
(10,-6)*={\bullet}="u'"+(0,-3)*{U},
(5,0)*{\hole}="p",
"v";"v'" **\dir{-},
"u";"p" **\dir{-},
"u'";"p" **\dir{-},
\end{xy}.
\end{align*}

We represent the tensor product of these morphisms by
placing corresponding diagrams side-by-side.
Finally we connect these diagrams from top to bottom to represent the composite of them.

\begin{example}
\label{ex:diagram}
The diagram in the introduction
\begin{align*}
\begin{xy}
(-12,0)*+{\varphi}*\frm{-}="p",
(0,0)*+{\psi}*\frm{-}="q",
(12,0)*+{\xi}*\frm{-}="r",
(-18,14)*={\bullet}="u1"+(0,3)*{U_1},
(0,14)*={\bullet}="u2"+(0,3)*{U_2},
(18,14)*={\bullet}="u3"+(0,3)*{U_3},
(-18,-14)*={\bullet}="v1"+(0,-3)*{V_1},
(-6,-14)*={\bullet}="v2"+(0,-3)*{V_2},
(6,-14)*={\bullet}="v3"+(0,-3)*{V_3},
(18,-14)*={\bullet}="v4"+(0,-3)*{V_4},
(-8,7)*{\hole}="x",
(8.5,-9.5)*{\hole}="y",
"p"+(-1,-5)="p-",
"q"+(0,5)="q+",
"r"+(0,7)="r+",
"p";"p-" **\dir{-},
"q";"q+" **\dir{-},
"r";"r+" **\dir{-} ?>*\dir{x},
"p";"u2" **\crv{(-9,9)},
"q+"."x";"u1" **\crv{(-10,6)}, "q+";"u1"."x" **\crv{(-10,6)},
"q+";"u3" **\crv{(3,10)},
"p-";"v1" **\crv{(-16,-8)},
"p-";"v2" **\crv{(-8,-6)},
"r"."y";"v3" **\crv{(10,-10)}, "r";"v3"."y" **\crv{(10,-10)},
"q";"v4" **\crv{(2,-8)},
\end{xy}
\end{align*}
denotes the composite of morphisms
\begin{align*}
\ag{U_1}\otimes\ag{U_2}\otimes\ag{U_3}
&\xrightarrow{\makebox[102pt]{$\scriptstyle\tau_\cC^{-1}(U_1,U_2)\otimes\id_{\ag{U_3}}$}}
\ag{U_2}\otimes\ag{U_1}\otimes\ag{U_3}\\
&\xrightarrow{\makebox[102pt]{$\scriptstyle\id_{\ag{U_2}}\otimes\mu_\cC(U_1,U_3)\otimes\iota_\cC$}}
\ag{U_2}\otimes\ag{U_1\otimes U_3}\otimes\ag{\unit_\cC}\\
&\xrightarrow{\makebox[102pt]{$\scriptstyle\ag{\varphi}\otimes\ag{\psi}\otimes\ag{\xi}$}}
\ag{V_1\otimes V_2}\otimes\ag{V_4}\otimes\ag{V_3}\\
&\xrightarrow{\makebox[102pt]{$\scriptstyle\Delta_\cC(V_1,V_2)\otimes\id_{\ag{V_4}}\otimes\id_{\ag{V_3}}$}}
\ag{V_1}\otimes\ag{V_2}\otimes\ag{V_4}\otimes\ag{V_3}\\
&\xrightarrow{\makebox[102pt]{$\scriptstyle\id_{\ag{V_1}}\otimes\id_{\ag{V_2}}\otimes\tau_\cC(V_4,V_3)$}}
\ag{V_1}\otimes\ag{V_2}\otimes\ag{V_3}\otimes\ag{V_4}
\end{align*}
for $\varphi\colon U_2\to V_1\otimes V_2$,
$\psi\colon U_1\otimes U_3\to V_4$ and
$\xi\colon\unit_\cC\to V_3$.
\end{example}

Recall that we can decompose the space of morphisms
\begin{gather*}
\ag{U_1}\otimes\dots\otimes\ag{U_m}\to\ag{V_1}\otimes\dots\otimes\ag{V_n}
\end{gather*}
by partitions $P(m,n)$ as in Corollary~\ref{cor:partition_hom}.
It is easy to show that if we take the morphism
represented by the diagram above,
this morphism is decomposed as
\begin{gather*}
\sum_{q\leq p}\ag{\Theta_q}
\end{gather*}
using suitable $\Theta_q\in H_q(U_1,\dots,U_3;V_1,\dots,V_4)$
for each $q\leq p$ where $p\in P(3,4)$ is a partition
$\{\{2,1',2'\},\{1,3,4'\},\{3'\}\}$.
Moreover, the top component $\Theta_p$ is equal to $\varphi\otimes\psi\otimes\xi$.

To apply this argument globally,
we have to fix a ``shape''
of each partition.
For example,
\begin{align*}
\{\{1,3,1'\},\{2,2'\}\}&\mapsto
\begin{xy}
(0,7)*={\bullet}="1"+(0,3)*{1},
(9,7)*={\bullet}="2"+(0,3)*{2},
(18,7)*={\bullet}="3"+(0,3)*{3},
(0,-7)*={\bullet}="1'"+(0,-3)*{1'},
(9,-7)*={\bullet}="2'"+(0,-3)*{2'},
(2,0)="p",
(9,3)*{\hole}="q",
"1";"p" **\dir{-},
"3";"p" **\dir{-},
"1'";"p" **\dir{-},
"2";"q" **\dir{-},
"2'";"q" **\dir{-},
\end{xy},&
\{\{1,2'\},\{2,3\},\{1'\}\}&\mapsto
\begin{xy}
(0,7)*={\bullet}="1"+(0,3)*{1},
(9,7)*={\bullet}="2"+(0,3)*{2},
(18,7)*={\bullet}="3"+(0,3)*{3},
(0,-7)*={\bullet}="1'"+(0,-3)*{1'},
(9,-7)*={\bullet}="2'"+(0,-3)*{2'},
"1";"2'" **\dir{-},
"2";"3" **\crv{(13.5,2)},
\end{xy},\dotsc.
\end{align*}
Let us describe it more precisely. For each partition $p\in P(m,n)$,
first we fix an order of the components
$p=\{I_1,\dots,I_l\}$.
For each $k=1,\dots,l$, write
\begin{align*}
I_k=\{i_{k,1},i_{k,2},\dots,i_{k,a(k)},j'_{k,1},j'_{k,2},\dots,j'_{k,b(k)}\}
\end{align*}
so that $i_{k,1}<i_{k,2}<\dots<i_{k,a(k)}$ and $j_{k,1}<j_{k,2}<\dots<j_{k,b(k)}$.
Next we choose braid group elements $g\in\fB_m$ and $h\in\fB_n$ which satisfy
\begin{align*}
(g^{-1}(1),\dots,g^{-1}(m))&=(i_{1,1},i_{1,2},\dots,i_{1,a(1)},\;\dotsc\;,i_{l,1},i_{l,2},\dots,i_{l,a(l)}),\\
(h^{-1}(1),\dots,h^{-1}(n))&=(j_{1,1},j_{1,2},\dots,j_{1,b(1)},\;\dotsc\;,j_{l,1},j_{l,2},\dots,j_{l,b(l)}).
\end{align*}
These are what we called the \term{shape of $p$}. Using these data,
we define a ``diagram labeling'' map
\begin{gather*}
f_p\colon H_p(U_1,\dots,U_m;V_1,\dots,V_n)\longrightarrow
\Hom_{\cS_t(\cC)}(\ag{U_1}\otimes\dots\otimes\ag{U_m},\ag{V_1}\otimes\dots\otimes\ag{V_n})
\end{gather*}
for each $p\in P(m,n)$ as follows.
Put
\begin{align*}
\tilde U_k&\coloneqq U_{i_{k,1}}\otimes U_{i_{k,2}}\otimes\dots\otimes U_{i_{k,a(k)}},&
\tilde V_k&\coloneqq V_{j_{k,1}}\otimes V_{j_{k,2}}\otimes\dots\otimes V_{j_{k,b(k)}}.
\end{align*}
For $\varphi_k\colon\tilde U_k\to\tilde V_k$
($k=1,\dots,l$),
the corresponding morphism
$f_p(\varphi_1\otimes\dots\otimes\varphi_l)$
is defined to be
\begin{gather*}
f_p(\varphi_1\otimes\dots\otimes\varphi_l)\coloneqq
(\tau_\cC^h)^{-1}\circ\Delta_\cC^p\circ(\ag{\varphi_1}\otimes\dots\otimes\ag{\varphi_l})
\circ\mu_\cC^p\circ\tau_\cC^g
\end{gather*}
where $\tau_\cC^g$ and $\tau_\cC^h$ are braidings along $g$ and $h$ respectively
and
\begin{align*}
\mu_\cC^p&\colon
\ag{U_{g^{-1}(1)}}\otimes\dots\otimes\ag{U_{g^{-1}(m)}}\to
\ag{\tilde U_1}\otimes\dots\otimes\ag{\tilde U_l}\\
\Delta_\cC^p&\colon
\ag{\tilde V_1}\otimes\dots\otimes\ag{\tilde V_l}\to
\ag{V_{h^{-1}(1)}}\otimes\dots\otimes\ag{V_{h^{-1}(n)}}
\end{align*}
are suitable composites of $\mu_\cC,\iota_\cC$ and $\Delta_\cC,\epsilon_\cC$ respectively
(this notion is well-defined since $\mu_\cC$ is associative and $\Delta_\cC$ is coassociative;
see Proposition~\ref{prop:diagram_transform}).
So the morphism in Example~\ref{ex:diagram}
is written as $f_p(\varphi\otimes\psi\otimes\xi)$
if we choose a suitable shape of $p$.
It is easy to check that this map also satisfies unitriangularity
\begin{gather*}
f_p(\Phi)=\ag{\Phi}+\sum_{q\lneq p}\ag{\Theta_q}.\qquad\text{($\Theta_q\in H_q(U_1,\dots,U_m;V_1,\dots,V_n)$)}
\end{gather*}
Thus by the induction on the partial order of the partitions,
we have another isomorphism
\begin{gather*}
\Hom_{\cS_t(\cC)}(\ag{U_1}\otimes\dots\otimes\ag{U_m},\ag{V_1}\otimes\dots\otimes\ag{V_n})
\simeq\!\bigoplus_{p\in P(m,n)}\!f_p(H_p(U_1,\dots,U_m;V_1,\dots,V_n)).
\end{gather*}
Notice that this isomorphism depends on the shapes of the partitions we have chosen.

We say that a diagram is \term{of standard form}
if it represents a composite
\begin{gather*}
(\tau_\cC^h)^{-1}\circ\Delta_\cC^p\circ(\ag{\varphi_1}\otimes\dots\otimes\ag{\varphi_l})
\circ\mu_\cC^p\circ\tau_\cC^g
\end{gather*}
for some $p\in P(m,n)$.
Of course this notion also depends on the shapes we have chosen.
Bring these arguments all together, we have the next proposition.

\begin{proposition}
\label{prop:standardform}
Every morphism
$\ag{U_1}\otimes\dots\otimes\ag{U_m}\to\ag{V_1}\otimes\dots\otimes\ag{V_n}$
can be represented by a linear combination of diagrams of standard form.
In such a representation, the corresponding component of
$H_p(U_1,\dots,U_m;V_1,\dots,V_n)$ at each $p\in P(m,n)$ is uniquely determined.
\end{proposition}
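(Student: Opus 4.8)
The plan is to read the statement off the isomorphism
\begin{gather*}
\Hom_{\cS_t(\cC)}(\ag{U_1}\otimes\dots\otimes\ag{U_m},\ag{V_1}\otimes\dots\otimes\ag{V_n})
\simeq\bigoplus_{p\in P(m,n)}f_p(H_p(U_1,\dots,U_m;V_1,\dots,V_n))
\end{gather*}
established just above. By definition a diagram of standard form represents an element $f_p(\varphi_1\otimes\dots\otimes\varphi_l)$ for some $p$, and a general element of $f_p(H_p)$ is a $\kk$-linear combination of such; so the assertion that every morphism is a linear combination of standard-form diagrams is exactly the surjectivity of the total labeling map $\bigoplus_p f_p\colon\bigoplus_{p\in P(m,n)}H_p\to\Hom_{\cS_t(\cC)}(\dots)$, while the assertion that the $H_p$-component is uniquely determined is exactly the directness of the sum, i.e.\ the injectivity of this map. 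Both are packaged in the displayed isomorphism.

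To make the deduction self-contained I would argue the isomorphism directly. First I would assemble the maps $f_p$ into the single $\kk$-linear map above; by Corollary~\ref{cor:partition_hom} its target is $\bigoplus_p\ag{H_p}$, where the morphisms $\ag{\Phi}$ form a basis indexed by the \emph{same} poset $P(m,n)$. The essential input is the unitriangularity relation
\begin{gather*}
f_p(\Phi)=\ag{\Phi}+\sum_{q\lneq p}\ag{\Theta_q},
\end{gather*}
which says that, after choosing any linear extension of the partial order on $P(m,n)$, the total labeling map has identity diagonal blocks and is triangular with respect to this order. Such a block-unitriangular endomorphism over $\kk$ is invertible (its diagonal blocks are identities), so the total labeling map is an isomorphism; its surjectivity gives existence and its injectivity gives uniqueness.

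It remains to justify the unitriangularity, the one nonformal ingredient. Writing $f_p(\varphi_1\otimes\dots\otimes\varphi_l)=(\tau_\cC^h)^{-1}\circ\Delta_\cC^p\circ(\ag{\varphi_1}\otimes\dots\otimes\ag{\varphi_l})\circ\mu_\cC^p\circ\tau_\cC^g$, I note that the braidings $\tau_\cC^g,\tau_\cC^h$ are isomorphisms that only relabel strings and preserve the underlying recollement, so the partition-theoretic content sits in $\Delta_\cC^p\circ(\bigotimes_k\ag{\varphi_k})\circ\mu_\cC^p$. Each $\mu_\cC(U,V)$ is the retraction of $\ag{U}\otimes\ag{V}\simeq\ag{U\otimes V}\oplus\ag{U,V}$ onto its top summand and $\Delta_\cC(U,V)$ the dual section, with $\mu_\cC\circ\Delta_\cC=\id$. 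Expanding the composite by the composition law of $\cS_t(\cC)$ (Lemma~\ref{lem:complaw}), the unique recollement that forms exactly the blocks of $p$ and identifies nothing further contributes $\ag{\Phi}$ with coefficient $1$, independently of $t$, while every other term in the sum identifies at least one additional pair of points and so lands in a strictly coarser partition $q\lneq p$.

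The main obstacle is the bookkeeping in this last step: one must confirm that the section/retraction identities for $\mu_\cC,\Delta_\cC$ pin the coefficient of the leading term to $1$ (and not to some polynomial in $t$ coming from the factors $P_u(t)$), and that every genuinely new identification produced by the composition law moves us strictly downward in the refinement order, with nothing escaping sideways or upward. Once these two facts are checked, the formal matrix argument above closes the proof.
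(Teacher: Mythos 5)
Your proposal is correct and follows essentially the same route as the paper: the paper likewise derives the proposition from the unitriangularity relation $f_p(\Phi)=\ag{\Phi}+\sum_{q\lneq p}\ag{\Theta_q}$ together with the basis decomposition of Corollary~\ref{cor:partition_hom}, concluding the isomorphism $\Hom_{\cS_t(\cC)}(\cdots)\simeq\bigoplus_p f_p(H_p)$ by induction on the partial order of partitions (your triangular-matrix inversion is the same argument in different words). The only difference is cosmetic: the paper dismisses the unitriangularity itself as ``easy to check,'' whereas you sketch its verification via the composition law, the section/retraction identities for $\mu_\cC,\Delta_\cC$, and the fact that extra identifications only coarsen the partition --- a sound filling-in of the step the paper omits.
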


\begin{remark}
Several known algebras are appeared as the endomorphism ring
of an object of the form $\ag{U}^{\otimes m}\in\cS_t(\cC)$.
For Deligne's case $\cC=\Rep(\kk)$,
$\End_{\cS_t(\cC)}(\ag{\unit_\kk}^{\otimes m})$
is the \term{partition algebra} introduced by
Jones~\cite{Jones:1994} and Martin~\cite{Martin:1994}.
More generally, fix $r\in\NN$ and
let $\cC\coloneqq\Rep(\kk)^{\ZZ/r\ZZ}$ the category of $(\ZZ/r\ZZ)$-graded $\kk$-modules
(Deligne's case is when $r=1$).
Let $U=\unit_\kk[-1]$ which has a component $\unit_\kk$ at degree $1$, so
\begin{align*}
\Hom_\cC(U^{\otimes m},U^{\otimes n})\simeq
\begin{dcases*}
\kk, &if $m\equiv n\pmod{r}$,\\
0, &otherwise.
\end{dcases*}
\end{align*}
The endomorphism ring
$\End_{\cS_t(\cC)}(\ag{U}^{\otimes m})$
is called the \term{$r$-modular party algebra}~\cite{Kosuda:2009}.
It is spanned by diagrams whose number of input legs
and that of output legs are congruent modulo $r$
at each its connected component.

Another example is Knop's case, $\cC=\Rep(\kk[G])$
for a finite group $G$.
The endomorphism ring
$\End_{\cS_t(\cC)}(\ag{\kk[G]}^{\otimes m})$
is the \term{{$G$}-colored partition algebra} of Bloss~\cite{Bloss:2003}.
To represent morphisms he uses little different diagrams from ours but
we can easily translate them into our form using
the following morphisms:
the right multiplication $\kk[G]\to\kk[G]$ by $g\in G$,
the diagonal embedding $\kk[G]\to\kk[G]\otimes\kk[G]$
and projection $\kk[G]\otimes\kk[G]\to\kk[G]$.
Note that in either case objects of the form $\ag{U}^{\otimes m}$
generate the whole pseudo-abelian category $\cS_t(\cC)$.
\end{remark}

\subsection{Universality of \texorpdfstring{$\cS_t(\cC)$}{St(C)}}
The last proposition tells us
that $\cS_t(\cC)$ is generated by
the morphisms $\ag{\varphi}$, $\mu_\cC(U,V)$, $\iota_\cC$, $\Delta_\cC(U,V)$ and $\epsilon_\cC$
as pseudo-abelian $\kk$-braided tensor category.
Next we study the relations between them.
Note that functoriality of the braiding implies that 
any diagram can pass under and jump over a string
(including the Reidemeister move of type~III):
\begin{align*}
\begin{xy}
(0,2)*+{\phantom{\bigoplus}}*\frm{.}="p",
(-6,0);(6,-9) **\dir{-} ?!{(-3,-9);"p"} *{\hole}="x" ?!{(3,-9);"p"} *{\hole}="y",
(-3,9);"p" **\dir{-},
(3,9);"p" **\dir{-},
(-3,-9);"x" **\dir{-}, "p";"x" **\dir{-},
(3,-9);"y" **\dir{-}, "p";"y" **\dir{-},
\end{xy}
\;&=\;
\begin{xy}
(0,-2)*+{\phantom{\bigoplus}}*\frm{.}="p",
(-6,9);(6,0) **\dir{-} ?!{(-3,9);"p"} *{\hole}="x" ?!{(3,9);"p"} *{\hole}="y",
(-3,-9);"p" **\dir{-},
(3,-9);"p" **\dir{-},
(-3,9);"x" **\dir{-}, "p";"x" **\dir{-},
(3,9);"y" **\dir{-}, "p";"y" **\dir{-},
\end{xy}
\;,&
\begin{xy}
(0,2)*+{\phantom{\bigoplus}}*\frm{.}="p",
(-3,9);"p" **\dir{-},
(3,9);"p" **\dir{-},
(-3,-9);"p" **\dir{-} ?!{(6,0);(-6,-9)} *{\hole}="x",
(3,-9);"p" **\dir{-} ?!{(6,0);(-6,-9)} *{\hole}="y",
(6,0);"y" **\dir{-}, "x";"y" **\dir{-}, "x";(-6,-9) **\dir{-},
\end{xy}
\;&=\;
\begin{xy}
(0,-2)*+{\phantom{\bigoplus}}*\frm{.}="p",
(-3,9);"p" **\dir{-} ?!{(6,9);(-6,0)} *{\hole}="x",
(3,9);"p" **\dir{-} ?!{(6,9);(-6,0)} *{\hole}="y",
(-3,-9);"p" **\dir{-},
(3,-9);"p" **\dir{-},
(6,9);"y" **\dir{-}, "x";"y" **\dir{-}, "x";(-6,0) **\dir{-},
\end{xy}
\;,
\end{align*}
and of course we can also apply the Reidemeister move of type~II:
\begin{gather*}
\begin{xy}
(-3,8)="u1",(-3,-8)="v1",(3,8)="u2",(3,-8)="v2",(-8,0)="p1",(8,0)="p2",
(0,5.25)*{\hole}="x",(0,-5.25)*{\hole}="y",
"u1";"v1" **\crv{"p2"},
"u2"."x"."y".(-3,0);"v2" **\crv{"p1"},
"u2"."x";"v2"."y" **\crv{"p1"},
"u2";"v2"."x"."y".(-3,0) **\crv{"p1"},
\end{xy}
\quad=\quad
\begin{xy}
(-2,-8);(-2,8) **\dir{-},
(2,-8);(2,8) **\dir{-},
\end{xy}
\quad=\quad
\begin{xy}
(-3,8)="u1",(-3,-8)="v1",(3,8)="u2",(3,-8)="v2",(-8,0)="p1",(8,0)="p2",
(0,5.25)*{\hole}="x",(0,-5.25)*{\hole}="y",
"u2";"v2" **\crv{"p1"},
"u1"."x"."y".(3,0);"v1" **\crv{"p2"},
"u1"."x";"v1"."y" **\crv{"p2"},
"u1";"v1"."x"."y".(3,0) **\crv{"p2"},
\end{xy}\;.
\end{gather*}

In addition, we can transform diagrams along the local moves listed
in the next proposition.
The proof is easy and straightforward.
We prove later that these equations are enough
to define $\cS_t(\cC)$
by generators and relations.

\begin{proposition}
\label{prop:diagram_transform}
In $\cS_t(\cC)$, the morphisms
$\ag{\varphi}$, $\mu_\cC(U,V)$, $\iota_\cC$, $\Delta_\cC(U,V)$ and $\epsilon_\cC$
satisfy the equations below.
\begin{enumerate}
\item $\ag{\bullet}\colon\cC\to\cS_t(\cC)$ is a $\kk$-linear functor:
\begin{align*}
\begin{xy}
(0,8)="u",
(0,-8)="u'",
(0,0)*+{\id}="p"*\frm{-},
"u";"p" **\dir{-},
"u'";"p" **\dir{-},
\end{xy}\;&=\;
\begin{xy}
(0,8)="u",
(0,-8)="u'",
"u";"u'" **\dir{-},
\end{xy}\;,&\;
\begin{xy}
(0,8)="u",
(0,-8)="u'",
(0,3.5)*+{\varphi}="p"*\frm{-},
(0,-3.5)*+{\psi}="q"*\frm{-},
"u";"p" **\dir{-},
"p";"q" **\dir{-},
"u'";"q" **\dir{-},
\end{xy}\;&=\;
\begin{xy}
(0,8)="u",
(0,-8)="u'",
(0,0)*+{\psi\circ\varphi}="p"*\frm{-},
"u";"p" **\dir{-},
"u'";"p" **\dir{-},
\end{xy}\;,&\;
\begin{xy}
(0,8)="u",
(0,-8)="u'",
(0,0)*+{a\varphi+b\psi}="p"*\frm{-},
"u";"p" **\dir{-},
"u'";"p" **\dir{-},
\end{xy}\;&=\;
a\;\begin{xy}
(0,8)="u",
(0,-8)="u'",
(0,0)*+{\varphi}="p"*\frm{-},
"u";"p" **\dir{-},
"u'";"p" **\dir{-},
\end{xy}\;+\;
b\;\begin{xy}
(0,8)="u",
(0,-8)="u'",
(0,0)*+{\psi}="p"*\frm{-},
"u";"p" **\dir{-},
"u'";"p" **\dir{-},
\end{xy}\;.
\end{align*}
\item $\mu_\cC\colon\ag{\bullet}\otimes\ag{\bullet}\to\ag{\bullet\otimes\bullet}$
and $\Delta_\cC\colon\ag{\bullet\otimes\bullet}\to\ag{\bullet}\otimes\ag{\bullet}$
are both $\kk$-linear transformations:
\begin{align*}
\begin{xy}
(-5,7)="u",
(5,7)="v",
(-5,2)*+{\varphi}="p"*\frm{-},
(5,2)*+{\psi}="q"*\frm{-},
(0,-7)="uv",
(0,-4)="z",
"p";"u" **\dir{-},
"q";"v" **\dir{-},
"p";"z" **\crv{(-5,-3)},
"q";"z" **\crv{(5,-3)},
"uv";"z" **\dir{-},
\end{xy}&\;=\;
\begin{xy}
(-5,7)="u",
(5,7)="v",
(0,-2)*+{\varphi\otimes\psi}="pq"*\frm{-},
(0,-7)="uv",
(0,3)="z",
"u";"z" **\crv{(-5,4)},
"v";"z" **\crv{(5,4)},
"uv";"pq" **\dir{-},
"pq";"z" **\dir{-},
\end{xy}\;,&
\begin{xy}
(-5,-7)="u",
(5,-7)="v",
(-5,-2)*+{\varphi}="p"*\frm{-},
(5,-2)*+{\psi}="q"*\frm{-},
(0,7)="uv",
(0,4)="z",
"p";"u" **\dir{-},
"q";"v" **\dir{-},
"p";"z" **\crv{(-5,3)},
"q";"z" **\crv{(5,3)},
"uv";"z" **\dir{-},
\end{xy}&\;=\;
\begin{xy}
(-5,-7)="u",
(5,-7)="v",
(0,2)*+{\varphi\otimes\psi}="pq"*\frm{-},
(0,7)="uv",
(0,-3)="z",
"u";"z" **\crv{(-5,-4)},
"v";"z" **\crv{(5,-4)},
"uv";"pq" **\dir{-},
"pq";"z" **\dir{-},
\end{xy}\;.
\end{align*}
\item Associativity and coassociativity:
\begin{align*}
\begin{xy}
(0,6);(0,-6) **\dir{-},
(-5,6);(0,1) **\crv{(-5,2)},
(5,6);(0,-2) **\crv{(5,-1)},
\end{xy}&\;=\;
\begin{xy}
(0,6);(0,-6) **\dir{-},
(-5,6);(0,-2) **\crv{(-5,-1)},
(5,6);(0,1) **\crv{(5,2)},
\end{xy}\;,&
\begin{xy}
(0,-6);(0,6) **\dir{-},
(-5,-6);(0,-1) **\crv{(-5,-2)},
(5,-6);(0,2) **\crv{(5,1)},
\end{xy}&\;=\;
\begin{xy}
(0,-6);(0,6) **\dir{-},
(-5,-6);(0,2) **\crv{(-5,1)},
(5,-6);(0,-1) **\crv{(5,-2)},
\end{xy}\;.
\end{align*}
\item Unitality and counitality:
\begin{align*}
\begin{xy}
(0,6);(0,-6) **\dir{-},
(0,-2);(-5,3) **\crv{(-5,-1)} ?>*\dir{x},
\end{xy}\;\;=\;\;
\begin{xy}
(0,6);(0,-6) **\dir{-},
\end{xy}&\;\;=\;\;
\begin{xy}
(0,6);(0,-6) **\dir{-},
(0,-2);(5,4) **\crv{(5,-1)} ?>*\dir{x},
\end{xy}\;,&
\begin{xy}
(0,6);(0,-6) **\dir{-},
(0,2);(-5,-4) **\crv{(-5,1)} ?>*\dir{x},
\end{xy}\;\;=\;\;
\begin{xy}
(0,6);(0,-6) **\dir{-},
\end{xy}&\;\;=\;\;
\begin{xy}
(0,6);(0,-6) **\dir{-},
(0,2);(5,-4) **\crv{(5,1)} ?>*\dir{x},
\end{xy}\;.
\end{align*}
\item $\mu_\cC$ and $\Delta_\cC$ commute with braidings:
\begin{align*}
\begin{xy}
(-5,6)="u",
(5,6)="v",
(0,-6)="uv",
(0,-3)="z",
(0,2)="x",
"x";"u" **\dir{-},
"x"*{\hole};"v" **\dir{-},
"x"*{\hole};"z" **\crv{(-5,-2)},
"x";"z" **\crv{(5,-2)},
"uv";"z" **\dir{-},
\end{xy}&\;=\;
\begin{xy}
(-5,6)="u",
(5,6)="v",
(0,-2)*+{\sigma_\cC}="pq"*\frm{-},
(0,-6)="uv",
(0,2)="z",
"u";"z" **\crv{(-5,3)},
"v";"z" **\crv{(5,3)},
"uv";"pq" **\dir{-},
"pq";"z" **\dir{-},
\end{xy}\;,&
\begin{xy}
(-5,-6)="u",
(5,-6)="v",
(0,6)="uv",
(0,3)="z",
(0,-2)="x",
"x"*{\hole};"u" **\dir{-},
"x";"v" **\dir{-},
"x";"z" **\crv{(-5,2)},
"x"*{\hole};"z" **\crv{(5,2)},
"uv";"z" **\dir{-},
\end{xy}&\;=\;
\begin{xy}
(-5,-6)="u",
(5,-6)="v",
(0,2)*+{\sigma_\cC}="pq"*\frm{-},
(0,6)="uv",
(0,-2)="z",
"u";"z" **\crv{(-5,-3)},
"v";"z" **\crv{(5,-3)},
"uv";"pq" **\dir{-},
"pq";"z" **\dir{-},
\end{xy}\;.
\end{align*}
\item Compatibility between $\mu_\cC$ and $\Delta_\cC$:
\begin{align*}
\begin{xy}
(-6,7);(-2,-3) **\crv{(-6,-1)},
(2,7);(2,3) **\dir{-},
(2,3);(-2,-3) **\dir{-},
(-2,-7);(-2,-3) **\dir{-},
(6,-7);(2,3) **\crv{(6,1)},
\end{xy}\;&=\;
\begin{xy}
(-6,7);(0,2) **\crv{(-6,3)},
(2,7);(0,2) **\crv{(2,4)},
(0,2);(0,-2) **\dir{-},
(6,-7);(0,-2) **\crv{(6,-3)},
(-2,-7);(0,-2) **\crv{(-2,-4)},
\end{xy}\;,&\;
\begin{xy}
(6,7);(2,-3) **\crv{(6,-1)},
(-2,7);(-2,3) **\dir{-},
(-2,3);(2,-3) **\dir{-},
(2,-7);(2,-3) **\dir{-},
(-6,-7);(-2,3) **\crv{(-6,1)},
\end{xy}\;&=\;
\begin{xy}
(6,7);(0,2) **\crv{(6,3)},
(-2,7);(0,2) **\crv{(-2,4)},
(0,2);(0,-2) **\dir{-},
(-6,-7);(0,-2) **\crv{(-6,-3)},
(2,-7);(0,-2) **\crv{(2,-4)},
\end{xy}\;.
\end{align*}
\item $\mu_\cC$ is a retraction and $\Delta_\cC$ is a section:
\begin{gather*}
\begin{xy}
(0,6);(0,3) **\dir{-},
(0,3);(0,-3) **\crv{(5,0)} *\crv{(-5,0)},
(0,-6);(0,-3) **\dir{-},
\end{xy}\;=\;
\begin{xy}
(0,6);(0,-6) **\dir{-},
\end{xy}\;.
\end{gather*}
\item Quadratic relation on braidings:
\begin{align*}
\begin{xy}
0*{\hole}="p",
(-5,7);(5,-7) **\dir{-}, (5,7);"p" **\dir{-}, (-5,-7);"p" **\dir{-},
\end{xy}
\;-\;
\begin{xy}
0*{\hole}="p",
(5,7);(-5,-7) **\dir{-}, (-5,7);"p" **\dir{-}, (5,-7);"p" **\dir{-},
\end{xy}
\;=\;
\begin{xy}
0*+{\sigma_\cC-\sigma_\cC^{-1}}*\frm{-}="p",
"p"+(0,4)="p+",
"p"+(0,-4)="p-",
"p";"p+" **\dir{-},
"p";"p-" **\dir{-},
(5,7);"p+" **\dir{-}, (-5,7);"p+" **\dir{-}, (-5,-7);"p-" **\dir{-}, (5,-7);"p-" **\dir{-},
\end{xy}
\;.
\end{align*}
\item The object $\ag{\unit_\cC}$ is of dimension $t$:
\begin{align*}
\begin{xy}
(0,-2);(0,2) **\dir{-} ?<*\dir{x} ?>*\dir{x},
\end{xy}\;=t\id_{\unit_{\cS_t(\cC)}}.
\end{align*}
\end{enumerate}
\end{proposition}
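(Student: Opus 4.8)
The plan is to treat all nine families of relations by a single mechanism: each of the five generators is, or is assembled from, a morphism of the form $\ag{\Phi}$ together with the canonical section and retraction of the direct-sum decomposition in Corollary~\ref{cor:partition_hom}, and each relation is then checked by expanding both sides in that decomposition and applying the composition law of Lemma~\ref{lem:complaw} together with the tensor-product law. Concretely, $\ag{\varphi}$ is $\ag{\Phi}$ for the fully glued recollement $\{(1,1')\}$ with $\Phi=\varphi$; the maps $\iota_\cC$ and $\epsilon_\cC$ are $\ag{\id_{\unit_\cC}}$ for the recollements $\{(\varnothing,1')\}$ and $\{(1,\varnothing)\}$; and $\mu_\cC(U,V),\Delta_\cC(U,V)$ are by definition the retraction onto, and section of, the summand $\ag{U\otimes V}$ inside $\ag{U}\otimes\ag{V}\simeq\ag{U\otimes V}\oplus\ag{U,V}$. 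With these identifications the proofs become bookkeeping over recollements.

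First I would dispose of the relations in which the interpolation parameter never appears, namely (1)--(7). In each of the relevant composites and tensor products the recollement $u$ that arises has no orphan index, so the polynomial factor is $P_u(T)=1$, and the asserted identity collapses onto the corresponding coherence statement in $\cC$. Thus the functoriality in (1) is the equality $\ag{\psi}\circ\ag{\varphi}=\ag{\psi\circ\varphi}$ coming from the single fully glued $u$; the naturality in (2), the associativity and coassociativity in (3), and the (co)unitality in (4) reduce respectively to naturality of gluing, to Mac~Lane coherence together with associativity of $\otimes$ in $\cC$, and to the unit constraints of $\cC$; relation (5) is immediate from the definition of $\sigma_{\cS_t(\cC)}$ as the recollement-indexed direct sum of the $\ag{\sigma_\cC(U_i,V_j)}$, using functoriality of $\sigma_\cC$ and the braiding axioms; relation (6) follows the same way from associativity in $\cC$; and relation (7) holds by construction, since $\mu_\cC$ and $\Delta_\cC$ were defined as a retraction--section pair for a split direct summand, whence $\mu_\cC\circ\Delta_\cC=\id_{\ag{U\otimes V}}$.

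The two relations carrying genuine content are (8) and (9). For (9) I would simply compute $\epsilon_\cC\circ\iota_\cC$ by the composition law: the only contributing recollement $u$ on the intermediate singleton $\{j\}$ is the orphan $(\varnothing,j,\varnothing)$, for which $\pi_{1,3}(u)$ is empty and $\#u=1$, so $P_u(T)=(T-0)=T$; hence $\epsilon_\cC\circ\iota_\cC=t\,\id_{\unit_{\cS_t(\cC)}}$, exactly the stated loop value. For (8) I would expand $\tau_\cC(U,V)=\sigma_{\cS_t(\cC)}(\ag{U},\ag{V})$ along the two recollements of the pair of singletons. On the separated summand $\ag{U,V}\to\ag{V,U}$ the braiding involves no factor of $\sigma_\cC$ (the tensor over genuine pairs is empty), so $\tau_\cC$ and $\tau_\cC^{-1}$ act by the same relabelling isomorphism there and cancel in the difference; on the glued summand they act by $\ag{\sigma_\cC(U,V)}$ and $\ag{\sigma_\cC^{-1}(U,V)}$ respectively. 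Since $\mu_\cC(U,V)$ and $\Delta_\cC(V,U)$ are precisely the retraction onto and inclusion of the glued summand, the surviving part is $\Delta_\cC(V,U)\circ\bigl(\ag{\sigma_\cC(U,V)}-\ag{\sigma_\cC^{-1}(U,V)}\bigr)\circ\mu_\cC(U,V)$, which is the right-hand side after applying the linearity of $\ag{\bullet}$ from (1).

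The main obstacle I anticipate is relation (8): one must verify honestly that the separated part of $\tau_\cC-\tau_\cC^{-1}$ cancels and identify the surviving glued part with the $\mu$--$\Delta$ sandwich, which requires tracking the definition of $\sigma_{\cS_t(\cC)}$ on each recollement rather than arguing purely pictorially. As a uniform alternative that sidesteps the case analysis --- and that re-proves (9) cleanly --- I would use the interpolation device already employed for associativity of composition: verify each relation inside $\cW_d(\cC)$ for all large $d\in\NN$, where $\cS_d(\cC)\to\cW_d(\cC)$ is an isomorphism on the relevant Hom-spaces once $d$ exceeds the total number of points involved, express both sides as morphisms with coefficients polynomial in an indeterminate $T$, conclude equality of these polynomials, and specialise $T=t$; in $\cW_d(\cC)$ the loop in (9) has value $d$, which interpolates to $t$.
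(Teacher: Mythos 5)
Your proof is correct and is exactly the direct verification the paper intends: the paper dismisses this proposition with ``The proof is easy and straightforward,'' and your recollement bookkeeping via Lemma~\ref{lem:complaw} and the tensor-product lemma --- no orphans in (1)--(7) so all coefficients $P_u(t)=1$, the single orphan $(\varnothing,j,\varnothing)$ giving $P_u(t)=t$ in (9), and the block-diagonal analysis of $\tau_\cC$ over the glued and separated recollements for (8) --- supplies precisely the omitted computation, with all generator identifications ($\ag{\varphi}$, $\iota_\cC$, $\epsilon_\cC$ as $\ag{\Phi}$'s and $\mu_\cC$, $\Delta_\cC$ as the canonical retraction--section pair, making (7) true by construction) matching the paper's definitions. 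Your fallback interpolation argument (verify in $\cW_d(\cC)$ for $d\gg0$, where the loop in (9) evaluates to $d$, then specialize the polynomial $T=t$) is likewise sound and mirrors the device the paper itself uses to prove associativity of composition in $\cS_t(\cC)$ and well-definedness of the restriction functors.
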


Using these equations, we can easily calculate composites of morphisms.
Calculating tensor products is easier: it is nothing but arranging diagrams horizontally.
Note that the rank $t$ appears only when we remove isolated components
from diagrams using the last equation (9).

\begin{example}
\begin{align*}
&\mspace{21mu}
\begin{xy}
(-12,10)*={\bullet}="v1",
(-4,10)*={\bullet}="v2",
(4,10)*={\bullet}="v3",
(12,10)*={\bullet}="v4",
(-8,-10)*={\bullet}="w1",
(8,-10)*={\bullet}="w2",
(-10,0)*+{\xi}*\frm{-}="r",
(0,0)*+{\chi}*\frm{-}="s",
(10,0)*+{\omega}*\frm{-}="t",
"r"+(0,4)="r+",
"s"+(0,-4)="s-",
(-2.5,6)*{\hole}="y",
"r";"r+" **\dir{-},
"r";"r"+(0,-5) **\dir{-} ?>*\dir{x},
"s";"s-" **\dir{-},
"t";"t"+(0,-5) **\dir{-} ?>*\dir{x},
"v1";"r+" **\dir{-},
"v3";"y" **\dir{-}, "y";"r+" **\dir{-},
"v2";"s" **\dir{-},
"v4";"t" **\dir{-},
"w1";"s-" **\dir{-},
"w2";"s-" **\dir{-},
\end{xy}
\quad\circ\quad
\begin{xy}
(0,10)*={\bullet}="u1",
(-12,-10)*={\bullet}="v1",
(-4,-10)*={\bullet}="v2",
(4,-10)*={\bullet}="v3",
(12,-10)*={\bullet}="v4",
(-6,0)*+{\varphi}*\frm{-}="p",
(6,0)*+{\psi}*\frm{-}="q",
"p"+(0,-4)="p-",
"q"+(0,-4)="q-",
(0,-7.5)*{\hole}="x",
"p";"p"+(0,5) **\dir{-} ?>*\dir{x},
"p";"p-" **\dir{-},
"q";"q-" **\dir{-},
"u1";"q" **\dir{-},
"v2";"q-" **\dir{-},
"v4";"q-" **\dir{-},
"v1";"p-" **\dir{-},
"v3";"x" **\dir{-}, "x";"p-" **\dir{-},
\end{xy}
\quad=\quad
\begin{xy}
(0,15)*={\bullet}="u1",
(-8,-15)*={\bullet}="w1",
(8,-15)*={\bullet}="w2",
(-4,9)*+{\varphi}*\frm{-}="p",
(6,6)*+{\psi}*\frm{-}="q",
(-8,-4)*+{\xi}*\frm{-}="r",
(0,-6)*+{\chi}*\frm{-}="s",
(10,-4)*+{\omega}*\frm{-}="t",
(0.3,4.15)*{\hole}="x",
(-3.3,-2.5)*{\hole}="y",
"u1";"q" **\dir{-},
"q";"t" **\dir{-},
"p";"r"."x" **\crv{(8,0)}, "p"."x";"r"."y" **\crv{(8,0)},
"p"*={}."y".(3,0);"r" **\crv{(8,0)},
"p";"r" **\crv{(-14,2)},
"q";"s" **\crv{(-10,2)},
"w1";"s" **\dir{-},
"w2";"s" **\dir{-},
\end{xy}\\
&=t(\xi\circ\varphi)\quad
\begin{xy}
(0,8)*={\bullet}="u1",
(-8,-8)*={\bullet}="w1",
(8,-8)*={\bullet}="w2",
(0,1)*+{(\chi\otimes\omega)\circ\psi}*\frm{-}="c",
"c"+(0,-4)="c-",
"c";"c-" **\dir{-},
"u1";"c" **\dir{-},
"w1";"c-" **\dir{-},
"w2";"c-" **\dir{-},
\end{xy}\;.
\end{align*}
\end{example}

Notice that $\ag{\bullet}\colon\cC\to\cS_t(\cC)$ is
a $\kk$-linear functor between $\kk$-braided tensor categories
but not a $\kk$-braided tensor functor.
In fact, the conditions (1)-(5) is almost same as the definition of braided tensor functor
but the only difference is that
they do not require that $\mu_\cC$ and $\Delta_\cC$, $\iota_\cC$ and $\epsilon_\cC$ are inverse to each other.
With this fact in mind,
we define weaker notions of tensor functors and transformations.
\begin{definition}
Let $\cC$ and $\cD$ be $\kk$-tensor categories.
\begin{enumerate}
\item A $\kk$-linear functor $F\colon\cC\to\cD$ is called a \term{$\kk$-Frobenius functor}
if it is endowed with $\kk$-linear transformations
\begin{align*}
\mu_F&\colon F(\bullet)\otimes F(\bullet)\to F(\bullet\otimes\bullet),&
\Delta_F&\colon F(\bullet\otimes\bullet)\to F(\bullet)\otimes F(\bullet),\\
\iota_F&\colon\unit_\cD\to F(\unit_\cC),&
\epsilon_F&\colon F(\unit_\cC)\to\unit_\cD
\end{align*}
which are associative, unital, coassociative and counital
(see Definition~\ref{def:tenfun}~(1)),
and satisfies the compatibility conditions
in Proposition~\ref{prop:diagram_transform}~(6), i.e.\
\begin{align*}
\Delta_F(U\otimes V,W)\circ\mu_F(U,V\otimes W)&=
(\mu_F(U,V)\otimes\id_W)\circ(\id_U\otimes\Delta_F(V,W)),\\
\Delta_F(U,V\otimes W)\circ\mu_F(U\otimes V,W)&=
(\id_U\otimes\mu_F(V,W))\circ(\Delta_F(U,V)\otimes\id_W).
\end{align*}
The scalar $\epsilon_F\circ\iota_F\in\End_\cC(\unit_\cC)\simeq\kk$
is called the \term{dimension} of $F$ and denoted by $\dim F$.
\item A $\kk$-Frobenius functor $F$ is called
\term{separable} if
$\mu_F$ is a retraction and $\Delta_F$ is a section, i.e.\
$\mu_F(U,V)\circ\Delta_F(U,V)=\id_{U\otimes V}$.
\item A \term{$\kk$-Frobenius transformation} $\eta\colon F\to G$
between two Frobenius functors
is a $\kk$-linear transformation
such that both the diagrams in Definition~\ref{def:tenfun}~(2)
and their dual commute.
\end{enumerate}
\end{definition}
\begin{definition}
Let $\cC$ and $\cD$ be $\kk$-braided tensor categories.
\begin{enumerate}
\item A \term{$\kk$-braided Frobenius functor} $F\colon\cC\to\cD$ is
a $\kk$-Frobenius functor
such that $\mu_F$ and $\Delta_F$ commute with braidings.
See Definition~\ref{def:braid}~(3).
\item A $\kk$-braided Frobenius functor $F$ is called \term{quadratic}
if it satisfies the quadratic relation
\begin{gather*}
\sigma_\cD(F(U),F(V))-\sigma^{-1}_\cD(F(U),F(V))
=\Delta_F(V,U)\circ(\sigma_\cC(U,V)-\sigma^{-1}_\cC(U,V))\circ\mu_F(U,V).
\end{gather*}
\item A \term{$\kk$-braided Frobenius transformation}
is just a $\kk$-Frobenius transformation
between two $\kk$-braided Frobenius functors.
\end{enumerate}
\end{definition}

Thus Proposition~\ref{prop:diagram_transform} just says that
$\ag{\bullet}\colon\cC\to\cS_t(\cC)$ is a
$\kk$-braided Frobenius functor which is
separable, quadratic, and of dimension $t$.
Obviously an usual $\kk$-braided tensor functor
is also but of dimension $1$.
Note that $\kk$-braided Frobenius functors
are closed under composition and
the properties listed above are preserved.
In addition, $\dim(G\circ F)=\dim F\dim G$.

\begin{remark}
Frobenius functors, usually called Frobenius monoidal functors,
were introduced and studied
by Szlach\'anyi~\cite{Szlachanyi:2001,Szlachanyi:2005}, Day and Pastro~\cite{DayPastro:2008}.
Notice that McCurdy and Street~\cite{McCurdyStreet:2010} require
a stronger relation
\begin{gather*}
\sigma_\cD(F(U),F(V))
=\Delta_F(V,U)\circ\sigma_\cC(U,V)\circ\mu_F(U,V).
\end{gather*}
in their definition of the term ``braided'' on separable Frobenius functors than ours.
\end{remark}

Now we state the universal property of $\cS_t(\cC)$.
That is, $\cS_t(\cC)$ is the smallest category
which has generators and satisfies relations
as in Proposition~\ref{prop:diagram_transform}.
Let us denote by $\HOM_\kk^\mathbf{B}(\cC,\cD)$
(resp.\ $\HOM_\kk^\mathbf{BF}(\cC,\cD)$)
the category of $\kk$-braided tensor (resp.\ Frobenius) functors
and transformations.

\begin{theorem}
\label{thm:universality}
Let $\cC,\cD$ be $\kk$-braided tensor categories
and assume that $\cD$ is pseudo-abelian.
\begin{enumerate}
\item The natural functor
\begin{gather*}
\HOM_\kk^\mathbf{B}(\cS_t(\cC),\cD)
\xrightarrow{\circ\ag{\bullet}}
\HOM_\kk^\mathbf{BF}(\cC,\cD)
\end{gather*}
is fully faithful.
\item For $F\in\HOM_\kk^\mathbf{BF}(\cC,\cD)$,
there exists $\tilde F\in\HOM_\kk^\mathbf{B}(\cS_t(\cC),\cD)$
such that $F\simeq\tilde F\circ\ag{\bullet}$ as $\kk$-braided Frobenius functors
if and only if $F$ is separable, quadratic, and of dimension $t$.
\end{enumerate}
\end{theorem}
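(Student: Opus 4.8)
The plan is to deduce both parts from the structural description already obtained, namely that (by Proposition~\ref{prop:standardform} and Corollary~\ref{cor:partition_hom}) $\cS_t(\cC)$ is generated as a pseudo-abelian $\kk$-braided tensor category by the objects $\ag{U}$ ($U\in\cC$) together with the morphisms $\ag{\varphi}$, $\mu_\cC(U,V)$, $\iota_\cC$, $\Delta_\cC(U,V)$, $\epsilon_\cC$ and the braiding, and that (by Proposition~\ref{prop:diagram_transform}) these generators satisfy relations (1)--(9) and the Reidemeister moves. Granting that these relations are \emph{complete}---that every relation among the generators is a consequence of them---the theorem becomes formal; I isolate this completeness as the one substantial point and address it last.

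For part~(1), write $G_i\coloneqq\Psi_i\circ\ag{\bullet}$, so that the functor under study sends a $\kk$-braided tensor transformation $\alpha\colon\Psi_1\to\Psi_2$ to the whiskered transformation $\gamma\colon G_1\to G_2$ with $\gamma(U)=\alpha(\ag{U})$. Since $\alpha$ is $\otimes$-multiplicative and extends uniquely along the pseudo-abelian envelope, it is determined by the components $\alpha(\ag{U})=\gamma(U)$; hence $\alpha\mapsto\gamma$ is injective, which is faithfulness. For fullness I would, given a $\kk$-braided Frobenius transformation $\gamma\colon G_1\to G_2$, put $\alpha(\ag{U_1}\otimes\dots\otimes\ag{U_m})\coloneqq\gamma(U_1)\otimes\dots\otimes\gamma(U_m)$ and extend to all objects by the universal property of $\Ps$. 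Naturality of $\alpha$ against $\ag{\varphi}$ is the ordinary naturality of $\gamma$, naturality against $\mu_\cC,\Delta_\cC,\iota_\cC,\epsilon_\cC$ is exactly the collection of squares defining a Frobenius transformation (Definition~\ref{def:tenfun}~(2) and its dual), and naturality against the braiding is automatic from naturality of $\sigma_\cD$; closure of naturality under composition, tensor and linear combination then makes $\alpha$ a $\kk$-braided tensor transformation with $\alpha\circ\ag{\bullet}=\gamma$. Only generation, not completeness, is used here.

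For part~(2), the ``only if'' direction is immediate: an ordinary $\kk$-braided tensor functor is a separable, quadratic, dimension-$1$ braided Frobenius functor, and since these properties are preserved under composition with $\dim(\tilde F\circ\ag{\bullet})=\dim\ag{\bullet}\cdot\dim\tilde F=t$, any $F\simeq\tilde F\circ\ag{\bullet}$ is separable, quadratic and of dimension $t$. For the ``if'' direction I would build $\tilde F$ by setting $\tilde F(\ag{U_1}\otimes\dots\otimes\ag{U_m})\coloneqq F(U_1)\otimes\dots\otimes F(U_m)$ on objects and by sending each standard-form diagram to the composite obtained by replacing $\ag{\varphi}$, $\mu_\cC$, $\iota_\cC$, $\Delta_\cC$, $\epsilon_\cC$ and the braiding of $\cS_t(\cC)$ with $F(\varphi)$, $\mu_F$, $\iota_F$, $\Delta_F$, $\epsilon_F$ and the braiding of $\cD$. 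The uniqueness clause of Proposition~\ref{prop:standardform} makes this a well-defined $\kk$-linear map on each Hom-space, and separability $\mu_F\circ\Delta_F=\id$ ensures that the summand $\ag{U\otimes V}$ of $\ag{U}\otimes\ag{V}$ goes to the corresponding retract of $F(U)\otimes F(V)$, so the assignment on objects is compatible with the decompositions of Corollary~\ref{cor:partition_hom}; extension to $\Add$ and $\Kar$ uses only that $\cD$ is pseudo-abelian.

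The heart of the argument, and the step I expect to be hardest, is the completeness assertion, equivalently the well-definedness of $\tilde F$ on composites. I would prove it by reduction to Proposition~\ref{prop:standardform}: that proposition already supplies existence and uniqueness of the standard form, so it remains to check that every diagram can be rewritten into standard form using only the Reidemeister moves and relations (1)--(9). This is a diagrammatic induction---sliding every crossing to the outer boundary, and merging all $\mu_\cC$- and $\Delta_\cC$-vertices into one layer by associativity~(3), unitality~(4) and the Frobenius compatibility~(6)---in which the parameter $t$ enters only through the loop relation~(9). Because $F$ realizes in $\cD$ the exact image of each relation (functoriality gives~(1); the braided Frobenius axioms give~(2)--(6); separability gives~(7); the quadratic property gives~(8); and $\dim F=t$ gives the loop value in~(9)), every rewriting step survives application of $\tilde F$, so $\tilde F$ respects composition. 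This yields a $\kk$-braided tensor functor $\tilde F$ with $\tilde F\circ\ag{\bullet}\simeq F$, and verifying that this rewriting terminates at the unique normal form is the real technical content.
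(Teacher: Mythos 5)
Your part~(1) and the ``only if'' half of part~(2) follow the paper's argument essentially verbatim, and your reduction of the ``if'' half to the completeness of relations (1)--(9) --- i.e.\ to the claim that every diagram can be rewritten into a linear combination of standard forms using only those relations --- is likewise exactly the paper's reduction via Proposition~\ref{prop:standardform}. The problem is that you then stop: you explicitly defer the completeness claim (``verifying that this rewriting terminates at the unique normal form is the real technical content''), but that claim \emph{is} the proof of the theorem. Everything else, as you yourself observe, is formal, so as it stands the proposal assumes precisely what has to be proved.

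Moreover, the one-line sketch you offer --- ``sliding every crossing to the outer boundary'' --- is not a viable strategy and is not what the paper does. A crossing whose two strands belong to the same connected component cannot in general be slid away by Reidemeister moves; the paper instead proves a key lemma that such a crossing may be replaced by the labeled box $\ag{\sigma_\cC}$ (resp.\ $\ag{\sigma_\cC^{-1}}$), by induction on the size of the loop joining the two strands: the quadratic relation~(8) is used to reverse crossings, the correction term it produces having strictly fewer connected components and hence being covered by the inductive hypothesis; strands hanging into the loop are flipped out using~(5); and (2), (3), (6) shrink the loop down to two explicit base cases. Only after every intra-component crossing has been traded for such a box does one obtain a planar tree diagram, which (3), (4), (6) and --- for closed components --- the loop relation~(9) reduce to standard form. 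Diagrams with several connected components then require a second induction on their number, again powered by~(8), to sort the components from the back of the page to the front before each is normalized separately. None of this structure is visible in your sketch; in particular you never use the quadratic relation~(8) as a rewriting device, and without it the genuinely braided (non-symmetric) case cannot be handled at all.
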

\begin{proof}
(1) Let $\tilde F,\tilde G\colon\cS_t(\cC)\to\cD$ be $\kk$-braided tensor functors
and put $F\coloneqq\tilde F\circ\ag{\bullet}$, $G\coloneqq\tilde G\circ\ag{\bullet}$.
We have to show that the map between the sets of transformations
\begin{align*}
\Hom_{\HOM_\kk^\mathbf{B}(\cS_t(\cC),\cD)}(\tilde F,\tilde G)
&\to\Hom_{\HOM_\kk^\mathbf{BF}(\cC,\cD)}(F,G)\\
\tilde\eta&\mapsto\eta
\end{align*}
defined by $\eta(U)=\tilde\eta(\ag{U})$ is bijective.

By the definition of $\kk$-tensor transformation,
the map $\tilde\eta(\ag{U_1}\otimes\dots\otimes\ag{U_m})$
is determined by each $\tilde\eta(\ag{U_i})=\eta(U_i)$.
Thus this map is injective.
Conversely, for each $\kk$-braided Frobenius transformation $\eta\colon F\to G$,
we can define $\tilde\eta\colon\tilde F\to\tilde G$ at each objects in $\cS_t(\cC)$ as above.
We can show easily that $\tilde\eta$ commute with all the morphisms in $\cS_t(\cC)$;
so $\tilde\eta$ is actually a transformation whose restriction is equal to $\eta$.
Thus this map is also surjective.

(2) The ``only if'' part is obvious, so we prove the ``if'' part.
Let us take a $\kk$-braided Frobenius functor $F\colon\cC\to\cD$
which is separable, quadratic and of dimension $t$.
First we define $\tilde F$ for objects $\ag{U_1}\otimes\dots\otimes\ag{U_m}$ by
\begin{gather*}
\tilde F(\ag{U_1}\otimes\dots\otimes\ag{U_m})\coloneqq F(U_1)\otimes\dots\otimes F(U_m).
\end{gather*}
The map for morphisms is determined by
$\tilde F(\mu_\cC)\coloneqq\mu_F$, $\tilde F(\iota_\cC)\coloneqq\iota_F$ etc;
since all morphisms in $\cS_t(\cC)$ are generated by them.
By taking its pseudo-abelian envelope, we can extend its domain to the whole objects in $\cS_t(\cC)$.

To prove its well-definedness, we have to show that
a linear combination of diagrams which represents
a zero morphism in $\cS_t(\cC)$ is also zero in $\cD$.
Here we also use diagrams to denote morphisms in $\cD$ which are came from $\cC$ via $F$.
By Proposition~\ref{prop:standardform}
it suffices to show that
every diagram can be transformed into a linear combination of diagrams of standard form
using the relations listed in Proposition~\ref{prop:diagram_transform} only.
First we state the next lemma.
\begin{lemma}
If two strings in left-hand sides below are connected,
\begin{align*}
\begin{xy}
(0,0)*{\hole}="p",
(-5,7);(5,-7) **\dir{-}, (5,7);"p" **\dir{-}, (-5,-7);"p" **\dir{-},
\end{xy}
\;&=\;
\begin{xy}
0*+{\sigma_\cC}*\frm{-}="p",
"p"+(0,4)="p+",
"p"+(0,-4)="p-",
"p";"p+" **\dir{-},
"p";"p-" **\dir{-},
(5,7);"p+" **\dir{-}, (-5,7);"p+" **\dir{-}, (-5,-7);"p-" **\dir{-}, (5,-7);"p-" **\dir{-},
\end{xy}
\;,&
\begin{xy}
0*{\hole}="p",
(5,7);(-5,-7) **\dir{-}, (-5,7);"p" **\dir{-}, (5,-7);"p" **\dir{-},
\end{xy}
\;&=\;
\begin{xy}
0*+{\sigma_\cC^{-1}}*\frm{-}="p",
"p"+(0,4)="p+",
"p"+(0,-4)="p-",
"p";"p+" **\dir{-},
"p";"p-" **\dir{-},
(5,7);"p+" **\dir{-}, (-5,7);"p+" **\dir{-}, (-5,-7);"p-" **\dir{-}, (5,-7);"p-" **\dir{-},
\end{xy}
\;.
\end{align*}
\end{lemma}
\begin{proof}
It suffices to prove the first equation.
By the assumption we can find a loop connecting the two strings.
The shape of the loop looks like either of the diagrams below
depending on whether the loop contains the other side of the crossing or not:
\begin{align*}
\relax{\begin{xy}
(2,-2);(-2,2) **\dir{-}, (-2,-2);0*{\hole} **\dir{-}, (2,2);0*{\hole} **\dir{-},
(-2,2);(-2,-2) **\crv{~**\dir{--}(-10,10)&(-20,0)&(-10,-10)}
\end{xy}}\;,&&
\relax{\begin{xy}
(2,-2);(-2,2) **\dir{-}, (-2,-2);0*{\hole} **\dir{-}, (2,2);0*{\hole} **\dir{-},
(2,2);(2,-2) **\crv{~**\dir{--}(6,6)&(-6,12)&(-14,0)&(-6,-12)&(6,-6)}
\end{xy}}\;.
\end{align*}
We prove the equation by the induction on sizes of loops.
So we may assume that the loop has no short circuits
and other self-crossings.
To prove the equation we can reverse crossings in the loop freely since
the right-hand side of the relation (8) makes smaller loops.
So we can remove all unconnected strings from the diagram.
In addition, the crossing in the loop of second type above
can be moved to the outside of the loop
since the strings in the other side of the crossing are not connected to the loop:
\begin{gather*}
\relax{\begin{xy}
(2,-2);(-2,2) **\dir{-}, (-2,-2);0*{\hole} **\dir{-}, (2,2);0*{\hole} **\dir{-},
(2,2);(2,-2) **\crv{~**\dir{--}(6,6)&(-6,12)&(-14,0)&(-6,-12)&(6,-6)}
\end{xy}}\;\longrightarrow\;
\relax{\begin{xy}
(2,-1);(-2,1) **\dir{-}, (-2,-1);0*{\hole} **\dir{-}, (2,1);0*{\hole} **\dir{-},
(2,1);(2,-1) **\crv{~**\dir{--}(12,6)&(3,12)&(3,-12)&(12,-6)}
\end{xy}}.
\end{gather*}
Thus we may assume that the loop is of first type.

If there is a string in the loop,
by the assumptions the string is connected to the loop at only one point.
If this string has a crossing with the loop,
by the hypothesis of the induction
we can apply the lemma to this crossing and we get a smaller loop.
Otherwise we can flip it to the outside using (5):
\begin{gather*}
\begin{xy}
(0,8);(0,-8) **\dir{-},
(0,0);(6,-6)*+{\phantom{\bigoplus}}*\frm{.} **\crv{(6,-1)},
\end{xy}\;\longrightarrow\;
\begin{xy}
(0,4)*+{\sigma_\cC}*\frm{-}="p",
(0,8);"p" **\dir{-},
(0,-8);"p" **\dir{-},
(0,0);(-6,-6)*+{\phantom{\bigoplus}}*\frm{.} **\crv{(-6,-1)},
\end{xy}.
\end{gather*}
Thus we may assume that there is no strings in the loop.
We can remove extra parts on the loop by using (2), (3) and (6).
So it suffices to prove the equation in two special cases below:
\begin{align*}
\begin{xy}
(0,-3)="p",
(4,-7);"p" **\dir{-}, (-4,-7);"p"*{\hole} **\dir{-},
"p"*{\hole};(0,4) **\crv{(5,2)},
"p";(0,4) **\crv{(-5,2)},
(0,7);(0,4) **\dir{-},
\end{xy}\;,&&
\begin{xy}
(-1,0)="p",
(-6,5);"p" **\dir{-}, (4,-7);"p" **\crv{(3,-4)},
(-6,-5);"p"*{\hole} **\dir{-}, (4,7);"p"*{\hole} **\crv{(3,4)},
(-6,5);(-6,-5) **\crv{(-11,0)},
(-6,5);(-6,7) **\dir{-},
(-6,-5);(-6,-7) **\dir{-},
\end{xy}\;.
\end{align*}
The proof is easy and we left it to the reader.

\end{proof}

Let us continue the proof of the theorem.
First take an arbitrary connected diagram.
Using this lemma, we can remove all crossings from the diagram
and we get a planar diagram.
If the diagram has extra $\iota_\cC$'s and $\epsilon_\cC$'s
we can put them together to other strings
using the lemma and the relation (4).
By removing all bubbles using (6),
we get a tree diagram which has no extra endpoints.
If the diagram represents a morphism $\unit_{\cS_t(\cC)}\to\unit_{\cS_t(\cC)}$,
we can transform it into a scalar by (9).
Otherwise we can move all $\mu_\cC$'s to the top of the diagram
and $\Delta_\cC$'s to the bottom;
then we obtain a diagram of standard form.

Next we prove this for any diagram which has more than two connected components
by the induction on the number of them.
For such a diagram, first we reverse some crossings using (8)
so that the connected components are
totally ordered from the back of the paper to the front.
Because the number of the connected components of right-hand side of (8) is
less than that of left-hand side,
we can apply the hypothesis of the induction to
the difference between them.
Then we can transform each connected component to standard form in the manner described above.
Reversing some crossings again, we get a diagram of standard form.
\end{proof}

\begin{remark}
Let $\cC$ be a $\kk$-tensor category
and consider the subcategory $\TL_t(\cC)$
of $\cS_t(\cC)$
whose objects are generated by $\ag{U_1}\otimes\dots\otimes\ag{U_m}$
for all $U_1,\dots,U_m\in\cC$
and morphisms between them are $\kk$-linear combinations of ``non-crossing'' diagrams,
i.e.\ composites of $\ag{\varphi}$, $\mu_\cC$, $\iota_\cC$, $\Delta_\cC$ and $\epsilon_\cC$.
This $\kk$-tensor category is a ``$\cC$-colored'' version
of so-called Temperley--Lieb category~\cite{Freedman:2003}
and satisfies the same universality as in Theorem~\ref{thm:universality}
with respect to separable $\kk$-Frobenius functors of dimension $t$.
The important difference between $\cS_t$ and $\TL_t$
is that we can naturally apply $\TL_t$ to any $\kk$-linear bicategories,
in other words, $\kk$-tensor categories with several 0-cells.
\end{remark}

\section{Classification of Indecomposable Objects}
In this section we assume that
$k$ is a field of characteristic zero.
The purpose of this section is to explain
the structure of our category $\cS_t(\cC)$.

\subsection{For Deligne's category}
Let us denote Deligne's category $\cS_t(\Rep(\kk))$ by $\cD_t$.
We review here the result of
Comes and Ostrik~\cite{ComesOstrik:2011}
which describes the complete classification of indecomposable objects
in $\cD_t$.

For $m\in\NN$, we use the same symbol $m$ to denote the family of objects $(\unit_\kk)_{i=1}^m$
which contains the trivial representation $\unit_\kk$ by multiplicity $m$
so that we can write an object in $\cD_t$ as $\ag{m}$.
Let us denote by $E_{t,m}$ the $\kk$-algebra $\End_{\cD_t}(\ag{m})$.
It is the direct sum of $\bigaag{H_r(m;m)}$
for all recollements $r\in R(m,m)$ and each $H_r(m;m)$ is one-dimensional.
\begin{lemma}
Let $m\in\NN$ and put
$A\coloneqq\bigaag{H^m(m;m)}$,
$I\coloneqq\bigaag{H^{>m}(m;m)}$.
Then
\begin{enumerate}
\item $E_{t,m}=A\oplus I$ as a $\kk$-module,
\item $A$ is a $\kk$-subalgebra of $E_{t,m}$ isomorphic to $\kk[\fS_m]$,
\item $I$ is a two-sided ideal of $E_{t,m}$.
\end{enumerate}
Thus $E_{t,m}/I\simeq\kk[\fS_m]$ as a $\kk$-algebra.
\end{lemma}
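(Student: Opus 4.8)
The plan is to read off all three assertions from the explicit basis of $E_{t,m}$ indexed by recollements, together with the composition law defining $\cS_t(\cC)$ and Corollary~\ref{cor:ideal}. First I would record the combinatorics of $R(m,m)$: a recollement $r$ of $\{1,\dots,m\}$ and $\{1',\dots,m'\}$ has each class containing at most one unprimed and at most one primed index, so $r$ is a partial bijection matching some $c$ pairs and leaving $m-c$ singletons on each side. Its length is therefore $\#r=2m-c$, whence $\#r\geq m$ for every $r$, with equality exactly when $c=m$, i.e. when $r$ is a full bijection $r_\sigma$ recording a permutation $\sigma\in\fS_m$. Since $H^e(m;m)=0$ for $e<m$, this already gives assertion (1): $E_{t,m}=\bigoplus_r\aag{H_r(m;m)}$ splits as the length-$m$ part $A=\aag{H^m(m;m)}$, with basis $\{r_\sigma\}_{\sigma\in\fS_m}$, and the higher-length part $I=\aag{H^{>m}(m;m)}$.

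Assertion (3) is then immediate from Corollary~\ref{cor:ideal} applied with $d=m+1$, which states precisely that $\aag{H^{\geq m+1}(m;m)}=\aag{H^{>m}(m;m)}=I$ is a two-sided ideal of $E_{t,m}$.

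The substance is assertion (2). The key preliminary observation is that on the permutation recollements the two bases coincide, $\aag{\Phi}=\ag{\Phi}$: the base-change formula $\aag{\Phi}=\sum_{s\leq r}\ag{\Phi|_s}$ runs over coarsenings $s$ of $r$, but a full bijection $r_\sigma$ admits no proper coarsening inside $R(m,m)$, since merging two classes would force two unprimed or two primed indices into one class. Hence I may compute in the $\ag{\bullet}$-basis. Writing $e_\sigma$ for the generator of $\aag{H_{r_\sigma}(m;m)}=\ag{H_{r_\sigma}(m;m)}$, namely the tensor of identity maps $\unit_\kk\to\unit_\kk$, I would apply Lemma~\ref{lem:complaw} to $e_\sigma\circ e_\tau$. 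Because $r_\tau$ and $s_\sigma$ are full bijections, there is a unique $u\in R(s_\sigma\circ r_\tau)$, the one with three-element classes $\{\tau^{-1}(j),j',\sigma(j)''\}$; it has $\#u=m=\#\pi_{1,3}(u)$, equivalently no orphan $(\varnothing,j,\varnothing)$ occurs, so its polynomial $P_u$ is the empty product $1$, and $\pi_{1,3}(u)=r_{\sigma\tau}$. Composing identity maps gives $e_\sigma\circ e_\tau=e_{\sigma\tau}$. Thus $\sigma\mapsto e_\sigma$ is a unit-preserving algebra homomorphism $\kk[\fS_m]\to A$, and it is bijective since both sides carry the basis indexed by $\fS_m$; this proves (2). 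Finally, since $E_{t,m}=A\oplus I$ with $A$ a subalgebra and $I$ an ideal, the quotient map restricts to an isomorphism $A\eqmap E_{t,m}/I$, yielding $E_{t,m}/I\simeq\kk[\fS_m]$.

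The only real obstacle is the vanishing of the $t$-dependent coefficient in (2): one must check that composing two length-$m$ recollements produces a single length-$m$ term with coefficient $1$ and no lower-order, $t$-dependent corrections. This is exactly the statement that no orphans arise when both recollements are full bijections, which makes $P_u$ the constant $1$; everything else is bookkeeping with the basis indexed by $R(m,m)$.
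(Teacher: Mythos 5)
Your proof is correct and follows essentially the same route as the paper, which disposes of the lemma by declaring (1) and (2) obvious and citing Corollary~\ref{cor:ideal} for (3); your write-up simply supplies the intended details. In particular, your two key verifications --- that $\aag{\Phi}=\ag{\Phi}$ on full-bijection recollements because such recollements admit no proper coarsening, and that the unique $u\in R(s_\sigma\circ r_\tau)$ has no orphans so that $P_u(t)=1$ and $e_\sigma\circ e_\tau=e_{\sigma\tau}$ --- are exactly the checks implicit in the paper's ``obvious.''
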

\begin{proof}
(1) and (2) are obvious. (3) follows from Corollary~\ref{cor:ideal}.
\end{proof}

We recall here some facts about representations of symmetric groups
in characteristic zero. For details, see e.g.\ \cite{Fulton:1997}.
A Young diagram $\lambda=(\lambda_1,\lambda_2,\dots)$ is a non-increasing
sequence of natural numbers such that all but finitely many entries are zero.
We call $|\lambda|\coloneqq\sum_i\lambda_i$ the size of $\lambda$
and denote by $\varnothing=(0,0,\dots)$ the unique Young diagram of size zero.
We denote by $\cP$ the set of all Young diagrams
and by $\cP_m$ the set of those with size $m$.
There is a one to one correspondence
\begin{gather*}
\cP_m\bijection\{\text{irreducible representations of }\fS_m\}
\end{gather*}
and we denote by $S_\lambda$ the
irreducible representation of $\fS_m$
corresponding to $\lambda\in\cP_m$.

For each $\lambda\in\cP_m$, the $\kk[\fS_m]$-module $S_\lambda$
can be regarded as an $E_{t,m}$-module via the map $E_{t,m}\twoheadrightarrow\kk[\fS_m]$.
Its projective cover $P(S_\lambda)$ is isomorphic to $E_{t,m}$-module of the form
$E_{t,m} e_{t,\lambda}$ where $e_{t,\lambda}\in E_{t,m}$ is some primitive idempotent.
Then its image  $L_{t,\lambda}\coloneqq e_{t,\lambda}\ag{m}\in\cD_t$ is indecomposable and 
well-defined up to isomorphism.

\begin{remark}
In \cite{ComesOstrik:2011}, $L_{t,\lambda}$ is defined
as a direct summand of $\ag{1}^{\otimes m}$,
not $\ag{m}$.
\end{remark}

For a Krull--Schmidt $\kk$-linear category $\cC$,
we denote by $I(\cC)$ the set of isomorphism classes
of indecomposable objects in $\cC$.
For $U,V\in I(\cC)$, we say $U$ and $V$ \term{are in the same block} if
there exists a chain of indecomposable objects $U=U_0,U_1,\dots,U_m=V\in I(\cC)$ such that
either $\Hom_\cC(U_{i-1},U_i)$ or $\Hom_\cC(U_i,U_{i-1})$ is non-zero for each $i=1,\dots,m$.
We also use the term \term{block} to refer each pseudo-abelian full subcategory of
$\cC$ generated by all indecomposable objects in a same block.
A block is called \term{trivial} if it is equivalent to $\Rep(\kk)$.
Note that such a category is equivalent to the direct sum of all its blocks.

\begin{theorem}[Deligne~\cite{Deligne:2007}, Comes--Ostrik~\cite{ComesOstrik:2011}]
\label{thm:comes_ostrik}
\begin{enumerate}
\item $\lambda\mapsto L_{t,\lambda}$ gives a bijection
$\cP\bijectivemap I(\cD_t)$.
\item If $t\not\in\NN$ then all blocks in $\cD_t$ are trivial.
\item For $d\in\NN$,
non-trivial blocks in $\cD_d$
are parameterized by Young diagrams of size $d$.
For $\lambda\in\cP_d$, let us define
$\lambda^{(j)}=(\lambda^{(j)}_1,\lambda^{(j)}_2,\dots)\in\cP$ by
\begin{align*}
\lambda^{(j)}_i=\begin{dcases*}
\lambda_i+1,&if $1\leq i\leq j$,\\
\lambda_{i+1},&otherwise.
\end{dcases*}
\end{align*}
Then $L_{d,\lambda^{(0)}},L_{d,\lambda^{(1)}},\dots$ generate a block in $\cD_d$ and
all non-trivial blocks are obtained by this construction.
Morphisms between them are spanned by
\begin{align*}
\xymatrix{
L_{d,\lambda^{(0)}}\ar@(ul,ur)[]^{\id}\ar@<2pt>[r]^{\alpha_0}&
L_{d,\lambda^{(1)}}\ar@(ul,ur)[]^{\id}\ar@(dr,dl)[]^{\gamma_1}\ar@<2pt>[r]^{\alpha_1}\ar@<2pt>[l]^{\beta_0}&
L_{d,\lambda^{(2)}}\ar@(ul,ur)[]^{\id}\ar@(dr,dl)[]^{\gamma_2}\ar@<2pt>[r]^{\alpha_2}\ar@<2pt>[l]^{\beta_1}&
\cdots\ar@<2pt>[l]^{\beta_2}
}
\end{align*}
where $\beta_n\alpha_n=\alpha_{n-1}\beta_{n-1}=\gamma_n$ for $n\geq1$
and other non-trivial composites are zero.
The canonical functor $\cD_d\to\Rep(\kk[\fS_d])$ sends $L_{d,\lambda^{(0)}}$ to
$S_\lambda$ for each $\lambda\in\cP_m$
and the other indecomposable objects to the zero object.
\end{enumerate}
\end{theorem}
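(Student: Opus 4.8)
The plan is to reduce the whole theorem to a study of the tower of finite-dimensional endomorphism algebras $E_{t,m}=\End_{\cD_t}(\ag{m})$. The structural input is the filtration of $E_{t,m}$ by the two-sided ideals $\bigaag{H^{\geq e}(m;m)}$ furnished by Corollary~\ref{cor:ideal}, together with the isomorphism $E_{t,m}/\bigaag{H^{>m}(m;m)}\simeq\kk[\fS_m]$ recorded in the preceding lemma. Iterating Corollary~\ref{cor:ideal} shows that $I\coloneqq\bigaag{H^{>m}(m;m)}$ is nilpotent, so idempotents lift along $E_{t,m}\twoheadrightarrow\kk[\fS_m]$; this is what makes the objects $L_{t,\lambda}$ well defined and indecomposable. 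As $\cD_t$ is pseudo-abelian and generated by the $\ag{m}$, every object is a finite direct sum of summands of the $\ag{m}$, so by Theorem~\ref{thm:krull_schmidt} classifying $I(\cD_t)$ is the same as classifying the primitive idempotents of the $E_{t,m}$ up to the relation identifying those that cut out isomorphic summands across different $m$.

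For part~(1) I would first record the branching isomorphism obtained directly from Corollary~\ref{cor:partition_hom},
\[
\ag{1}\otimes\ag{m}\simeq\ag{m+1}\oplus\ag{m}^{\oplus m},
\]
which realizes each $\ag{m+1}$ as a summand of $\ag{1}^{\otimes(m+1)}$ and gives an inductive grip on the summands. Lifting the central primitive idempotents of $\kk[\fS_m]$ produces, for each $\lambda\in\cP_m$, the indecomposable $L_{t,\lambda}$; distinct $\lambda$ of the same size are non-isomorphic because they have distinct images $S_\lambda$ in $\Rep(\kk[\fS_m])$. The substantive point is completeness together with the fact that $L_{t,\lambda}$ is genuinely new at level $m=|\lambda|$: I would prove simultaneously, by induction on $m$, that the indecomposable summands of $\ag{m}$ are exactly the $L_{t,\mu}$ with $|\mu|\le m$. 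This follows from the cellular structure imposed by the length filtration, whose successive subquotients are governed by the group algebras $\kk[\fS_e]$ for $e\le m$, so that all simple $E_{t,m}$-modules---hence all indecomposable summands of $\ag{m}$---are indexed by Young diagrams of size at most $m$, and no diagram of size $m$ occurs in any $\ag{m'}$ with $m'<m$.

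Part~(2) asserts that $\cD_t$ is semisimple for $t\notin\NN$, which I would obtain from semisimplicity of each $E_{t,m}$ in this range. Endowing $E_{t,m}$ with the cellular structure coming from the length filtration, the Gram determinants of its cell modules---computed from the composition law of Lemma~\ref{lem:complaw}---are, up to units, products of linear factors $(t-a)$ with $a\in\NN$; for $t\notin\NN$ none vanish, so every cell form is non-degenerate and $E_{t,m}$ is semisimple. Since each object of $\cD_t$ is a summand of some $\ag{m}$, its endomorphism algebra is a corner $eE_{t,m}e$ of a semisimple algebra, hence semisimple; therefore $\Hom_{\cD_t}(L_{t,\lambda},L_{t,\mu})=0$ for $\lambda\neq\mu$ and $\End_{\cD_t}(L_{t,\lambda})=\kk$, which is exactly the triviality of every block.

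Part~(3), the block structure at $t=d\in\NN$, is where the genuine difficulty lies and is what I expect to be the main obstacle. The organizing tool is the canonical functor $F\colon\cD_d\to\cW_d(\Rep(\kk))$, which is full with morphism kernel spanned by the classes of $H^{>d}$ and whose target is equivalent to $\Rep(\kk[\fS_d])$ by Proposition~\ref{prop:mimic_wr}. Computing $F$ on generators shows that $F(\ag[d]{m})$ is the Young permutation module on $\fS_d/\fS_{d-m}$, and decomposing it identifies the indecomposables that survive $F$: exactly the $L_{d,\lambda^{(0)}}$ with $\lambda\in\cP_d$, each mapping onto the simple $S_\lambda$, while every other $L_{d,\mu}$ maps to zero. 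This pins down the heads $\lambda^{(0)}$ of the non-trivial blocks. The zigzag description then requires computing the graded pieces of $\Hom_{\cD_d}(L_{d,\lambda^{(i)}},L_{d,\lambda^{(j)}})$ directly from the decomposition $\bigoplus_r\bigaag{H_r}$, reading off the generators $\alpha_n,\beta_n,\gamma_n$ from the explicit composition law, and verifying the relations $\beta_n\alpha_n=\alpha_{n-1}\beta_{n-1}=\gamma_n$ together with the vanishing of all remaining composites; the vanishing specializations $d-a=0$ of the coefficients $P_u(d)$ are precisely what create these connecting morphisms. The delicate combinatorial step---showing that the sequence $\lambda^{(0)},\lambda^{(1)},\dots$ fills out precisely one block, that distinct $\lambda\in\cP_d$ yield distinct blocks, and that the remaining $L_{d,\mu}$ form trivial blocks---is the substance of Comes--Ostrik~\cite{ComesOstrik:2011}, whose analysis I would transcribe into the present $\aag{\bullet}$-basis and composition law.
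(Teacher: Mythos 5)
First, a point of orientation: the paper itself contains no proof of Theorem~\ref{thm:comes_ostrik} --- it is imported wholesale from Deligne~\cite{Deligne:2007} and Comes--Ostrik~\cite{ComesOstrik:2011}, the paper's only original ingredient being the construction of $L_{t,\lambda}$ via the surjection $E_{t,m}\twoheadrightarrow\kk[\fS_m]$ in the preceding lemma. Your outline follows the same route as that literature, but it contains a genuine error at its foundation. You claim that iterating Corollary~\ref{cor:ideal} shows $I=\bigaag{H^{>m}(m;m)}$ is nilpotent, and you lift idempotents through this allegedly nil kernel. This fails: with all three index sets of size $m$, the corollary gives $f=\max\{d+m,e+m\}-m=\max\{d,e\}$, so composing pieces of the filtration never raises the degree, and no nilpotency can be extracted. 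Indeed the claim is false outright: for $t\notin\NN$ your own part~(2) makes $E_{t,m}$ semisimple, and a nonzero two-sided ideal of a semisimple algebra is cut out by a central idempotent, never nilpotent. The correct mechanism is the one the paper actually uses to define $e_{t,\lambda}$: pull $S_\lambda$ back to a simple $E_{t,m}$-module and take the primitive idempotent attached to its projective cover --- i.e.\ lift idempotents modulo the Jacobson radical, which is always possible in a finite-dimensional algebra and requires nothing about $I$. Relatedly, lifting the \emph{central} primitive idempotents of $\kk[\fS_m]$, as you propose, would produce the whole $S_\lambda$-isotypic summand ($\dim S_\lambda$ copies of $L_{t,\lambda}$), not an indecomposable object; you need primitive, non-central ones.

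Two further gaps. In part~(2), semisimplicity of each corner $eE_{t,m}e$ controls only endomorphism rings; to conclude $\Hom_{\cD_t}(L_{t,\lambda},L_{t,\mu})=0$ for $\lambda\neq\mu$ you must realize both objects as summands of a single object. Your branching formula $\ag{1}\otimes\ag{m}\simeq\ag{m+1}\oplus\ag{m}^{\oplus m}$ (which is correct) does this, since it exhibits the relevant $\ag{m'}$ as summands of a common $\ag{1}^{\otimes N}$; but then what is needed is semisimplicity of the full partition algebra $\End_{\cD_t}(\ag{1}^{\otimes N})$, and your assertion that its Gram determinants factor as products of $(t-a)$ with $a\in\NN$ is itself a nontrivial theorem (due to Martin) that your sketch assumes rather than proves. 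Finally, in part~(3) you correctly identify the functor $\cD_d\to\Rep(\kk[\fS_d])$, full with morphism kernel $\bigaag{H^{>d}}$, and the vanishing specializations of the coefficients $P_u(d)$ as the source of the zigzag morphisms; but the decisive claims --- that $L_{d,\lambda^{(0)}},L_{d,\lambda^{(1)}},\dots$ exhaust exactly one block, that the hom spaces are precisely the stated zigzag with $\beta_n\alpha_n=\alpha_{n-1}\beta_{n-1}=\gamma_n$ and all other composites zero, and that every remaining indecomposable sits in a trivial block --- are explicitly deferred to Comes--Ostrik. Since the paper also merely cites this result, that deferral is consistent with the paper's treatment; but as a standalone proof proposal, part~(3) is a map of where the argument lives rather than an argument.
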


\subsection{Direct sum of Categories}

Let $\cC$ be a pseudo-abelian $\kk$-linear category with unit.
Assume that $\cC$ admits a direct sum decomposition
$\cC\simeq\bigoplus_{x\in X}\cC_x$
with index set $X$ (e.g.\ by blocks).
There is a unique $\cC_x$ which contains the unit object $\unit_\cC$
so let us denote its index by $x=0$
and put $X'\coloneqq X\setminus\{0\}$.

Recall that we have two kinds of $*$-product
\begin{align*}
\cS_{t_1}(\cC)\barboxtimes\cW_{d_2}(\cC)&\to\cS_{t_1+d_2}(\cC),&
\cW_{d_1}(\cC)\barboxtimes\cW_{d_2}(\cC)&\to\cW_{d_1+d_2}(\cC)\\
\ag[t_1]{U_I}*\br[d_2]{W_K}&\coloneqq\ag[t_1+d_2]{U_I\sqcup W_K},&
\br[d_1]{V_J}*\br[d_2]{W_K}&\coloneqq\br[d_1+d_2]{V_J\sqcup W_K}
\end{align*}
defined for objects which satisfy $\#J=d_1$ and $\#K=d_2$.
By definition, as pseudo-abelian $\kk$-linear category,
$\cS_t(\cC)$ is generated by objects of the form $\ag{U_I}$
where for each $i\in I$ its component $U_i$ is in some $\cC_{x_i}$.
For such a family we write $I_x\coloneqq\{i\in I\,|\,x_i=x\}$
and denote by $U_{I_x}$ the subfamily of $U_I$ indexed by $I_x$.
Then we can write
\begin{gather*}
\ag{U_I}\simeq\ag[t_0]{U_{I_0}}*\prod_{x\in X'}\br[d_x]{U_{I_x}}
\end{gather*}
using the $*$-product.
Here $d_x\coloneqq\#I_x$, $t_0\coloneqq t-\sum_{x\in X'}d_x$ and
$\prod$ denotes the $*$-product of finite terms for $x\in X'$ with $I_x\neq\varnothing$.

Let $U_I$ and $V_J$ be families of objects
of such form.
By the assumptions $\Hom_\cC(\unit_\cC,W)\simeq0\simeq\Hom_\cC(W,\unit_\cC)$
for all $W\in\cC_x$ when $x\neq0$.
So in the direct sum
\begin{gather*}
\Hom_{\cS_t(\cC)}(\ag{U_I},\ag{V_J})\simeq
\bigoplus_{r\in R(I,J)}
H_r(U_I;V_J)
\end{gather*}
we only need recollements $r\in R(I,J)$
all whose components $(i,j)\in r$
satisfy one of the conditions below:
\begin{gather*}
\begin{dcases*}
i,j\neq\varnothing\text{ and }x_i=x_j,\\
i=\varnothing\text{ and }x_j=0,\\
j=\varnothing\text{ and }x_i=0.
\end{dcases*}
\end{gather*}
Thus $\Hom_{\cS_t(\cC)}(\ag{U_I},\ag{V_J})=0$
unless $\#I_x=\#J_x$ for all $x\in X'$.
Otherwise
\begin{gather*}
\Hom_{\cS_t(\cC)}(\ag{U_I},\ag{V_J})\simeq
H(U_{I_0};V_{J_0})\otimes
\bigotimes_{x\in X'}H'(U_{I_x};V_{J_x})
\end{gather*}
where for each $U_{I'}=(U_1,\dots,U_d)$ and $V_{J'}=(V_1,\dots,V_d)$,
\begin{gather*}
H'(U_{I'};V_{J'})\coloneqq\bigoplus_{g\in\fS_d}
\Hom_\cC(U_1,V_{g(1)})\otimes\dots\otimes\Hom_\cC(U_d,V_{g(d)})
\end{gather*}
which is isomorphic to $\Hom_{\cW_d(\cC)}(\br{U_{I'}},\br{V_{J'}})$.
The same arguments also hold for $\cW_d(\cC)$ and we have following equivalences of $\kk$-linear category.

\begin{proposition}
\label{prop:St_for_direct_sum}
Let $\cC$ be a $\kk$-linear category which admits
a decomposition $\cC\simeq\bigoplus_{x\in X}\cC_x$.
Then the $*$-product induces a category equivalence
\begin{gather*}
\bigoplus_{\substack{d_x\in\NN\\d=\sum_{x\in X}d_x}}
\Bigl(\bigbarboxtimes_{x\in X}\cW_{d_x}(\cC_x)
\Bigr)\eqmap\cW_d(\cC).
\end{gather*}
In addition, assume that $\cC$ has the unit $\unit_\cC\in\cC_0$.
Put $X'\coloneqq X\setminus\{0\}$. Then
we have another equivalence
\begin{gather*}
\bigoplus_{\substack{t_0\in\kk,d_x\in\NN\\t=t_0+\sum_{x\in X'}d_x}}
\Bigl(\cS_{t_0}(\cC_0)\barboxtimes
\bigbarboxtimes_{x\in X'}\cW_{d_x}(\cC_x)
\Bigr)\eqmap\cS_t(\cC)
\end{gather*}
also induced by $*$-product.
\end{proposition}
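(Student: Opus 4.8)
The plan is to prove both equivalences at once: in each case the $*$-product defines an additive functor $\Psi$ from the left-hand direct-sum category to the target, and it suffices to show that $\Psi$ is fully faithful with image generating the target. Indeed, a fully faithful additive functor out of a pseudo-abelian category whose image generates a pseudo-abelian category is automatically an equivalence, since its image is then closed under direct sums and under the retracts splitting the idempotents pulled back along the full-faithful identification, and hence equals its own pseudo-abelian envelope. Both equivalences run on the same mechanism: objects are sorted by their \emph{colour profile} $(d_x)_x$, the number of tensor slots lying in each $\cC_x$; morphisms between objects of differing profiles vanish because $\Hom_\cC(W,W')=0$ for $W,W'$ in distinct summands; and within a fixed profile the relevant Hom space factors as a tensor product over the colours. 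The discussion preceding the proposition already records this factorisation as $\kk$-modules, so the real content is to upgrade it to a compatibility with composition.

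For the first equivalence I would argue directly on invariants, bypassing any invertibility hypothesis. Decomposing $\cC^{\barboxtimes d}\simeq\bigoplus_{(x_1,\dots,x_d)}\cC_{x_1}\barboxtimes\dots\barboxtimes\cC_{x_d}$ and noting that $\fS_d$ permutes the summands through its action on colour tuples, the orbits are indexed exactly by profiles $(d_x)_{x\in X}$ with $\sum_x d_x=d$, and the stabiliser of any tuple in a given orbit is the Young subgroup $\prod_x\fS_{d_x}$. Taking $\fS_d$-invariants commutes with the orbit decomposition, and the invariants supported on a single orbit are computed by induction from the stabiliser applied to $\bigbarboxtimes_x(\cC_x^{\barboxtimes d_x})^{\fS_{d_x}}=\bigbarboxtimes_x\cW_{d_x}(\cC_x)$; since induction from a Young subgroup is precisely the $*$-product, the orbit of profile $(d_x)$ contributes this summand, giving the asserted equivalence.

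For the second equivalence I would define $\Psi_t$ on the generating objects $\ag[t_0]{U_{I_0}}\barboxtimes\bigbarboxtimes_{x\in X'}\br[d_x]{U_{I_x}}$ by the $*$-product, sending them to $\ag{U_I}$, and on morphisms by the Hom-factorisation isomorphism $H(U_{I_0};V_{J_0})\otimes\bigotimes_{x\in X'}H'(U_{I_x};V_{J_x})\simeq\Hom_{\cS_t(\cC)}(\ag{U_I},\ag{V_J})$ established above; full faithfulness is then immediate, as the morphism assignment \emph{is} a bijection, once $\Psi_t$ is known to respect composition. This last point is the step I expect to be the main obstacle, because the composition law of $\cS_t(\cC)$ carries the polynomial coefficients $P_u(t)$. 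The key observation is that an orphan $(\varnothing,j,\varnothing)$ forces $V_j$ to admit nonzero morphisms both to and from $\unit_\cC$, hence $V_j\in\cC_0$, so all orphans are confined to the $\cC_0$-block. Writing $D\coloneqq\sum_{x\in X'}d_x$ for the rigid non-trivial colours and $u_0$ for the $\cC_0$-restriction of $u$, one gets $\#\pi_{1,3}(u)=\#\pi_{1,3}(u_0)+D$ and $\#u=\#u_0+D$, whence the substitution $a=b+D$ yields
\[
P_u(t)=\!\!\prod_{\#\pi_{1,3}(u_0)\le b<\#u_0}\!\!(t-D-b)=P_{u_0}(t-D)=P_{u_0}(t_0),
\]
which is exactly the coefficient governing composition in $\cS_{t_0}(\cC_0)$ since $t_0=t-D$. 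This identity is what makes the Hom-factorisation a functor.

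Finally, I would note a cleaner route to functoriality that avoids the explicit bookkeeping: all structure constants of $\Psi_t$ are polynomial in $t$ (equivalently in $t_0=t-D$), and for a fixed pair of objects only finitely many profiles contribute, so for $t=d\in\NN$ with $d\gg0$ the comparison functors $\cS_{d}\to\cW_{d}$ and $\cS_{d-D}(\cC_0)\to\cW_{d-D}(\cC_0)$ are isomorphisms on the relevant Hom spaces (being iso once the rank exceeds the sizes of the index sets). Thus composition-compatibility for $\Psi_t$ reduces to the first equivalence, already proved for $\cW_d$; since two polynomials in $t$ agreeing for all large integers coincide, functoriality follows for every $t\in\kk$. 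Essential surjectivity is immediate in both cases, because the homogeneous objects $\ag{U_I}$ generate $\cS_t(\cC)$ (respectively the induced objects generate $\cW_d(\cC)$) and lie in the image of the direct-sum-and-idempotent-preserving functor $\Psi$, so pseudo-abelianness completes the argument.
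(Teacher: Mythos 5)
Your treatment of the second equivalence is sound and runs on the same rails as the paper's own (largely implicit) proof: the paper's argument is exactly the computation preceding the proposition --- generators $\ag{U_I}$ sorted by colour profile, vanishing of cross-colour Hom spaces, and the factorisation $H(U_{I_0};V_{J_0})\otimes\bigotimes_{x\in X'}H'(U_{I_x};V_{J_x})$ realised by the $*$-product. Your coefficient identity $P_u(t)=P_{u_0}(t_0)$ is correct and is a genuinely useful addition: since $\Hom_\cC(\unit_\cC,W)=0=\Hom_\cC(W,\unit_\cC)$ for $W\in\cC_x$ with $x\in X'$, every class of $u$ outside colour $0$ is a full triple, so $\#u=\#u_0+D$ and $\#\pi_{1,3}(u)=\#\pi_{1,3}(u_0)+D$ with $D=\sum_{x\in X'}d_x$, and the shift $t_0=t-D$ does exactly what you say. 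This makes explicit the composition-compatibility the paper leaves unspoken, and your interpolation fallback is precisely the paper's standard device. Note, however, that even in this half the statement ``the $\br[d_x]{U_{I_x}}$ generate $\cW_{d_x}(\cC_x)$'' is Corollary~\ref{cor:ind_generator} and already uses invertibility of $d_x!$, i.e.\ the standing characteristic-zero assumption of the section.

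The genuine gap is in your first equivalence, exactly where you claim to ``bypass any invertibility hypothesis.'' The orbit decomposition of $\cC^{\barboxtimes d}$ and the identification of invariants supported on one orbit with stabiliser-invariants on a fibre are fine; but your final step silently identifies $\bigl(\bigbarboxtimes_{x}\cC_x^{\barboxtimes d_x}\bigr)^{\prod_x\fS_{d_x}}$ with $\bigbarboxtimes_x\cW_{d_x}(\cC_x)$, i.e.\ uses essential surjectivity of the canonical functor $\cC^G\barboxtimes\cD^H\to(\cC\barboxtimes\cD)^{G\times H}$ --- which the paper explicitly warns in Section~\ref{sec:res_ind} is fully faithful but \emph{not} invertible in general. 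And it genuinely fails without invertibility: take $\kk=\mathbb{F}_2$, $\cC_1=\cC_2=\Rep(\kk)$ and the profile $(2,2)$ inside $d=4$; the fibre category is then equivalent to $\Rep(\kk[\ZZ/2\times\ZZ/2])$, which in characteristic $2$ has infinitely many indecomposables, whereas $\cW_2(\cC_1)\barboxtimes\cW_2(\cC_2)\simeq\Rep(\kk[\ZZ/2])\barboxtimes\Rep(\kk[\ZZ/2])$ has exactly four (the external products of the trivial and the regular module), so for instance the three-dimensional indecomposable module over the Klein four group is not in the image. So the first equivalence is \emph{false} over general $\kk$, and your argument cannot be repaired without the section's hypothesis. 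Under characteristic zero the step is fixed by Corollary~\ref{cor:ind_generator} together with $\Ind^{G\times H}_{\{1\}}(U\boxtimes V)\simeq\Ind^{G}_{\{1\}}(U)\boxtimes\Ind^{H}_{\{1\}}(V)$, which reduces everything to induced generators --- that is, to the same generator-based argument the paper uses for both equivalences.
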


For example,
let us consider the case when $\cC$ is a
hom-finite pseudo-abelian $\kk$-linear category
whose unit object $\unit_\cC\in\cC$ has no extension,
i.e.\ is in a trivial block $\Rep(\kk)\subset\cC$.
So there is a pseudo-abelian full subcategory $\cC'\subset\cC$ such that
$\cC\simeq\Rep(\kk)\oplus\cC'$.
By applying the proposition, we have
\begin{gather*}
\cS_t(\cC)\simeq
\bigoplus_{d\in\NN}\,\Bigl(
\cD_{t-d}\barboxtimes
\cW_d(\cC')
\Bigr).
\end{gather*}
Let us take indecomposable objects $L\in\cD_{t-d}$
and $U\in\cW_d(\cC')$ respectively
and consider $L*U\in\cS_t(\cC)$.
By Theorem~\ref{thm:comes_ostrik}, $\End_{\cD_{t-d}}(L)$ is
isomorphic to either $\kk$ or $\kk[\gamma]/(\gamma^2)$.
Thus its endomorphism ring
\begin{gather*}
\End_{\cS_t(\cC)}(L*U)\simeq\End_{\cD_{t-d}}(L)\otimes\End_{\cW_d(\cC')}(U)
\end{gather*}
is still local and $L*U$ is also an indecomposable object.
By Theorem~\ref{thm:krull_schmidt},
all indecomposable objects in $\cS_t(\cC)$ is of this form
and each block in $\cS_t(\cC)$ is therefore
equivalent to a tensor product of
two blocks in $\cD_{t-d}$ and $\cW_d(\cC')$ respectively.

\subsection{For Semisimple category}

A hom-finite pseudo-abelian $\kk$-linear category $\cC$
is called \term{semisimple} if every non-zero morphism
between indecomposable objects in $\cC$ is
an isomorphism, or equivalently,
if the endomorphism ring of each object in $\cC$
is a finite dimensional semisimple $\kk$-algebra.
We state a simple criterion
for semisimplicity of $\cS_t(\cC)$.

\begin{proposition}
Let $\cC$ be a hom-finite pseudo-abelian $\kk$-linear category
with unit.
Then $\cS_t(\cC)$ is semisimple if and only if
$t\not\in\NN$ and $\cC$ is semisimple.
\end{proposition}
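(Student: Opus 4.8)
The plan is to prove the two implications separately, disposing of necessity by two short reductions and devoting the bulk of the work to sufficiency, where the block decomposition of Proposition~\ref{prop:St_for_direct_sum} does the heavy lifting. For necessity I argue contrapositively. Suppose first that $\cC$ is not semisimple, so by hom-finiteness there is an object $W\in\cC$ whose endomorphism algebra $\End_\cC(W)$ is a non-semisimple finite-dimensional $\kk$-algebra. Writing $E\coloneqq\End_{\cS_t(\cC)}(\ag{W})$, Corollary~\ref{cor:ideal} shows that $I\coloneqq\bigaag{H^{\geq2}(W;W)}$ is a two-sided ideal complementary to the length-one summand $\bigaag{H^{1}(W;W)}$, and the $\kk$-linear functor $\ag{\bullet}$ of Proposition~\ref{prop:diagram_transform}(1) descends to an algebra isomorphism $E/I\simeq\End_\cC(W)$. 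Since any quotient of a finite-dimensional semisimple algebra is semisimple, $E$ must fail to be semisimple, whence $\cS_t(\cC)$ is not semisimple.

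Suppose instead that $t=d\in\NN$. The inclusion $\Triv_\kk\hookrightarrow\cC$ sending the unit to $\unit_\cC$ is fully faithful, as both endomorphism rings are $\kk$, and since $\cS_t$ builds its morphism spaces functorially out of the $\Hom_\cC$'s, the induced functor $\cS_d(\Triv_\kk)\simeq\cD_d\to\cS_d(\cC)$ is again fully faithful. By Theorem~\ref{thm:comes_ostrik} the category $\cD_d$ possesses a non-trivial block, hence an indecomposable object $L$ with $\End_{\cD_d}(L)\simeq\kk[\gamma]/(\gamma^2)$; its image in $\cS_d(\cC)$ retains this non-semisimple endomorphism ring, so $\cS_d(\cC)$ is not semisimple. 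Together these two cases prove the ``only if'' direction.

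For sufficiency, assume $t\notin\NN$ and $\cC$ semisimple. First I decompose $\cC\simeq\bigoplus_{x\in X}\cC_x$ into blocks; semisimplicity forces each block to be generated by a single simple object, so the unit block is $\cC_0\simeq\Rep(\kk)$ and each remaining block is $\cC_x\simeq\Rep(D_x)$ for a finite-dimensional division algebra $D_x$, which is separable because $\kk$ has characteristic zero. Feeding this decomposition into Proposition~\ref{prop:St_for_direct_sum} realizes $\cS_t(\cC)$ as a direct sum of categories $\cD_{t_0}\barboxtimes\bigbarboxtimes_{x\in X'}\cW_{d_x}(\cC_x)$ with $t_0=t-\sum_{x\in X'}d_x$ and only finitely many $d_x$ nonzero in each summand. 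The decisive point is that $t\notin\NN$ while $\sum_x d_x\in\NN$, so $t_0\notin\NN$ in every summand; hence each Deligne factor $\cD_{t_0}$ is semisimple with all blocks trivial by Theorem~\ref{thm:comes_ostrik}(2), its simple objects having endomorphism ring $\kk$.

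It remains to check that each summand, and hence their direct sum, is semisimple. By Proposition~\ref{prop:mimic_wr}(3) one has $\cW_{d_x}(\cC_x)\simeq\Rep(D_x\wr\fS_{d_x})$, and $D_x\wr\fS_{d_x}=D_x^{\otimes d_x}\rtimes\fS_{d_x}$ is separable over the characteristic-zero field $\kk$, so its representation category is semisimple. Proposition~\ref{prop:mimic_tensor}(3) then gives $\bigbarboxtimes_{x\in X'}\cW_{d_x}(\cC_x)\simeq\Rep\bigl(\bigotimes_x(D_x\wr\fS_{d_x})\bigr)$, again $\Rep$ of a separable algebra and so semisimple; finally, because $\cD_{t_0}$ is split semisimple, tensoring with it merely replaces a category $\cB$ by a direct sum of copies of $\cB$ indexed by the simple objects of $\cD_{t_0}$, preserving semisimplicity. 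Summing over the index data yields semisimplicity of $\cS_t(\cC)$. I expect this sufficiency half to be the main obstacle: every step secretly uses that $\kk$ is perfect so that $\cW_d$ and the products $\barboxtimes$ preserve semisimplicity, and the argument only gets off the ground because the non-integrality of $t$ propagates to each $t_0$, letting Comes--Ostrik control the Deligne factor.
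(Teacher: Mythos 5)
Your proof is correct, and on the main axis it follows the paper: the sufficiency direction is exactly the paper's argument (decompose $\cC$ with the unit block split off, apply Proposition~\ref{prop:St_for_direct_sum} with $t_0=t-\sum_x d_x\not\in\NN$, then invoke Theorem~\ref{thm:comes_ostrik}~(2) together with the fact that separability/semisimplicity survives tensor and wreath products in characteristic zero), merely spelled out in more detail via Propositions~\ref{prop:mimic_tensor}~(3) and \ref{prop:mimic_wr}~(3); likewise your $t\in\NN$ case is the paper's observation that $\cD_t$ sits inside $\cS_t(\cC)$ as a full subcategory. The one genuinely different step is the ``$\cC$ not semisimple'' direction. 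The paper uses the surjection $\End_{\cS_t(\cC)}(\ag{U_i})\twoheadrightarrow\End_\cC(U_i)$ to lift idempotents through projective covers, producing indecomposable summands $e_i\ag{U_i}$ and a non-zero non-invertible morphism $e_2\ag{\varphi}e_1$ between them, i.e.\ it works with the morphism-theoretic characterization of semisimplicity. You instead work with the same surjection at the algebra level: by Corollary~\ref{cor:ideal} the summand $I=\bigaag{H^{\geq2}(W;W)}$ is a two-sided ideal with $\End_{\cS_t(\cC)}(\ag{W})/I\simeq\End_\cC(W)$, and since quotients of finite-dimensional semisimple algebras are semisimple, non-semisimplicity of $\End_\cC(W)$ propagates upward. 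Your version is shorter and avoids the idempotent-lifting machinery (it does require checking, as you implicitly do, that the length-one composition law is undeformed modulo $I$, which follows from the $\aag{\bullet}$ composition formula); the paper's version buys explicit indecomposable witnesses, which is in the spirit of the classification carried out in the rest of that section. Both hinge on the same structural fact, so this is a mild streamlining rather than a new idea, and everything you assert is supported by results actually proved in the paper.
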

\begin{proof}
If $t\in\NN$, $\cS_t(\cC)$ contains a non-semisimple full subcategory
$\cD_t$ so $\cS_t(\cC)$ itself is not semisimple.
If $\cC$ is not semisimple,
there are indecomposable objects $U_1,U_2\in I(\cC)$ 
and non-zero morphism $\varphi\colon U_1\to U_2$
which is not invertible.
For $i=1,2$, we have a $\kk$-algebra homomorphism
$\End_{\cS_t(\cC)}(\ag{U_i})\twoheadrightarrow\End_\cC(U_i)$.
By taking its projective cover,
we obtain an idempotent $e_i\in\End_{\cS_t(\cC)}(\ag{U_i})$
such that its image $e_i\ag{U_i}$ is indecomposable
and $e_2\ag{\varphi}e_1\colon e_1\ag{U_1}\to e_2\ag{U_2}$
is not zero or invertible.
Thus $\cS_t(\cC)$ is not semisimple either in this case.

Conversely assume that $t\not\in\NN$ and $\cC$ is semisimple.
Then $\cC\simeq\Rep(\kk)\oplus\cC'$
for some semisimple full subcategory $\cC'\subset\cC$.
Since semisimplicity of $\kk$-algebra is preserved under
tensor products and wreath products in characteristic zero,
we have that $\cS_t(\cC)$ is also semisimple
by Proposition~\ref{prop:St_for_direct_sum}
and Theorem~\ref{thm:comes_ostrik}~(2).
\end{proof}

Now assume that $\cC$ is semisimple
and all blocks are trivial, i.e.,
every indecomposable object $U\in I(\cC)$ satisfies
$\End_\cC(U)\simeq\kk$.
We give a complete description of the $\kk$-linear category $\cS_t(\cC)$
for this case parallel to Theorem~\ref{thm:comes_ostrik}.

Let $\cP^\cC$ be the set
\begin{gather*}
\cP^\cC\coloneqq\{\lambda\colon I(\cC)\to\cP\;|\;\lambda(U)=\varnothing\text{ for all but finitely many }U\}.
\end{gather*}
For each $\lambda\in\cP^\cC$, we write $|\lambda|\coloneqq\sum_U|\lambda(U)|$
and $|\lambda|'\coloneqq\sum_{U\neq\unit_\cC}|\lambda(U)|$.
For each $d\in\NN$, put
$\cP^\cC_d\coloneqq\{\lambda\in\cP^\cC\;|\;|\lambda|=d\}$.

Take an idempotent $f_\lambda\in\kk[\fS_d]$
for each $\lambda\in\cP_d$
which satisfies $S_\lambda\simeq\kk[\fS_d]f_\lambda$.
For $U\in I(\cC)$, since $\End_{\cW_d(\cC)}(U^{*d})\simeq\kk[\fS_d]$,
we can define the object $U^{\boxtimes\lambda}\in I(\cW_d(\cC))$ by
$U^{\boxtimes\lambda}\coloneqq f_\lambda U^{*d}$.
Let
\begin{align*}
\overline{L}_{t,\lambda}&\coloneqq L_{t-|\lambda|',\lambda(\unit_\cC)}
*\prod_{U\neq\unit_\cC}U^{\boxtimes\lambda(U)}\in\cS_t(\cC)
\intertext{for $\lambda\in\cP^\cC$ and}
\overline{S}_\lambda&\coloneqq S_{\lambda(\unit_\cC)}
*\prod_{U\neq\unit_\cC}U^{\boxtimes\lambda(U)}\in\cW_d(\cC)
\end{align*}
for $\lambda\in\cP^\cC_d$.
Applying Proposition~\ref{prop:St_for_direct_sum}
to the block decomposition of $\cC$, we have
$\overline{L}_{t,\lambda}$ (resp.\ $\overline{S}_\lambda$)
is indecomposable and
all indecomposable objects in $\cS_t(\cC)$ (resp.\ $\cW_d(\cC)$) are of such form.
We can now extend Theorem~\ref{thm:comes_ostrik}, the result of Comes and Ostrik.

\begin{theorem}
\begin{enumerate}
\label{thm:blocks}
\item $\lambda\mapsto \overline{L}_{t,\lambda}$ gives a bijection $\cP^\cC\bijectivemap I(\cS_t(\cC))$.
\item If $t\not\in\NN$ then all blocks in $\cS_t(\cC)$ are trivial.
\item
For $d\in\NN$, non-trivial blocks in $\cS_d(\cC)$
are parameterize by $\cP^\cC_d$.
The non-trivial block corresponding to $\lambda\in\cP^\cC_d$
is generated by indecomposable objects
$\overline{L}_{d,\lambda^{(0)}},\overline{L}_{d,\lambda^{(1)}},\dotsc$.
Here $\lambda^{(0)},\lambda^{(1)},\dotsc\in\cP^\cC$ is given by
\begin{align*}
\lambda^{(j)}(U)\coloneqq\begin{dcases*}
\lambda(\unit_\cC)^{(j)},&if $U=\unit_\cC$,\\
\lambda(U),&otherwise.
\end{dcases*}
\end{align*}
This block is equivalent to a non-trivial block in $\cD_d$
which is described in Theorem~{\rm \ref{thm:comes_ostrik}~(3)}.
The canonical functor $\cS_d(\cC)\to\cW_d(\cC)$ sends $\overline{L}_{d,\lambda^{(0)}}$ to $\overline{S}_\lambda$ and
the other indecomposable objects to the zero object.
\end{enumerate}
\end{theorem}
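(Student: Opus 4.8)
The plan is to reduce the whole statement to the already-settled case of Deligne's category $\cD_{t_0}=\cS_{t_0}(\Rep(\kk))$ by means of the $*$-product decomposition of Proposition~\ref{prop:St_for_direct_sum}. Since $\cC$ is semisimple with every block trivial, its block decomposition reads $\cC\simeq\bigoplus_{U\in I(\cC)}\cC_U$ with each $\cC_U\simeq\Rep(\kk)$, and $\cC_{\unit_\cC}$ is the block containing the unit. Feeding this into Proposition~\ref{prop:St_for_direct_sum} gives
\begin{gather*}
\cS_t(\cC)\simeq\!\!\!\bigoplus_{\substack{t_0\in\kk,\ d_U\in\NN\\ t=t_0+\sum_{U\neq\unit_\cC}d_U}}\!\!\!
\Bigl(\cD_{t_0}\barboxtimes\bigbarboxtimes_{U\neq\unit_\cC}\cW_{d_U}(\cC_U)\Bigr),
\end{gather*}
where $\cW_{d_U}(\cC_U)\simeq\Rep(\kk[\fS_{d_U}])$ is a semisimple category whose simple objects are the $U^{\boxtimes\mu}$ for $\mu\in\cP_{d_U}$, each with $\End\simeq\kk$ because the irreducible representations of symmetric groups are rational. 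Thus the structure of $\cS_t(\cC)$ is governed entirely by Theorem~\ref{thm:comes_ostrik} together with the elementary characteristic-zero representation theory of the $\fS_{d_U}$.

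For part (1) I would identify the indecomposable objects summand by summand. Each tensor factor is Krull--Schmidt, and by Proposition~\ref{prop:mimic_tensor}(2) the hom spaces of a $\barboxtimes$-product are the tensor products of the hom spaces of the factors; over a field of characteristic zero this forces the indecomposables of the product to be exactly the $\barboxtimes$-products of indecomposables of the factors. Their endomorphism rings are the tensor products $\End_{\cD_{t_0}}(L_{t_0,\lambda(\unit_\cC)})\otimes\bigotimes_U\kk$, which are local since each $\fS$-factor contributes only $\kk$. Translating through the $*$-product and matching the constraint $t_0=t-\sum_U|\lambda(U)|=t-|\lambda|'$ identifies these objects with the $\overline{L}_{t,\lambda}$ for $\lambda\in\cP^\cC$, yielding the bijection $\cP^\cC\bijectivemap I(\cS_t(\cC))$. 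Most of this is already recorded in the discussion preceding the statement.

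For the block analysis I would use that morphisms vanish between distinct summands of the decomposition, and, within a summand, between distinct simples of each semisimple factor $\cW_{d_U}(\cC_U)$. Hence every block of $\cS_t(\cC)$ has the form $\cB\barboxtimes\bigbarboxtimes_U(\text{a fixed simple})$ for a block $\cB$ of some $\cD_{t_0}$, and since $\barboxtimes$-ing with one-object trivial blocks does not change the equivalence class, each such block is equivalent to $\cB$. For part (2), if $t\notin\NN$ then $t_0=t-\sum_U d_U\notin\NN$ for every contributing summand, so Theorem~\ref{thm:comes_ostrik}(2) makes every $\cB$ trivial. For part (3) with $t=d$, a block is non-trivial exactly when $t_0\in\NN$ and $\cB$ is a non-trivial block of $\cD_{t_0}$; labelling $\cB$ by its Comes--Ostrik parameter $\lambda(\unit_\cC)\in\cP_{t_0}$ (note $|\lambda(\unit_\cC)|=d-|\lambda|'=t_0$) and the chosen simples by $\lambda(U)\in\cP_{d_U}$ assembles precisely a $\lambda\in\cP^\cC_d$. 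The generating objects $\overline{L}_{d,\lambda^{(j)}}$ are the images under the $*$-product of the $L_{t_0,\lambda(\unit_\cC)^{(j)}}$, so the block inherits the quiver-with-relations description of Theorem~\ref{thm:comes_ostrik}(3) and is equivalent to a non-trivial block of $\cD_d$. Finally, tracing the canonical functor $\cS_d(\cC)\to\cW_d(\cC)$ through the two compatible decompositions, it restricts on the Deligne factor to the canonical $\cD_{t_0}\to\cW_{t_0}(\Rep(\kk))$, which by Theorem~\ref{thm:comes_ostrik}(3) sends $L_{t_0,\lambda(\unit_\cC)^{(0)}}\mapsto S_{\lambda(\unit_\cC)}$ and annihilates the higher $L_{t_0,\lambda(\unit_\cC)^{(j)}}$; hence $\overline{L}_{d,\lambda^{(0)}}\mapsto\overline{S}_\lambda$ and $\overline{L}_{d,\lambda^{(j)}}\mapsto0$ for $j\geq1$.

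The main obstacle I anticipate is not any single computation but the careful verification of multiplicativity under $\barboxtimes$: that indecomposables of the product are exactly products of indecomposables with local endomorphism rings, and that no unexpected morphisms arise between products of non-isomorphic factors. This rests on Proposition~\ref{prop:mimic_tensor}(2) and characteristic-zero semisimplicity, and is the same argument sketched for a single factor $\cC'$ in the paragraph before the theorem. Once this is in place, the remaining effort is the bookkeeping needed to align the $*$-decompositions of $\cS_d(\cC)$ and $\cW_d(\cC)$ so that the canonical functor can be read off correctly on each block.
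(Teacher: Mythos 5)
Your proposal is correct and follows essentially the same route as the paper: the paper also obtains Theorem~\ref{thm:blocks} by applying Proposition~\ref{prop:St_for_direct_sum} to the block decomposition $\cC\simeq\bigoplus_{U\in I(\cC)}\cC_U$ with each $\cC_U\simeq\Rep(\kk)$, identifying indecomposables as $*$-products with local endomorphism rings (Krull--Schmidt), and importing the block structure and the behaviour of the canonical functor from Theorem~\ref{thm:comes_ostrik}. Your worry about multiplicativity under $\barboxtimes$ is resolved exactly as the paper does it, since the hom spaces of $\boxtimes$-products are tensor products by definition and the non-unit factors contribute $\End\simeq\kk$.
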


\appendix

\section{Tensor categories with additional structures}
\def\appendixname{}
\label{sec:cat_with_str}

There are various kinds of additional structures on tensor categories
which are introduced in many literature (e.g.\ see \cite{Selinger:2011})
and used in various fields of mathematics, physics and even computer science.
It is straightforward to show that these structures
are compatible with standard operations on categories:
taking an envelope, a tensor product or
a category of invariants under group action.
In this appendix we introduce that
our 2-functor $\cS_t$ also respects many of them.

\subsection{Duals}

\begin{definition}
\label{def:dual}
Let $\cC$ be a tensor category.
A \term{left dual} of an object $U\in\cC$ is
an object $U^*\in\cC$ along with morphisms
$\ev_U\colon U^*\otimes U\to\unit_\cC$ and $\coev_U\colon\unit_\cC\to U\otimes U^*$
such that the composites
\begin{gather*}
\xymatrix @R 5pt @C 50pt{
U \ar[r]^-{\coev_U\otimes\id_U}&
U\otimes U^*\otimes U \ar[r]^-{\id_U\otimes\ev_U}&
U\rlap{,}\\
U^* \ar[r]^-{\id_{U^*}\otimes\coev_U}&
U^*\otimes U\otimes U^* \ar[r]^-{\ev_U\otimes\id_{U^*}}&
U^*
}
\end{gather*}
are both identities.
Such triple $(U^*,\ev_U,\coev_U)$ is unique up to unique isomorphism when it exists.
For a morphism $\varphi\colon U\to V$ between objects which have left duals,
its left dual $\varphi^*\colon V^*\to U^*$ is defined as the composite
\begin{gather*}
V^*\xrightarrow{\id_{V^*}\otimes\coev_U}
V^*\otimes U\otimes U^*\xrightarrow{\id_{V^*}\otimes\varphi\otimes\id_{U^*}}
V^*\otimes V\otimes U^*\xrightarrow{\ev_V\otimes\id_{U^*}}
U^*.
\end{gather*}

The \term{right dual} ${}^*U$ is defined similarly
with the reversed tensor product so ${}^*(U^*)\simeq U\simeq ({}^*U)^*$.
The tensor category $\cC$ is called \term{rigid} (or \term{autonomous})
if every its object has both left and right duals.
\end{definition}

By definition $\unit_\cC^*\simeq\unit_\cC$ and
there is a functorial isomorphism $(U\otimes V)^*\simeq V^*\otimes U^*$ when they exist.
In addition, if $\sigma_\cC$ is a braiding in $\cC$, $\sigma_\cC(U,V)^*=\sigma_\cC(U^*,V^*)$ via the isomorphism.
So a rigid (braided) tensor category $\cC$ is (braided) tensor equivalent to its opposite category $\cC^\op$
via the functor $U\mapsto U^*$ if we define the suitable structure on $\cC^\op$.

Note that the left dual $U^*$ of $U$ need not to be isomorphic to its right dual ${}^*U$.
In a rigid tensor category every tensor transformation $\bullet^*\to{}^*\bullet$
is automatically invertible and such a functorial isomorphism is called a \term{pivot}.

\begin{example}
When $A$ is a Hopf algebra over $\kk$,
the $\kk$-tensor category $\Rep(A)$ is rigid.
For $U\in\Rep(A)$, its left dual $U^*$ and right dual ${}^*U$ are both defined as an $A^\op$-module $\Hom_\kk(U,\kk)$
and $A$ acts on them via the antipode $\gamma_A\colon A\to A^\op$ and its inverse $\gamma_A^{-1}$ respectively.
Note that $\Mod(A)$ is not rigid since 
we can not define a suitable map $\unit_A\to U\otimes U^*$
for an arbitrary $U\in\Mod(A)$.
\end{example}

If $U\in\cC$ has a left dual $U^*$,
$\ag{U}\in\cS_t(\cC)$ also has a left dual $\ag{U^*}$.
The equipped morphisms are the composites
\begin{gather*}
\ag{U^*}\otimes\ag{U}\xrightarrow{\mu_\cC(U^*,U)}
\ag{U^*\otimes U}\xrightarrow{\ag{\ev_U}}
\ag{\unit_\cC}\xrightarrow{\epsilon_\cC}
\unit_{\cS_t(\cC)},\\
\unit_{\cS_t(\cC)}\xrightarrow{\iota_\cC}
\ag{\unit_\cC}\xrightarrow{\ag{\coev_U}}
\ag{U\otimes U^*}\xrightarrow{\Delta_\cC(U,U^*)}
\ag{U}\otimes\ag{U^*}
\end{gather*}
illustrated as
\begin{align*}
\begin{xy}
(-6,6)*={\bullet}="v"+(0,3)*{U^*};
(6,6)*={\bullet}="u"+(0,3)*{U},
(0,-4)*+{\ev_U}="p"*\frm{-},
(0,1)="z",
"v";"z" **\crv{(-5,2)},
"u";"z" **\crv{(5,2)},
"z";"p" **\dir{-},
"p" \ar @{-x} (0,-9),
\end{xy},&&
\begin{xy}
(-6,-6)*={\bullet}="v"+(0,-3)*{U};
(6,-6)*={\bullet}="u"+(0,-3)*{U^*},
(0,4)*+{\coev_U}="p"*\frm{-},
(0,-1)="z",
"v";"z" **\crv{(-5,-2)},
"u";"z" **\crv{(5,-2)},
"z";"p" **\dir{-},
"p" \ar @{-x} (0,9),
\end{xy}.
\end{align*}

Conversely, suppose that $\ag{U}$ has a left dual $\ag{U}^*$.
The equation
$\id_{\ag{U}^*}=(\ev_{\ag{U}}\otimes\id_{\ag{U}^*})\circ(\id_{\ag{U}^*}\otimes\coev_{\ag{U}})$
implies that $\id_{\ag{U}^*}$ factors through some $\ag{V}$
so $\ag{U}^*$ is isomorphic to the image of an idempotent
$f\colon\ag{V}\to\ag{V}$.
Now $f$ can be decomposed as
\begin{gather*}
f=\ag{e}+\sum_i\ag{\varphi_i}\otimes\ag{\psi_i}
\end{gather*}
by $e\colon V\to V$ and $\varphi_i\colon V\to\unit_\cC$, $\psi_i\colon\unit_\cC\to V$.
Then $e$ is also idempotent and its image $eV$ is a left dual of $U$.
The same holds for right duals and thus
$\cS_t(\cC)$ is rigid if and only if $\cC$ is rigid.

\subsection{Traces}

\begin{definition}
A \term{(right) trace} on a $\kk$-tensor category $\cC$
is a family $\{\Tr_X\}_{X\in\cC}$ of $\kk$-linear transformations
$\Tr_X\colon\Hom_\cC(\bullet\otimes X,\bullet\otimes X)\to\Hom_\cC(\bullet,\bullet)$
which satisfies
\begin{enumerate}
\item $\Tr_X(\varphi\circ(\id_U\otimes\psi))=\Tr_Y((\id_V\otimes\psi)\circ\varphi)$
for each $\varphi\colon U\otimes Y\to V\otimes X$ and $\psi\colon X\to Y$,
\item $\Tr_X(\varphi\otimes\psi)=\varphi\otimes\Tr_X(\psi)$,
\item $\Tr_{\unit_\cC}(\varphi)=\varphi$ and $\Tr_{X\otimes Y}(\varphi)=\Tr_X(\Tr_Y(\varphi))$.
\end{enumerate}
\end{definition}

We remark that if the category is rigid there is a one to one correspondence
between traces and pivots.
For a given trace we can define a pivot
$p_\cC(U)\coloneqq\Tr_U(U^*\otimes U\xrightarrow{\ev_U}\unit_\cC\xrightarrow{\coev_{{}^*U}}{}^*U\otimes U)$.
Conversely, each pivot $p_\cC\colon\bullet^*\to{}^*\bullet$ induces a trace defined by
\begin{gather*}
\Tr_X(\varphi)\coloneqq
(U\xrightarrow{\id_U\otimes\coev_X}
U\otimes X\otimes X^*\xrightarrow{\varphi\otimes p_\cC(X)}
V\otimes X\otimes{}^*X\xrightarrow{\id_V\otimes\ev_{{}^*X}}
V)
\end{gather*}
for $\varphi\colon U\otimes X\to V\otimes X$.

For each trace on $\cC$ there is a unique trace on $\cS_t(\cC)$ which satisfies
\begin{gather*}
\ag{\Tr_X(\varphi)}=
\Tr_{\ag{X}}(
\ag{U}\otimes\ag{X}\xrightarrow{\mu_\cC(U,X)}
\ag{U\otimes X}\xrightarrow{\ag{\varphi}}
\ag{V\otimes X}\xrightarrow{\Delta_\cC(V,X)}
\ag{V}\otimes\ag{X})
\end{gather*}
for every $\varphi\colon U\otimes X\to V\otimes X$.
To construct this trace it suffices to define transformations
$\Tr_{\ag{X}}$ for each $X\in\cC$.
First let $f\mapsto\bar f$ be an idempotent endomorphism on
$\Hom_{\cS_t(\cC)}(A\otimes\ag{X},B\otimes\ag{X})$
defined by
\begin{gather*}
\bar f\coloneqq(
A\otimes\ag{X}\xrightarrow{\id_A\otimes\Delta_\cC(X,\unit_\cC)}
A\otimes\ag{X}\otimes\ag{\unit_\cC}\xrightarrow{f\otimes\id_{\ag{\unit_\cC}}}
B\otimes\ag{X}\otimes\ag{\unit_\cC}\xrightarrow{\id_B\otimes\mu_\cC(X,\unit_\cC)}
B\otimes\ag{X}).
\end{gather*}
By the axioms of trace it must satisfy $\Tr_{\ag{X}}(\bar f)=\Tr_{\ag{X}}(f)$.
Now let $U_I$ and $V_J$ be families of objects in $\cC$.
The image of $\bar\bullet$ on $\Hom_{\cS_t(\cC)}(\ag{U_I}\otimes\ag{X},\ag{V_J}\otimes\ag{X})$
is the direct sum 
\begin{gather*}
\bigoplus_{\substack{i\in I\sqcup\{\varnothing\}\\j\in J\sqcup\{\varnothing\}}}
\ag{H(U_{I\setminus\{i\}};V_{J\setminus\{j\}})\otimes\Hom_\cC(U_i\otimes X,V_j\otimes X)}.
\end{gather*}
For $\Phi\in H_r(U_{I\setminus\{i\}};V_{J\setminus\{j\}})$ and
$\psi\colon U_i\otimes X\to V_j\otimes X$, the trace of $\ag{\Phi\otimes\psi}$
is defined by and must be
\begin{gather*}
\Tr_{\ag{X}}(\ag{\Phi\otimes\psi})\coloneqq
\begin{dcases*}
(t-\#r)\Tr_X(\psi)\cdot\ag{\Phi},&if $i=j=\varnothing$,\\
\ag{\Phi\otimes\Tr_X(\psi)},&otherwise.
\end{dcases*}
\end{gather*}
Then these transformations satisfy
the axioms of trace.
It is easy to prove that every trace on $\cS_t(\cC)$ is
obtained by this construction.
Note that in a braided tensor category the trace we defined satisfies the equation
$\Tr_{\ag{X}}(\tau_\cC(X,X))=\ag{\Tr_X(\sigma_\cC(X,X))}$.

\subsection{Twists}

\begin{definition}
A \term{twist} on a braided tensor category $\cC$
is a functorial isomorphism $\theta_\cC(U)\colon U\to U$ such that
$\theta_\cC(\unit_\cC)=\id_{\unit_\cC}$ and
$\theta_\cC(U\otimes V)=\sigma_\cC(V,U)\circ(\theta_\cC(V)\otimes\theta_\cC(U))\circ\sigma_\cC(U,V)$.
A \term{balanced tensor category} is a braided tensor category equipped with a twist.
It is called a \term{ribbon category} (or a \term {tortile category}) if it
is rigid and satisfies $\theta_\cC(U^*)=\theta_\cC(U)^*$.
\end{definition}

For example, each trace in $\cC$ induces a twist $\theta_\cC(U)\coloneqq\Tr_U(\sigma_\cC(U,U))$.
When $\cC$ is rigid, this trace can be recovered from the pivot
\begin{gather*}
U^*\xrightarrow{\id_{U^*}\otimes\coev_{{}^*U}}
U^*\otimes{}^*U\otimes U\xrightarrow{\sigma^{-1}_\cC(U^*,{}^*U)\otimes\theta_\cC(U)}
{}^*U\otimes U^*\otimes U\xrightarrow{\id_{{}^*U}\otimes\ev_U}
{}^*U
\end{gather*}
so pivots, traces and twists are the same things in a rigid braided tensor category.

Similarly as traces, twists on a braided tensor category $\cC$ and
those on $\cS_t(\cC)$ are in one to one correspondence
via the 2-functor $\cS_t$ for transformations with unit.
In particular, $\cS_t$ also sends a balanced tensor category to a balanced tensor category
and a ribbon category to a ribbon category.
One of the most interesting application of tensor category theory
is that a ribbon category induces an oriented link invariant
such as (a constant multiple of) the Jones polynomial
or the HOMFLY-PT polynomial.
Now let $J$ and $J_t$ be link invariants
induced by ribbon categories $\cC$ and $\cS_t(\cC)$ respectively.
One can prove that the new invariant $J_t$ only depends on $J$;
\def\trefoil{\begin{xy}\end{xy}}
for example, $J_t(\text{a knot})=t\cdot J(\text{a knot})$
and $J_t(\text{a Hopf link})=(t^2-t)\cdot J(\text{a Hopf link})+t\cdot J(\text{two trivial knots})$.



\bibliographystyle{model1b-num-names}
\bibliography{2012}

\begin{thebibliography}{20}
\expandafter\ifx\csname natexlab\endcsname\relax\def\natexlab#1{#1}\fi
\providecommand{\bibinfo}[2]{#2}
\ifx\xfnm\relax \def\xfnm[#1]{\unskip,\space#1}\fi
\bibitem[{Benson(1991)}]{Benson:1991}
\bibinfo{author}{D.J. Benson}, \bibinfo{title}{Representations and cohomology.
  {I}}, volume~\bibinfo{volume}{30} of \textit{\bibinfo{series}{Cambridge
  Studies in Advanced Mathematics}}, \bibinfo{publisher}{Cambridge University
  Press}, \bibinfo{address}{Cambridge}, \bibinfo{year}{1991}.
  \bibinfo{note}{Basic representation theory of finite groups and associative
  algebras}.
\bibitem[{Bloss(2003)}]{Bloss:2003}
\bibinfo{author}{M.~Bloss}, \bibinfo{title}{{$G$}-colored partition algebras as
  centralizer algebras of wreath products}, \bibinfo{journal}{J. Algebra}
  \bibinfo{volume}{265} (\bibinfo{year}{2003}) \bibinfo{pages}{690--710}.
\bibitem[{Comes and Ostrik(2011)}]{ComesOstrik:2011}
\bibinfo{author}{J.~Comes}, \bibinfo{author}{V.~Ostrik}, \bibinfo{title}{On
  blocks of {D}eligne's category {$\underline{\rm Re}{\rm p}(S_t)$}},
  \bibinfo{journal}{Adv. Math.} \bibinfo{volume}{226} (\bibinfo{year}{2011})
  \bibinfo{pages}{1331--1377}.
\bibitem[{Day and Pastro(2008)}]{DayPastro:2008}
\bibinfo{author}{B.~Day}, \bibinfo{author}{C.~Pastro}, \bibinfo{title}{Note on
  {F}robenius monoidal functors}, \bibinfo{journal}{New York J. Math.}
  \bibinfo{volume}{14} (\bibinfo{year}{2008}) \bibinfo{pages}{733--742}.
\bibitem[{Deligne(2007)}]{Deligne:2007}
\bibinfo{author}{P.~Deligne}, \bibinfo{title}{La cat\'egorie des
  repr\'esentations du groupe sym\'etrique {$S_t$}, lorsque {$t$} n'est pas un
  entier naturel}, in: \bibinfo{booktitle}{Algebraic groups and homogeneous
  spaces}, Tata Inst. Fund. Res. Stud. Math., \bibinfo{publisher}{Tata Inst.
  Fund. Res.}, \bibinfo{address}{Mumbai}, \bibinfo{year}{2007}, pp.
  \bibinfo{pages}{209--273}.
\bibitem[{Etingof(2009)}]{Etingof:2009}
\bibinfo{author}{P.~Etingof}, \bibinfo{title}{Representation theory in complex
  rank}, \bibinfo{howpublished}{Conference talk at the Isaac Newton Institute
  for Mathematical Sciences}, \bibinfo{year}{2009}. \bibinfo{note}{Available at
  {\url{http://www.newton.ac.uk/programmes/ALT/seminars/032716301.html}}}.
\bibitem[{Freedman(2003)}]{Freedman:2003}
\bibinfo{author}{M.H. Freedman}, \bibinfo{title}{A magnetic model with a
  possible {C}hern-{S}imons phase}, \bibinfo{journal}{Comm. Math. Phys.}
  \bibinfo{volume}{234} (\bibinfo{year}{2003}) \bibinfo{pages}{129--183}.
  \bibinfo{note}{With an appendix by F. Goodman and H. Wenzl}.
\bibitem[{Fulton(1997)}]{Fulton:1997}
\bibinfo{author}{W.~Fulton}, \bibinfo{title}{Young tableaux},
  volume~\bibinfo{volume}{35} of \textit{\bibinfo{series}{London Mathematical
  Society Student Texts}}, \bibinfo{publisher}{Cambridge University Press},
  \bibinfo{address}{Cambridge}, \bibinfo{year}{1997}. \bibinfo{note}{With
  applications to representation theory and geometry}.
\bibitem[{Jones(1994)}]{Jones:1994}
\bibinfo{author}{V.F.R. Jones}, \bibinfo{title}{The {P}otts model and the
  symmetric group}, in: \bibinfo{booktitle}{Subfactors ({K}yuzeso, 1993)},
  \bibinfo{publisher}{World Sci. Publ., River Edge, NJ}, \bibinfo{year}{1994},
  pp. \bibinfo{pages}{259--267}.
\bibitem[{Knop(2006)}]{Knop:2006}
\bibinfo{author}{F.~Knop}, \bibinfo{title}{A construction of semisimple tensor
  categories}, \bibinfo{journal}{C. R. Math. Acad. Sci. Paris}
  \bibinfo{volume}{343} (\bibinfo{year}{2006}) \bibinfo{pages}{15--18}.
\bibitem[{Knop(2007)}]{Knop:2007}
\bibinfo{author}{F.~Knop}, \bibinfo{title}{Tensor envelopes of regular
  categories}, \bibinfo{journal}{Adv. Math.} \bibinfo{volume}{214}
  (\bibinfo{year}{2007}) \bibinfo{pages}{571--617}.
\bibitem[{Kosuda(2008)}]{Kosuda:2009}
\bibinfo{author}{M.~Kosuda}, \bibinfo{title}{Characterization for the modular
  party algebra}, \bibinfo{journal}{J. Knot Theory Ramifications}
  \bibinfo{volume}{17} (\bibinfo{year}{2008}) \bibinfo{pages}{939--960}.
\bibitem[{Leinster(2004)}]{Leinster:2004}
\bibinfo{author}{T.~Leinster}, \bibinfo{title}{Higher operads, higher
  categories}, volume \bibinfo{volume}{298} of \textit{\bibinfo{series}{London
  Mathematical Society Lecture Note Series}}, \bibinfo{publisher}{Cambridge
  University Press}, \bibinfo{address}{Cambridge}, \bibinfo{year}{2004}.
\bibitem[{Mac~Lane(1998)}]{Mac-Lane:1998}
\bibinfo{author}{S.~Mac~Lane}, \bibinfo{title}{Categories for the working
  mathematician}, volume~\bibinfo{volume}{5} of
  \textit{\bibinfo{series}{Graduate Texts in Mathematics}},
  \bibinfo{publisher}{Springer-Verlag}, \bibinfo{address}{New York},
  \bibinfo{edition}{second} edition, \bibinfo{year}{1998}.
\bibitem[{Martin(1994)}]{Martin:1994}
\bibinfo{author}{P.~Martin}, \bibinfo{title}{Temperley-{L}ieb algebras for
  nonplanar statistical mechanics---the partition algebra construction},
  \bibinfo{journal}{J. Knot Theory Ramifications} \bibinfo{volume}{3}
  (\bibinfo{year}{1994}) \bibinfo{pages}{51--82}.
\bibitem[{Mathew(2010)}]{Mathew:2010}
\bibinfo{author}{A.~Mathew}, \bibinfo{title}{Categories parametrized by schemes
  and representation theory in complex rank}, \bibinfo{year}{2010}.
  \bibinfo{note}{{\href{http://arxiv.org/abs/1006.1381}{arXiv:1006.1381}}}.
\bibitem[{McCurdy and Street(2010)}]{McCurdyStreet:2010}
\bibinfo{author}{M.~McCurdy}, \bibinfo{author}{R.~Street}, \bibinfo{title}{What
  separable {F}robenius monoidal functors preserve?}, \bibinfo{journal}{Cah.
  Topol. G\'eom. Diff\'er. Cat\'eg.} \bibinfo{volume}{51}
  (\bibinfo{year}{2010}) \bibinfo{pages}{29--50}.
\bibitem[{Selinger(2011)}]{Selinger:2011}
\bibinfo{author}{P.~Selinger}, \bibinfo{title}{A survey of graphical languages
  for monoidal categories}, in: \bibinfo{editor}{B.~Coecke} (Ed.),
  \bibinfo{booktitle}{New Structures for Physics}, volume \bibinfo{volume}{813}
  of \textit{\bibinfo{series}{Lecture Notes in Physics}},
  \bibinfo{publisher}{Springer Berlin / Heidelberg}, \bibinfo{year}{2011}, pp.
  \bibinfo{pages}{289--355}.
\bibitem[{Szlach{\'a}nyi(2001)}]{Szlachanyi:2001}
\bibinfo{author}{K.~Szlach{\'a}nyi}, \bibinfo{title}{Finite quantum groupoids
  and inclusions of finite type}, in: \bibinfo{booktitle}{Mathematical physics
  in mathematics and physics ({S}iena, 2000)}, volume~\bibinfo{volume}{30} of
  \textit{\bibinfo{series}{Fields Inst. Commun.}}, \bibinfo{publisher}{Amer.
  Math. Soc.}, \bibinfo{address}{Providence, RI}, \bibinfo{year}{2001}, pp.
  \bibinfo{pages}{393--407}.
\bibitem[{Szlach{\'a}nyi(2005)}]{Szlachanyi:2005}
\bibinfo{author}{K.~Szlach{\'a}nyi}, \bibinfo{title}{Adjointable monoidal
  functors and quantum groupoids}, in: \bibinfo{booktitle}{Hopf algebras in
  noncommutative geometry and physics}, volume \bibinfo{volume}{239} of
  \textit{\bibinfo{series}{Lecture Notes in Pure and Appl. Math.}},
  \bibinfo{publisher}{Dekker}, \bibinfo{address}{New York},
  \bibinfo{year}{2005}, pp. \bibinfo{pages}{291--307}.

\end{thebibliography}







\end{document}